\definecolor{darkgreen}{rgb}{0,0.45,0}
\crefname{equation}{}{}
\tikzset{tick/.style={postaction={decorate,decoration={markings,
mark=at position 0.5 with {\draw[-] (0,.4ex) -- (0,-.4ex);}}}}}
\tikzset{tickx/.style={postaction={decorate,decoration={markings,mark=at position 0.5 with
{\fill circle [radius=.28ex];}}}}}
\newcommand{\tickar}
{\begin{tikzcd}[baseline=-0.5ex,cramped,sep=small,ampersand replacement=\&]{}\ar[r,tick]\&{}\end{tikzcd}}
\newcommand{\spn}[5]{\SelectTips{eu}{10}\xymatrix@C=.25in{#1 & #2\ar[l]|-{#4}\ar[r]|-{#5} & #3}}
\newcommand{\cspn}[5]{\SelectTips{eu}{10}\xymatrix@C=.25in{#1\ar[r]|-{#4} & #2 & #3\ar[l]|-{#5}}}
\newtheorem{proposition}{Proposition}[subsection]
\newtheorem{lemma}[proposition]{Lemma}
\newtheorem{corollary}[proposition]{Corollary}
\newtheorem{theorem}[proposition]{Theorem}
\theoremstyle{definition}
\newtheorem{definition}[proposition]{Definition}
\newtheorem{examples}[proposition]{Examples}
\theoremstyle{remark}
\newtheorem{remark}[proposition]{Remark}
\newcommand{\mlt}{m} % V-category composition/multiplication
\newcommand{\uni}{j} % V-category identities/unit
\newcommand{\comlt}{d} % V-opcategory (global) cocomposition
\newcommand{\couni}{\epsilon} % V-opcategory (global) coidenitities
\newcommand{\lcomlt}{\delta} % Comon(V)-graph local comultiplication
\newcommand{\lcouni}{\varepsilon} % Comon(V)-graph local counit - 
\newcommand{\lmlt}{\mu}% Hopf opcategory local multiplication
\newcommand{\luni}{\eta}% Hopf opcategory local unit
\newcommand{\diag}{\Delta}
\newcommand{\Comon}{\sf Comon}
\newcommand{\id}{{\sf id}}
\newcommand{\ps}{\ca}
\newcommand{\PsMon}{\sf{PsMon}}
\newcommand{\Bim}{\sf{BMod}}
\newcommand{\PsComon}{\sf{PsComon}}
\newcommand{\Hom}{{\sf Hom}}
\newcommand{\Hopf}{{\sf Hopf}}
\newcommand{\sHopf}{{\sf sHopf}}
\newcommand{\Frob}{{\sf Frob}}
\newcommand{\Mod}{{\sf Mod}}
\newcommand{\LaxMod}{{\sf LaxMod}}
\newcommand{\OplMod}{{\sf OplMod}}
\newcommand{\Grph}{\sf Grph}
\newcommand{\Span}{{\sf Span}}
\newcommand{\Mat}{{\sf Mat}}
\newcommand{\Mon}{{\sf Mon}}
\newcommand{\Bimon}{{\sf Bimon}}
\newcommand{\VCat}{\Vv\textrm{-}\sf{Cat}}
\newcommand{\VopCat}{\Vv\textrm{-}\sf{opCat}}
\newcommand{\op}{\textrm{op}}
\newcommand{\OplBimon}{\sf OplBimon}
\def\ot{\otimes}
\newcommand{\ca}{\mathcal}
\newcommand{\caa}{\mathbb}
\newcommand{\Cc}{\mathcal{C}}
\newcommand{\Gg}{\mathcal{G}}
\newcommand{\Kk}{\mathcal{K}}
\newcommand{\Vv}{\mathcal{V}}
\def\text#1{{\rm {\rm #1}}}
\newcommand{\Cat}{{\sf Cat}}
\def\ul{\underline}
\newcommand{\Set}{\mathbf{Set}}
\def\lim{{\rm lim\,}}
\def\lax{{\sf lax}}
\def\opl{{\sf opl}}
\def\oplax{{\sf opl}}
\def\pseudo{{\sf ps}}
\def\joost{\color{blue}}
\definecolor{purple(x11)}{rgb}{0.5, 0.0, 0.5}
\def\Set{{\sf Set}}
\def\ot{\otimes}
\newcommand{\HopfCat}[1]{\Hopf\mhyphen #1 \mhyphen\Cat}
\newcommand{\sHopfCat}[1]{\sHopf\mhyphen #1 \mhyphen\Cat}
\newcommand{\FrobCat}[1]{\Frob\mhyphen #1 \mhyphen\Cat}
\newcommand{\ucorner}{\mathbin{\rotatebox[origin=c]{-135}{$\ulcorner$}}}
\newcommand{\upback}{\mathbin{\rotatebox[origin=c]{135}{$\ulcorner$}}}
\newcommand{\dpback}{\mathbin{\rotatebox[origin=c]{315}{$\ulcorner$}}}
\def\*C{{}^*\hspace*{-1pt}{\Cc}}
\def\text#1{{\rm {\rm #1}}}
\def\ul{\underline}
\newcommand{\dagarr}[3]{\ar@{<..}@<-2pt>[#1]_-{#3} \ar@<2pt>[#1]^-{#2}}
\mathchardef\mhyphen="2D % Define a "math hyphen"
\newcommand{\cd}[2][]{\vcenter{\hbox{\xymatrix#1@!C{#2}}}}
\newcommand{\Twocell}[4]{\arrow[Rightarrow,from=#1,to=#2,#4,shorten >=#3,shorten <=#3]}
\def\atpd{s}
\def\braid{\sigma}
\newcommand{\mapsfrom}{\mathrel{\reflectbox{\ensuremath{\mapsto}}}}
\newcommand{\coev}{{\rm coev}}
\newcommand{\Lan}{\mathrm{Lan}}
\newcommand{\ev}{{\rm ev}}
\let\oldtikzcd\tikzcd
\def\tikzcd{{\ifnum0=`}\fi\oldtikzcd}
\let\oldendtikzcd\endtikzcd
\def\endtikzcd{\oldendtikzcd\ifnum0=`{\fi}}
\title
[Oplax {H}opf algebras]
{Oplax {H}opf algebras}
\author[Buckley]{Mitchell Buckley}
\address{Data61, CSIRO, Sydney NSW 1466, Australia}
\email{mitchell.buckley@data61.csiro.au}
\author[Fieremans]{Timmy Fieremans}
\address{Vakgroep Wiskunde, Faculteit Ingenieurswetenschappen, Vrije Universiteit Brussel, Pleinlaan 2, B-1050 Brussel, Belgium}
\email{tfierema@vub.ac.be} 
\author[Vasilakopoulou]{Christina Vasilakopoulou} 
\address{Departement of Mathematics, University of Patras, 26504 Greece}
\email{cvasilak@math.upatras.gr}
\author[Vercruysse]{Joost Vercruysse}
\address{D\'epartement de Math\'ematiques, Facult\'e des sciences, Universit\'e Libre de Bruxelles, Boulevard du
Triomphe, B-1050 Bruxelles, Belgium}
\email{jvercruy@ulb.ac.be}
\begin{document}

\begin{abstract}
We introduce the notion of an oplax Hopf monoid in any braided monoidal
bicategory, generalizing that of a Hopf monoid in a braided monoidal category in an appropriate way.
We show that Hopf $\Vv$-categories introduced in~\cite{BCV}
are a particular type of oplax Hopf monoids in the monoidal bicategory $\Span|\Vv$ described in~\cite{Gabipolyads}.
Finally, we introduce Frobenius $\Vv$-categories as the Frobenius objects in the same monoidal bicategory.
\end{abstract}

\maketitle

\setcounter{tocdepth}{1}
\tableofcontents

\section{Introduction}

Over the last decades, a growing number of variations on the notion of Hopf algebra have surfaced, and 
this has led to increasing investigations on the intriguing question of how these various notions are related and can be unified in a single 
(categorical) framework.

{\em Weak Hopf algebras} \cite{BNS:whaI} were introduced to describe extensions of Von Neumann algebras as crossed products (by an action of a weak Hopf $C^*$-algebra). A weak Hopf algebra is an algebra that also has a coalgebra structure, but their compatibility as well as the antipode properties are weakened compared to those of usual Hopf algebras.
Weak Hopf algebras were one of the inspiring examples to define
{\em Hopf algebroids} \cite{BS:Hoid}, which in turn led to Hopf
monads \cite{BLV:Had}. Just as Hopf monads live in the monoidal
2-category of categories, functors and natural transformations,
Hopf-type objects can be defined in any monoidal bicategory, and
they have been studied in this way, see eg.
\cite{Monoidalbicatshopfalgebroids,CLS,Street2012,BohmLack}. In the latter, all previous mentioned Hopf-type objects (as well as a few more) 
are interpreted as particular opmonoidal monads in suitable monoidal bicategories, and different classical characterizations of Hopf algebras are 
recovered at an extremely high level of generality. 

In a different direction, {\em multiplier Hopf algebras} \cite{VD:multi} provide a generalization of Hopf algebras to the non-unital setting. Recently, the theory of weak and multiplier Hopf algebras have been merged \cite{VDW:wMHA,BGTLC} and brought to the categorical setting \cite{Bohm2017}.

An intermediate notion, called {\em Hopf category}, was introduced recently in \cite{BCV}, as a linearized version of groupoids.
Just as a category can be viewed as a `monoid with many objects',
Hopf $\Vv$-categories are a many-object generalization (or a {\em categorification}) of Hopf algebras. They are defined as categories enriched over comonoids (in a braided monoidal category $\Vv$), that admit a suitable notion of antipode. 
Moreover, a Hopf category can be `packed' by taking the coproduct of its hom-objects, leading to examples of weak (multiplier) Hopf algebras.
In \cite{Gabipolyads}, B\"ohm
showed that Hopf $\Vv$-categories fit in the framework of \cite{BohmLack}, namely they can be viewed as
a particular type of opmonoidal monad in a suitably constructed monoidal bicategory denoted $\Span|\Vv$, 
where a braided monoidal category $\Vv$ is viewed as a $1$-object monoidal bicategory.

For $H$ either a classical Hopf algebra, a weak Hopf algebra, a Hopf algebroid or a Hopf category,
there are two fundamental points for its bicategorical interpretation as described above: $H$ is always a $1$-cell in the considered bicategory, and
moreover its underlying monoid and comonoid structures 
make use of different monoidal products (that together fit into a {\em duoidal} category structure).

In this work, we aim to provide a next step in this process of settling the notion of Hopf-type objects in higher categories,
moving from monoidal bicategories i.e.\ a tricategory with a single $0$-cell, to braided monoidal bicategories \cite{CoherenceBrMonBicat} i.e. a 
tetracategory with a single $0$-cell and a single $1$-cell, see  \cite{ToddTrimbleNotes,montricat}. 
Within such a setting, we introduce the notion of an {\em oplax bimonoid} which consists of a $0$-cell endowed with the structure of a pseudomonoid
and pseudocomonoid, along with oplax compatibility constraints between them. 
In the spirit of Tannaka-Krein duality, we study the relation between monoidal structures on the category of lax pseudomodules over a pseudomonoid and oplax bimonoid structures on this pseudomonoid. 
%In principle, there is no reason why one should require the underlying  monoid and comonoid structures of an oplax bimonoid to be strict. Indeed, one could make sense of a notion of an {\em oplax pseudobimonoid} in a braided monoidal bicategory. However, because all the examples we aim to describe have strict underlying monoid and comonoid structures, we will not not consider this more general setting in order to not obscure our results by extra technicalities.

Particular attention should be paid to the correct notion of an antipode for an oplax Hopf algebra, which is more elaborate than what one might expect 
at first sight. It is no longer a convolution inverse of the identity, but an \emph{oplax inverse}, a notion inspired by firm Morita contexts. 
However, we still retain the uniqueness of the antipode if it exists, and we discuss the relation between the existence of an antipode and the 
bijectivity of fusion morphism for an oplax bimonoid. 

A first motivating example for our new notion are groupoids which we show are oplax Hopf monoids in the bicategory of spans. More generally, Hopf categories are a well-described class of oplax Hopf monoids in a variation of B\"ohm monoidal bicategory $\Span|\Vv$, where now the monoidal category $\Vv$ is considered as a monoidal 2-category with trivial $2$-cells. We show that this monoidal bicategory has some interesting features, in particular under some mild conditions the forgetful functor to $\Span$ is a $2$-opfibration.

The conceptual advantage of our approach is that the underlying monoid and co\-monoid structure of an oplax bimonoid or Hopf monoid live over the same 
monoidal product, which places our concept closer to the self-dual notions of classical Hopf algebras and weak Hopf algebras, restoring
this feature for Hopf categories which was lost in their interpretation as Hopf monads in \cite{Gabipolyads}. 
In fact, the latter description and the current one are related
via a {\em dimension shift} in the following sense:
a Hopf category in \cite{Gabipolyads} is 
a $1$-cell in a certain monoidal bicategory, whereas in our setting
it is a $0$-cell in a (different) symmetric monoidal bicategory. 
%Oplax Hopf monoids can also serve as a toy model for even more advanced notions of Hopf objects, whose underlying monoids and comonoids are no longer  required to be strict.

We finish the paper by describing the notion of a Frobenius pseudomonoid in $\Span|\Vv$, leading to the notion of {\em Frobenius 
$\Vv$-category}.
Although both Hopf and Frobenius categories are viewed as pseudomonoids and pseudocomonoids in the same monoidal bicategory (albeit with different 
compatibility conditions), by taking a closer look it becomes evident that the algebra and coalgebra structures for a Hopf category are of different
`type', whereas for a Frobenius category both algebra and coalgebra structure are
of the same `type'. In the subsequent \cite{paper1b} we show that a Hopf $\Vv$-category that is locally rigid admits the structure of a Frobenius 
$\Vv$-category, providing a generalization of the well-known Larson Sweedler theorem.

Our paper is structured as follows. In \cref{sec:Preliminaries}, we recall some known
results and fix notation, especially for monoidal bicategories and pseudo(co)monoids.
In \cref{sec:oplaxHopf} we introduce oplax bimonoids and oplax Hopf monoids, and
prove some first results for them. \cref{sec:X2structures} consists of some elementary
observations about the set $X^2$ and more generally groupoids viewed as objects in $\Span$, where they give
examples of oplax Hopf monoids and Frobenius monoids.
In \cref{sec:SpanV}, we extensively describe the symmetric monoidal bicategory $\Span|\Vv$ and show that the forgetful functor to $\Span$ is a 
$2$-opfibration.
In \cref{sec:FrobeniusHopfVCatsSpanV} we show that Hopf
categories are oplax Hopf monoids in $\Span|\Vv$, as well as intermediate structures like enriched (op)categories and morphisms 
between them can all be realized in that setting. Finally, in 
\cref{FrobeniusVcats} we study the Frobenius monoids in $\Span|\Vv$, which we term Frobenius $\Vv$-categories.
In the appendix we list the various coherence conditions for the objects and morphisms we introduced.
%In \cref{se:linear} we prove the Larson-Sweedler theorem for Hopf categories.

\section{Preliminaries}\label{sec:Preliminaries}

In this section, we provide some background material and fix notation and terminology for what
follows.

\subsection{Bimonoids, Hopf monoids and Frobenius monoids}\label{subsec:bimonoids}

We assume familiarity with the theory of (braided) monoidal categories
$(\Vv,\otimes,I,\alpha,\lambda,\braid)$; see e.g.\ \cite{McLane} or \cite{BraidedTensorCats}. Since any monoidal category is monoidally equivalent to a strict monoidal category, we will assume from now on without loss of generality that the associativity constraint $\alpha$ and the unitality constraint $\lambda$ are indentities.
Recall that a monoid (algebra) in $\Vv$ is a triple $(A,\mlt\colon A\otimes A\to A,\uni\colon I\to A)$ satisfying the usual associativity and unitality constraints. Dually, a comonoid (coalgebra) in $\Vv$ will be denoted as $(C,\comlt,\couni)$. We denote the categories of monoids and comonoids respectively by $\Mon(\Vv)$ and $\Comon(\Vv)$ $\left(\cong\Mon(\Vv^\op)^\op\right)$.
When $\Vv$ is braided, both these categories inherit a
monoidal structure from $\Vv$.

A bimonoid (bialgebra) in a braided monoidal category is an object $M$ with a monoid structure $(M,\mlt,\uni)$ and a comonoid structure $(M,\lcomlt,\lcouni)$ that are compatible, in the sense that a bimonoid is equivalently both a comonoid in $\Mon(\Vv)$ and a monoid in $\Comon(\Vv)$. Diagrammatically this is expressed by
\begin{equation}\label{eq:bimon}
\begin{tikzcd}[row sep=.15in,column sep=.25in]
M\otimes M \ar[d,"\lcomlt\otimes\lcomlt"'] \ar[r,"\mlt"] & M \ar[dd,"\lcomlt"] \\ 
M\otimes M\otimes M\otimes M \ar[d,"1\otimes\braid\otimes 1"'] & \\
M\otimes M\otimes M\otimes M \ar[r,"\mlt\otimes\mlt"'] & M\otimes M
\end{tikzcd}
\begin{tikzcd}
I \ar[dr,"\uni\otimes\uni"']\ar[r,"\uni"] & M\ar[d,"\lcomlt"] \\
& M\otimes M
\end{tikzcd}
\begin{tikzcd}
M\otimes M \ar[d,"{\lcouni \otimes \lcouni}"'] \ar[r,"\mlt"] & M\ar[dl,"\lcouni"] \\
I\otimes I &
\end{tikzcd}
\begin{tikzcd}
I \ar[dr,"1"'] \ar[r,"\uni"] & M\ar[d,"\lcouni"] \\
& I
\end{tikzcd}
\end{equation}
Bimonoids form a category $\Bimon(\Vv)$ whose morphisms
are simultaneously monoid and comonoid morphisms. 

Moreover, a Hopf monoid (Hopf algebra) is a bimonoid $(H,\mlt,\uni,\comlt,\couni)$ equipped with an antipode $\atpd\colon H\to H$
satisfying $\mlt\circ(s\otimes1_H)\circ\comlt=\uni\circ\couni=\mlt\circ(1_H\otimes s)\circ\comlt$.
Since $H$ has both a monoid and comonoid structure,
we can define the convolution product $f\odot g$ of any two endomorphisms $f,g \in \ca{V}(H,H)$
as the composite $\mlt\circ(f\otimes g)\circ\comlt$;
this is an associative binary operation on $\ca{V}(H,H)$ with $I^\odot := \uni\circ\couni$ as its unit. 
Using this terminology, the defining property of an antipode is that it is inverse to $1_H$ under the convolution product.
Hence, the antipode is unique when it exists, and bimonoid morphisms between Hopf monoids
can be seen to automatically commute with the antipode.
Therefore Hopf monoids form a full subcategory $\Hopf(\Vv)$ of $\Bimon(\Vv)$.
Equivalently to the existence of an antipode on a bimonoid $H$, a Hopf monoid can be characterised via the invertibility
of the canonical maps $(1_H\ot\mlt)\circ(\comlt\ot1_H)$ or $(\mlt\ot1_H)\circ(1_H\ot\comlt)$;
these are sometimes referred to as \emph{fusion morphisms}, see e.g.\ \cite{Street1998}.

Finally, a Frobenius monoid $A$ also has both a monoid and comonoid structure, but in this case they are compatible via the Frobenius laws
%\begin{equation}\label{Frobcond}
%\begin{tikzcd}[row sep=.1in,column sep=.2in]
%A\ot A\ar[rr,"\comlt\ot1"]\ar[dd,"1\ot\comlt"']\ar[dr,"\mlt"description] &&
%A\ot A\ot A\ar[dd,"1\ot\mlt"] \\
%& A\ar[dr,"\comlt"description] & \\
%A\ot A\ot A\ar[rr,"\mlt\ot1"'] && A\ot A
%\end{tikzcd}
%\end{equation}
$(1_A\otimes \mlt)\circ(\comlt\otimes1_A)=\comlt\circ \mlt=
(\mlt\otimes1_A)\circ(1_A\otimes\comlt)$.
\begin{comment} %X also probably paper1b?
Equivalently, a monoid $M$ is Frobenius if and only if it
is rigid, and $M\cong M^*$ as $M$-modules: $M$ with its regular $M$-action, and $M^*$ with
\begin{displaymath}
M^*\ot M\xrightarrow{1\ot\coev}M^*\ot M\ot M\ot M^*\xrightarrow{1\ot\mlt\ot1}M^*\ot M\ot M^*
\xrightarrow{\ev\ot1}M^*
\end{displaymath}
Another equivalent characterisation states that a monoid $M$ is Frobenius if and only if the
forgetful functor $U\colon \Mod_M\to \Vv$ has $-\ot M$ as a right adjoint
(note that for any monoid, it is always the case that $(-\ot M)\dashv U$).
\end{comment}
Morphisms between Frobenius monoids are simultaneously monoid and comonoid morphisms. We denote this category
by $\Frob(\ca{V})$.
For more details about Frobenius algebras,
see e.g.\ \cite{Frobeniusmonads}. % and \cite{Caenepeel2002}.

\subsection{Enriched categories and opcategories}\label{enrichedstuff}

Recall that for a monoidal category $\Vv$, a $\Vv$-graph $A$ consists of a set of objects $X$ together with a 
family of hom-objects $\{A_{x,y}\}_{x,y\in X}$ in $\Vv$;
we will use the notation $A_{x,y}$ rather than the more common $A(x,y)$.
Morphisms of $\Vv$-graphs are functions $f$ between the sets of objects, together with a family of arrows $F_{xy}\colon A_{x,y}\to B_{fx,fy}$ in $\Vv$.
Together these form a category $\Vv$-$\Grph$.

A $\Vv$-\emph{category}
is a $\Vv$-\emph{graph} $\{A_{x,y}\}_{x,y\in X}$ together with composition (or multiplication) laws
$\mlt_{xyz}\colon A_{x,y}\otimes A_{y,z}\to A_{x,z}$ and identities $\uni_x\colon I\to A_{x,x}$
that satisfy the usual associativity and unity conditions. 
A $\Vv$-functor is a $\Vv$-graph morphism
that respects composition and units, and together these form a category $\Vv$-$\Cat$.
If $\Vv$ is braided monoidal, every $\Vv$-category $A$ has an \emph{opposite} $\Vv$-category $A^\op$ with the same objects and
hom-objects $A^\op_{x,y}\coloneqq A_{y,x}$, where composition arises from the one in $A$ composed with the braiding. More on the subject of enriched categories can be found in ~\cite{Kelly}. 

A $\ca{V}$-\emph{opcategory} $C$ is a category enriched in $\ca{V}^\op$~\cite[\S 9]{Monoidalbicatshopfalgebroids}.
Explicitly, there exist cocomposition and coidentity 
families of arrows in $\Vv$
\begin{equation}\label{Vopcat}
\comlt_{xyz}\colon C_{x,z}\to C_{x,y}\otimes C_{y,z},\quad \couni_{x}\colon C_{x,x}\to I
\end{equation}
satisfying coassociativity and counity axioms.
A $\Vv$-\emph{opfunctor} is a $\Vv^\op$-functor, so there is a category $\VopCat=\Vv^\op\textrm{-}\Cat$.

% \begin{equation}\label{Vopcat1}
% \begin{tikzcd}[column sep=.6in, row sep=.5in]
% C_{x,w}\ar[r,"\comlt_{xyw}"]\ar[d,"\comlt_{xzw}"'] & C_{x,y}\otimes C_{y,w}\ar[d,"1\otimes\comlt_{yzw}"] \\
% C_{x,z}\otimes C_{z,w}\ar[r,"\comlt_{xyz}\otimes1"'] & C_{x,y}\otimes C_{y,z}\otimes C_{z,w}
% \end{tikzcd}\;
% \begin{tikzcd}[column sep=.4in, row sep=.5in]
% C_{x,y}\otimes C_{y,y}\ar[d,"1\otimes\couni_{y}"'] & C_{x,y}
% \ar[l,"\comlt_{xyy}"']\ar[r,"\comlt_{xxy}"]\ar[dl,"\sim"]\ar[dr,"\sim"'] &
% C_{x,x}\otimes C_{x,y}\ar[d,"\couni_{x}\otimes1"] \\
% C_{x,y}\otimes I && I\otimes C_{x,y}
% \end{tikzcd}
% \end{equation}

\subsection{Monoidal bicategories and pseudo(co)monoids}\label{sec:pseudomon}

Recall that a \emph{monoidal bicategory} $\ca{K}$ is a one-object tricategory~\cite[Def.~2.6]{CoherenceTricats}, namely a bicategory 
equipped with a pseudofunctor $\otimes\colon\Kk\times\Kk\to\Kk$ and a unit object $I\colon\mathbf{1}\to\Kk$ which are associative and unital up to 
coherent equivalence; for the explicit description see~\cite{Carmody,Pries}. A \emph{monoidal 2-category} is one whose underlying bicategory is 
really a 2-category. Notice that this is different from a \emph{2-monoidal 2-category} whose tensor product is a strict 2-functor, namely where 
$(f\ot g)\circ(h\ot k)=(f\circ h)\ot (g\circ k)$ rather than being coherently isomorphic as in the pseudo case.
%[keep] these are monoidally biequivalent to a one-object 3-category, namely a strict 2-monoidal 2-category!
A \emph{weak monoidal pseudofunctor} $\ps{F}\colon\ca{K}\to\ca{L}$ between monoidal bicategories preserves the monoidal structure via pseudonatural 
maps $\ps{F}(A)\ot\ps{F}(B)\to\ps{F}(A\ot B)$ and $\ps{F}(1_A)\to 1_{\ps{F}A}$ along with associativity and unity invertible modifications subject to 
coherence axioms, see \cite{Monoidalbicatshopfalgebroids}.

%It is well-know that any bicategory is biequivalent to a 2-category; its one-object case provides coherence for monoidal categories. 
It is well-known that any monoidal bicategory is monoidally biequivalent to a 
monoidal $2$-category, and more precisely a {\em Gray monoid}: this is the one-object case of the coherence theorem for tricategories \cite{Coherence3dim,CoherenceTricats}.
%[keep]Gurski adjusted the definition of a tricategory to make it 'fully' algebraic. GPS included data having certain properties but not the necessary data to check those
A Gray monoid \cite{BaezHD,Monoidalbicatshopfalgebroids}, also called a \emph{semistrict monoidal $2$-category},
%[keep] is a one-object Gray-category, where Gray is the symmetric monoidal closed 1-category of 2-categories, 2-functors with the Gray tensor product
is a monoidal $2$-category with a tensor product which is strictly associative and unital 
on objects, but still a pseudofunctor and not a strict $2$-functor. In particular, for given $1$-cells $f\colon A\to B$ and $g\colon C\to D$ there exists a suitable invertible $2$-cell $c_{f,g}:(f\ot 1)\circ(1\ot g)\to (1\ot g)\circ (f\ot 1)$, natural in $f$ and $g$ and satisfying certain coherence conditions. By convention, we denote the $1$-cell $(f\ot 1)\circ(1\ot g)$ by $f\ot g$.
%[keep] in fact, the real definition has two separate functors: tensor-on-the-left and on-the-right, an using that convention we make one pseudofunctor out of them
In what follows, whenever we consider a monoidal bicategory or monoidal 2-category, we will always suppose that it is a Gray monoid, where we suppress the morphisms $c_{f,g}$ and their inverses or (vertical and horizontal) compositions.   

A monoidal 2-category is \emph{braided} when it comes equipped with a pseudonatural equivalence with components $\braid_{A,B}\colon A\otimes B\to 
B\otimes A$, and two invertible modifications relating the tensors of three objects, subject to axioms that are detailed in references such as 
\cite[Def.~6]{BaezHD}, \cite[Def.~12]{Monoidalbicatshopfalgebroids} and finalised in \cite[Def.~2.2]{CransBrMon2cat}; it is \emph{sylleptic} when there exists an invertible modification $v\colon\sigma\circ\sigma\Rrightarrow1$ and 
\emph{symmetric} when $\braid_{A,B}\circ v_{A,B}=v_{B,A}\circ\braid_{B,A}$, subject to appropriate axioms. On the other hand, its weakened version of a braided monoidal bicategory as described in \cite[\S~2.4]{CoherenceBrMonBicat} is computationally more challenging; due to a coherence theorem though, every braided monoidal bicategory is braided monoidally biequivalent to a braided monoidal 2-category, as shown in \cite[Thm~ 27]{CoherenceBrMonBicat}.
In more detail, there is an induced braided structure on the biequivalent Gray monoid, which is then adjusted in order to constitute an actual braided monoidal 2-categorical structure as defined in \cite[Def.~2.2]{CransBrMon2cat}.
% and \cite[4238]{CoherenceBrMonBicat}
%[keep] those are the same according to Nick's introduction.
Furthermore, the respective coherence result for symmetric monoidal bicategories can be found in \cite{GurskiOsorno}. 
Therefore in what follows, we often work without loss of generality in the simpler context of a braided or symmetric monoidal 2-category (Gray 
monoid) $\ca{K}$.

A \emph{pseudomonoid} \cite[\S 3]{Monoidalbicatshopfalgebroids} (or \emph{monoidale}) $(A,\mlt,\uni,\alpha,\ell,r)$
in $\ca{K}$ is an object $A$ with multiplication $\mlt: A\ot A\to A$, 
unit $\uni: A \to I$, and invertible 2-cells
\begin{equation}\label{alphalambdarho}
\begin{tikzcd}
A\otimes A\otimes A\ar[r,"1\otimes\mlt"]\ar[d,"\mlt\otimes 1"']\ar[dr,phantom,"\scriptstyle\stackrel{\alpha}{\cong}"description]
& A\otimes A\ar[d,"\mlt"]\ar[d,"\mlt"]
& A\ar[r,"1\otimes\uni"]\ar[dr] & A\otimes A\ar[d,"\mlt","\stackrel{\ell}{\cong}\quad"'near start,
"\quad\;\stackrel{r}{\cong}"near start] &
A\ar[l,"\uni\otimes 1"']\ar[dl] \\
A\otimes A\ar[r,"\mlt"'] & A && A &
\end{tikzcd}
\end{equation}
satisfying appropriate coherence conditions found in \cref{sec:oplaxmaps}.
%In particular, a \emph{strict monoid} in $\ca{K}$ is a pseudomonoid where $\alpha,\lambda,\rho$ are identity 2-cells.
%X So a strict monoid would have to have its c_{m,m}=id rather than isomorphism. Decided to leave this idea because our (co)monoids
%in Span are really up to iso. 
For example, a pseudomonoid in the cartesian monoidal 2-category $(\Cat,\times,\mathbf{1})$
is a monoidal category.
%and a strict monoid is a strict monoidal category. %X That used to work because Cat is a 2-monoidal 2-category!
Dually, we have the notions of a \emph{pseudocomonoid} 
%and \emph{strict comonoid} 
in a monoidal 2-category, denoted by $(C,\lcomlt,\lcouni,\beta,s,t)$.
%Clearly, a strict (co)monoid in a monoidal 2-category is an ordinary (co)monoid in the underlying monoidal 1-category.
%Formally, the above is not correct! Perhaps in the 1-category under iso class, but even there all pseudomonoids squish down to monoids anyway.

An \emph{oplax} (or \emph{opmonoidal}) \emph{morphism} 
between pseudomonoids is a 1-cell $f\colon A\to B$ equipped with 2-cells
\begin{equation}\label{oplax2cells}
\begin{tikzcd}[row sep=.5in, column sep=.5in]
A\otimes A\ar[dr,phantom,"\Downarrow{\scriptstyle\phi}"description]\ar[r,"\mlt"]\ar[d,"f\otimes f"'] & |[alias=doma]| A\ar[d,"f"] \\
|[alias=coda]|B\otimes B\ar[r,"\mlt"'] & B
%\arrow[Rightarrow,from=doma,to=coda,"\phi"',shorten >=.3in,shorten <=.3in]
\end{tikzcd}\qquad
\begin{tikzcd}[row sep=.5in, column sep=.5in]
I\ar[dr,phantom,bend left=15,"\Downarrow{\scriptstyle\phi_0}"description]\ar[r,"\uni"]\ar[dr,bend right,"\uni"'{name=coda,below}] & |[alias=doma]| A\ar[d,"f"] \\
& B
%\arrow[Rightarrow,from=doma,to=coda,"\phi_0"',shorten <=.15in]
\end{tikzcd}
\end{equation}
such that compatibility conditions for units and multiplications hold, see \cref{sec:oplaxmaps}.
A \emph{lax morphism} of pseudomonoids is defined similarly, with the 2-cells $\phi,\phi_0$
pointing in the opposite direction and the conditions adjusted accordingly. 
For example, (op)lax morphisms between pseudomonoids in 
$\Cat$ are precisely (op)lax monoidal functors. If $\ca{K}$ is a monoidal 2-category, then pseudomonoids together with (op)lax morphisms form 
categories $\PsMon_\opl(\ca{K})$ and $\PsMon_\lax(\ca{K})$.
%and in particular, we can restrict to the full subcategory $\Mon_\opl(\ca{K})$ of strict  monoids and oplax 1-cells.
Dually, we can talk of oplax morphisms between pseudocomonoids; if $\Kk^\op$ is the monoidal 2-category with reversed 
1-cells, it is the case that $\PsMon_\opl(\Kk^\op)\cong\PsComon_\opl(\Kk)^\op$.

In fact, for any monoidal bicategory $\ca{K}$, $\PsMon_\opl(\ca{K})$ is also a bicategory: if $(f,\phi,\phi_0)$ and $(g,\psi,\psi_0)$ are two oplax morphisms between pseudomonoids $A$ and $B$, a 2-cell between them is some $\alpha\colon f\Rightarrow g$ which is compatible with the oplax structure 2-cells, see \cref{sec:oplaxmaps}.
The similarly defined bicategory $\PsMon_\lax(\ca{K})$ was denoted by $\Mon(\ca{K})$ in \cite{CLS}.

Furthermore, when $\Kk$ is a braided monoidal bicategory with braiding $\braid$, the bicategory
$\PsMon_\opl(\ca{K})$ obtains a monoidal structure itself: the multiplication and unit on the tensor product of two pseudomonoids $A$ and $B$ is
%if $(A,\mlt_{A},\uni_A)$ and $(B,\mlt_{B},\uni_B)$ are two pseudomonoids, the multiplication and unit on their tensor product are defined as
\begin{gather}\label{monoidalpseudomonoids}
 A \ot B \ot A \ot B \xrightarrow{ 1\ot \braid \ot1} A \ot A \ot B \ot B \xrightarrow{\mlt \ot \mlt} A \ot B \\
 I\cong I\ot I\xrightarrow{\uni\ot\uni}A\ot B \nonumber
\end{gather}
and the coherence data can be readily constructed also. This can be seen as a result of the fact that the braided structure of a monoidal bicategory endows the pseudofunctor $\otimes$ with a monoidal structure, as discussed in \cite[117]{Monoidalbicatshopfalgebroids}, hence it preserves pseudomonoids by \cite[Prop.~5]{Monoidalbicatshopfalgebroids}.

Finally, a \emph{Frobenius pseudomonoid} inside a monoidal bicategory is an object with a pseudomonoid and pseudocomonoid
structure $(A,\mlt,\uni,\comlt,\couni)$ together with isomorphisms
\begin{equation}\label{pseudoFrobcond}
\begin{tikzcd}[row sep=.15in,column sep=.2in]
A\ot A\ar[rr,"\comlt\ot1"]\ar[dd,"1\ot\comlt"']\ar[dr,"\mlt"description] &&
A\ot A\ot A\ar[dd,"1\ot\mlt"] \\
 & A\ar[dr,"\comlt"description]\ar[ru,phantom,"\scriptstyle\stackrel{\phi}{\cong}"]\ar[ld,phantom,"\scriptstyle\stackrel{\psi}{\cong}"] & \\
A\ot A\ot A\ar[rr,"\mlt\ot1"'] && A\ot A
\end{tikzcd}
\end{equation}
satisfying certain coherence conditions, see \cref{sec:Frobpseudo}. For a detailed study of such objects and equivalent formulations see
\cite{Frobeniusmonads,LaudaFrobAlgs,Dualsinvert}.
If $f\colon A\to B$ is a 1-cell between Frobenius pseudomonoids,
we may choose any of the four combinations of lax or oplax structures between the pseudo(co)monoids for forming a bicategory.
For example, we can denote by $\sf{Frob}_{\opl,\opl}(\ca{K})$ the category of Frobenius pseudomonoids with oplax pseudomonoid, oplax pseudocomonoid
morphisms between them.

\section{Oplax bimonoids and Hopf monoids}\label{sec:oplaxHopf}

In this section, we define oplax variations of bimonoids and Hopf monoids in a braided monoidal bicategory
and examine some of their basic properties. These serve as basic concepts and tools for the following sections.

\subsection{Oplax bimonoids}
Previously we described the monoidal bicategory $\PsMon_{\opl}(\Kk)$ of pseudo\-mo\-noids
with oplax morphisms and 2-cells for a braided monoidal bicategory $\Kk$; one may consider pseudocomonoids therein and establish a notion of `oplax bimonoid', namely an object with a pseudomonoid and pseudocomonoid structure and an oplax interaction between them. 

\begin{definition}\label{oplaxbimonoid}
In a braided monoidal bicategory $(\Kk,\otimes,I,\braid)$, an \emph{oplax bimonoid} %is a pseudocomonoid in $\PsMon_{\opl}(\Kk)$. Explicitly, it 
is an object $M$ in $\Kk$ endowed with a pseudomonoid structure $(M,\mlt,\uni)$
and a pseudocomonoid structure $(M,\lcomlt,\lcouni)$ along with 2-cells 
\begin{gather}\label{phi1}
\begin{tikzcd}[column sep=.7in,ampersand replacement=\&]
M\otimes M\ar[ddr,phantom,"\scriptstyle{\Downarrow\theta}"description] \ar[d,"\lcomlt\otimes\lcomlt"'] \ar[r,"\mlt"] \& M \ar[dd,"\lcomlt"] \\ 
M\otimes M\otimes M\otimes M \ar[d,"1\otimes\braid\otimes 1"'] \& \\
M\otimes M\otimes M\otimes M \ar[r,"\mlt\otimes\mlt"'] \& M\otimes M
\end{tikzcd}  \\ 
\begin{tikzcd}[ampersand replacement=\&]
I \ar[d,"\sim"']\ar[r,"\uni"]\ar[dr,phantom,"\scriptstyle{\Downarrow\theta_0}"] \& M\ar[d,"\lcomlt"] \\
I\otimes I\ar[r,"\uni\otimes\uni"'] \& M\otimes M
\end{tikzcd}
\qquad
\begin{tikzcd}[ampersand replacement=\&]
M\otimes M \ar[d,"{\lcouni \otimes \lcouni}"'] \ar[r,"\mlt"]\ar[dr,phantom,"\scriptstyle{\Downarrow\chi}"] \& M\ar[d,"\lcouni"] \\
I\otimes I \ar[r,"\sim"'] \& I
\end{tikzcd}
\qquad
\begin{tikzcd}[ampersand replacement=\&]
I \ar[d,"1"']\ar[dr,phantom,"\scriptstyle{\Downarrow\chi_0}"] \ar[r,"\uni"] \& M\ar[d,"\lcouni"] \\
I\ar[r,"1"']  \& I
\end{tikzcd}\label{phi234}
\end{gather}
expressing that comultiplication and counit are oplax pseudomonoid maps as in \cref{oplax2cells}.
These data satisfy certain axioms which are explicitly written in \cref{oplaxbimonoidaxioms}.
\end{definition}

The above notion indeed generalizes ordinary bimonoids by essentially inserting 2-cells inside the commutative diagrams \cref{eq:bimon}, subject to coherent conditions. For example, any bimonoid in a braided monoidal category $\Vv$ viewed as a ($2$-)monoidal $2$-category with trivial $2$-cells is an oplax bimonoid, strict in that the (co)associativity and (co)unit constraints are identities. Moreover, an oplax bimonoid in that strict sense in the cartesian $2$-category $\Kk=(\Cat,\times,\mathbf{1})$ is a strict monoidal category, since any object has a unique strict comonoid structure. A general oplax bimonoid in $\Cat$ is up to isomorphism a monoidal category: due to $\mathbf{1}$ being terminal, a pseudocomonoid is up to isomorphism still the trivial one namely the comultiplication is isomorphic to the diagonal, and $\chi_0,\chi=\mathrm{\id}$ whereas $\theta,\theta_0$ are uniquely determined isomorphisms. Non-trivial examples will be provided in the subsequent sections.

%Christina: not needed anymore!
%\begin{remark}
% In the special case of a braided 2-monoidal 2-category $\ca{K}$, where the tensor product is a strict 2-functor, the analogy with ordinary bimonoids is even stronger: the monoidal 2-category $\PsMon_{\opl}(\Kk)$ has in this case a monoidal sub-2-category $\Mon_\opl(\Kk)$, namely it can be verified that the tensor product of two strict monoids is still a strict monoid in $\ca{K}$. Taking a comonoid therein gives rise to the above notion, this time as a true (strict) comonoid in a monoidal bicategory of (strict) monoids, still with an oplax interaction. In the general case of a braided monoidal bicategory $\Kk$, the tensor product of two strict monoids is no longer guaranteed to be a strict monoid. However, an oplax bimonoid in $\Kk$ can still be viewed as a (pseudo)comonoid in the monoidal bicategory of pseudomonoids, but of course not every comonoid in pseudomonoids is an oplax bimonoid.
%\end{remark}

Morphisms between oplax bimonoids are oplax maps between the corresponding pseudocomonoids in $\PsMon_\opl(\ca{K})$. More explicitly, we 
have the following definition.

\begin{definition}\label{def:bimonoidmap}
An \emph{oplax bimonoid morphism} between oplax bimonoids $M$ and $N$ with structure 2-cells
$(\theta,\theta_0,\chi,\chi_0)$ and
$(\xi,\xi_0,\omega,\omega_0)$ in a braided monoidal bicategory $\ca{K}$ is a 1-cell $f \colon M \to N$ that
is both an oplax pseudomonoid morphism and an oplax pseudocomonoid morphism  with structure 2-cells
\begin{equation}\label{oplaxbimonoidmap}
\begin{tikzcd}[row sep=.5in, column sep=.5in]
M\otimes M\ar[dr,phantom,"\Downarrow{\scriptstyle\phi}"description]\ar[r,"\mlt"]\ar[d,"f\otimes f"'] & |[alias=doma]| M\ar[d,"f"] \\
|[alias=coda]|N\otimes N\ar[r,"\mlt"'] & N
%\arrow[Rightarrow,from=doma,to=coda,"\phi"',shorten >=.3in,shorten <=.3in]
\end{tikzcd}\quad
\begin{tikzcd}[row sep=.5in, column sep=.5in]
I\ar[dr,phantom,bend left=15,"\Downarrow{\scriptstyle\phi_0}"description]\ar[r,"\uni"]\ar[dr,bend right,"\uni"'{name=coda,below}] & |[alias=doma]| M\ar[d,"f"] \\
& N
%\arrow[Rightarrow,from=doma,to=coda,"\phi_0"',shorten <=.15in]
\end{tikzcd}\quad
\begin{tikzcd}[row sep=.5in, column sep=.5in]
M\ar[dr,phantom,"\Downarrow{\scriptstyle\psi}"description]\ar[r,"\lcomlt"]\ar[d,"f"'] & |[alias=doma]| M\ot M\ar[d,"f\ot f"] \\
|[alias=coda]|N\ar[r,"\lcomlt"'] & N\ot N
%\arrow[Rightarrow,from=doma,to=coda,"\psi"',shorten >=.3in,shorten <=.3in]
\end{tikzcd}\quad
\begin{tikzcd}[row sep=.5in, column sep=.5in]
M\ar[dr,phantom,bend right=15,"\Downarrow{\scriptstyle\psi_0}"description]\ar[dr,bend left,"\lcouni"{name=doma}]\ar[d,"f"'] &  \\
|[alias=coda]| N\ar[r,"\lcouni"'] & I
%\arrow[Rightarrow,from=doma,to=coda,"\psi_0"',shorten <=.15in]
\end{tikzcd}
\end{equation}
satisfying four axioms expressing that $\psi$ and $\psi_0$ are 2-cells between oplax pseudomonoid maps; axioms are explicitly recorded in 
\cref{oplaxbimonoidmapaxioms}.
\end{definition}

Oplax bimonoids form a bicategory $\OplBimon(\ca{K})=\PsComon_{\opl}\left(\PsMon_\opl(\ca{K})\right).$
Notice that the axioms oplax bimonoids satisfy are very similar to those of duoidal categories~\cite{Species}, which can be explained by observing that duoidal categories are pseudomonoids in $\PsMon_\opl(\Kk)$ or equivalently in $\PsMon_\lax(\Kk)$, for $\Kk=\Cat$. Hence these notions are `half-dual' to one other. 

As mentioned above, a weak monoidal pseudofunctor between monoidal bicategories preserves pseudomonoids and analogously a (strong) monoidal one preserves pseudocomonoids as well. Below we establish that a braided monoidal pseudofunctor as in \cite[Def.~14]{Monoidalbicatshopfalgebroids} preserves oplax bimonoids between braided monoidal bicategories.

\begin{proposition}\label{prop:Fpreserves}
Let $\ca{F}\colon\ca{K}\to\ca{L}$ be a braided monoidal pseudofunctor of bicategories.
If $M$ is an oplax bimonoid in $\Kk$, then $\ca{F}M$ is an oplax bimonoid in $\ca{L}$. 
\end{proposition}

\begin{proof}
Suppose $(M,\mlt,\uni,\lcomlt,\lcouni,\theta,\theta_0,\chi,\chi_0)$ is an oplax bimonoid as in \cref{oplaxbimonoid}. Then the pseudomonoid and pseudocomonoid structure on $\ca{F}M$ are given by
\begin{gather*}
\ca{F}M\ot\ca{F}M\to\ca{F}(M\ot M)\xrightarrow{\ca{F}\mlt}\ca{F}M, \qquad I\to\ca{F}I\xrightarrow{\ca{F}\uni}\ca{F}M \\
\ca{F}M\xrightarrow{\ca{F}\lcomlt}\ca{F}(M\ot M)\to\ca{F}M\ot\ca{F}M, \qquad \ca{F}M\xrightarrow{\ca{F}\lcouni}\ca{F}I\to I
\end{gather*}
where the unnamed arrows are the structure maps of the strong monoidal pseudofunctor $\ca{F}$, namely two equivalences and its pseudoinverses.
We can then equip $\ca{F}M$ with an oplax bimonoid structure \cref{phi1,phi234} as follows: the 2-cell $\theta_{\ca{F}M}$ is formed as the composite
\begin{displaymath}
 \begin{tikzcd}[row sep=.2in]
  \ca{F}M\ot\ca{F}M\ar[dr,phantom,"\scriptstyle\cong"]\ar[r]\ar[d,"\ca{F}\lcomlt\ot\ca{F}\lcomlt"'] & \ca{F}(M\ot M)\ar[r,"\ca{F}\mlt"]\ar[dr,phantom,"\scriptstyle\Downarrow\ca{F}\theta"]\ar[d,"\ca{F}(\lcomlt\ot\lcomlt)"'] & \ca{F}M\ar[d,"\ca{F}\lcomlt"] \\
  \ca{F}(M\ot M)\ot\ca{F}(M\ot M)\ar[ddr,phantom,"\scriptstyle\stackrel{(*)}{\cong}"]\ar[r]\ar[d] & \ca{F}(M\ot M\ot M\ot M)\ar[d,"\ca{F}(1\ot\braid\ot1)"'] & \ca{F}(M\ot M)\ar[dd] \\
  \ca{F}M\ot\ca{F}M\ot\ca{F}M\ot\ca{F}M\ar[d,"1\ot\braid\ot1"'] &  \ca{F}(M\ot M\ot M\ot M)\ar[r,phantom,"\scriptstyle\cong"]\ar[ur,"\ca{F}(\mlt\ot\mlt)"description]\ar[d] & \phantom{A} \\
  \ca{F}M\ot\ca{F}M\ot\ca{F}M\ot\ca{F}M\ar[r] & \ca{F}(M\ot M)\ot\ca{F}(M\ot M)\ar[r,"\ca{F}\mlt\ot\ca{F}\mlt"'] & \ca{F}M\ot\ca{F}M
 \end{tikzcd}
\end{displaymath}
where the two unnamed invertible 2-cells are due to pseudonaturality of the monoidal structure maps of $\ca{F}$, whereas $(*)$ comes from $\ca{F}$ being (strong) monoidal and braided. Similarly the rest three 2-cells are constructed as composites of $\ca{F}\theta_0$, $\ca{F}\chi$, $\ca{F}\chi_0$ with coherent isomorphisms, and the conditions of \cref{oplaxbimonoidaxioms} are satisfied.
\end{proof}

In the spirit of \cite{Species}, one could define a notion of `bilax' monoidal pseudofunctor between braided monoidal bicategories which would be precisely such that it preserves oplax bimonoids: essentially a weak and an opweak monoidal pseudofunctor which comes with coherent isomorphisms of the form $(*)$. 
%which would ensure that such functors from 1 to K correspond to oplax bimonods in K, would be the absolutely appropriate concept of a functor! 
For our purposes though, the above setting suffices for our example at hand -- the strict monoidal forgetful functor of bicategories of 
\cref{sec:functorU}.

\subsection{Oplax modules for oplax bimonoids}

One of the fundamental properties of a bimonoid in an ordinary monoidal category is that there exists a monoidal structure on its category of modules, with a strict monoidal forgetful functor. In order to generalize this for oplax bimonoids in monoidal bicategories, we introduce the notion of an oplax module. 

\begin{definition}\label{def:oplaxmod}
Let $(M,\mlt,\uni)$ be a pseudomonoid in a monoidal bicategory $\Kk$.
A {\em (right) oplax $M$-module} is an object
$X\in \Kk$ equipped with a $1$-cell $\rho\colon X\ot M\to X$ and 2-cells
\begin{equation}\label{oplaxmodstructure}
\begin{tikzcd}[row sep=.5in, column sep=.5in]
X\ot M\otimes M\ar[dr,phantom,"\Downarrow{\scriptstyle\xi}"description]\ar[r,"1\ot\mlt"]\ar[d,"\rho\ot1"'] & |[alias=coda]| X\ot M\ar[d,"\rho"] \\
|[alias=doma]|X\otimes M\ar[r,"\rho"'] & X
%\arrow[Rightarrow,from=coda,to=doma,"\xi"',shorten >=.4in,shorten <=.4in]
\end{tikzcd}\quad
\begin{tikzcd}[row sep=.5in, column sep=.5in]
X\ar[r,"1\ot\uni"]\ar[dr,phantom,bend left=10,"\Downarrow{\scriptstyle\xi_0}"']\ar[dr,bend right,"\id"',""{name=doma,above}] &
|[alias=coda]| X\ot M\ar[d,"\rho"] \\
& X
\end{tikzcd}
\end{equation}
satisfying appropriate compatibility axioms listed in \cref{oplaxmoduleaxioms}.
When $\xi$ and $\xi_0$ are invertible, we say that $X$ is a \emph{pseudo $M$-module}.

A \emph{(right) oplax $M$-module morphism} $(f,\phi):(X,\rho_X,\xi,\xi_0)\to (Y,\rho_Y,\zeta,\zeta_0)$ consists of a 1-cell $f:X\to Y$ in $\Kk$ and a $2$-cell
\begin{equation}\label{eq:oplaxmorphism}
\begin{tikzcd}
 X\ot M\ar[d,"f\ot 1"']\ar[r,"\rho_X"]\ar[dr,phantom,"\Downarrow{\scriptstyle\phi}"description] & X\ar[d,"f"] \\
 Y\ot M\ar[r,"\rho_Y"'] & Y
\end{tikzcd}
\end{equation}
satisfying coherence conditions \cref{oplaxmodmorph1,oplaxmodmorph2}. 
If $\phi$ is invertible, we say that $(f,\phi)$ is an oplax $M$-module {\em pseudomorphism}.
Finally, a \emph{transformation} between oplax $M$-module morphisms $(f,\phi),(g,\psi)$ is a $2$-cell
$\alpha\colon f\Rightarrow g$ which is compatible with the structure morphisms, see \eqref{oplaxmodtransax}.
\end{definition}

Oplax $M$-modules together with their oplax morphisms and transformations form a bicategory
$\OplMod^{\oplax}_M$, which comes with an evident forgetful strict functor to $\Kk$. 
Similarly, we have the bicategory $\OplMod^{\pseudo}_M$ of oplax $M$-modules with pseudomorphisms and transformations.

\begin{remark}
Clearly, one could also consider (op)lax (co)modules over a pseudo(co)monoid.
In fact, the notion of pseudomodule as defined above is closely related to the notion of algebra for the pseudomonad
$(-\ot M)$ on $\Kk$ as considered in \cite{Marmolejo1999}. However, it is important to remark that %comparing that setting with the one considered here,
some (but not all) structure 2-cells in our context have the opposite direction of those e.g. in \cite[p.96]{Marmolejo1999}.

Since the appearance of the first pre-print version of this work, oplax modules over a {\em skew monoidale} also have been studied 
under the name of {\em oplax actions} in \cite{Ramon}. Therein, oplax modules are required to satisfy an additional axiom, which however follows from 
the other axioms in our setting (see \cite{Marmolejo1999}).
%The precise coherence constraints for oplax modules and their morphisms
% can be obtained by observing that oplax modules are oplax algebras
%for the 2-monad $(-\ot M)$ on $\Kk$. In \cite[p.96]{Marmolejo1999} these axioms have been spelled out precisely, where it should be noticed that the coherence 2-cells are supposed to be isomorphisms, but this does not affect the axioms.
%{\joost In Marmolejo, the 2-cells are supposed to be iso's ! We don't have iso's. Is there a reference for the real lax thing ? I guess that when they are iso's you should call them in fact pseudo-algebras, no ?}
%One of the two is the following:
%\begin{displaymath}
% \begin{tikzcd}[column sep=.1in]
%  && X\ot M\ar[dr,"\rho"] & \\
%  X\ot M\ar[r,"1\ot\uni\ot1"] & X\ot M\ot M\ar[ur,"\rho\ot1"]
%  \ar[dr,"1\ot\mlt"']\ar[rr,phantom,"\Uparrow\xi"description] && X \\
%  && X\ot M\ar[ur,"\rho"'] &
% \end{tikzcd}=
% \begin{tikzcd}[column sep=.1in]
%& X\ot M\ot M\ar[dr,"\rho\ot1"] && \\ 
%X\ot M\ar[ur,"1\ot\uni\ot1"]\ar[dr,"1\ot\uni\ot1"']\ar[rr,"\id"description]
%\ar[rr,bend left=20,phantom,"\Uparrow\xi_0\ot1"]
%\ar[rr,bend right=20,phantom,"="]
%&&
%X\ot M\ar[r,"\rho"] & X \\
%& X\ot M\ot M\ar[ur,"1\ot\mlt"'] &&
% \end{tikzcd}
%\end{displaymath}
\end{remark}

\begin{proposition}\label{tannakaeasy}
For any oplax bimonoid $M$, the bicategories $\OplMod^{\pseudo}_M$ and $\OplMod^{\oplax}_M$ have a monoidal structure, such that the forgetful functors $\OplMod^{\pseudo}_M\to \OplMod^{\oplax}_M\to \Kk$ are strict monoidal.
\end{proposition}
%
%\begin{proof}
%If $X$ and $Y$ are oplax modules over $M$,
%then we define an oplax action on $X\ot Y$ by
%\begin{displaymath}
%\rho=X\ot Y\ot M\xrightarrow{1\ot1\ot\lcomlt}X\ot Y\ot M\ot M\xrightarrow{1\ot\braid\ot1}
%X\ot M\ot Y\ot M\xrightarrow{\rho_X\ot\rho_Y}X\ot Y
%\end{displaymath}
%%$\rho=(\rho_X\ot \rho_Y)\circ (X\ot \sigma\ot M)\circ (X\ot Y \ot\comlt)$.
%Then, even if $X$ and $Y$ are strict modules, the pair $(X\ot Y,\rho)$ naturally becomes an oplax module,
%due to the structure $2$-cells $\theta$ and $\theta_0$ from \cref{oplaxbimonoid}.
%Similarly, by defining an action $\lcouni:I\ot M\cong M\to I$
%on $I$, this becomes an oplax module due to $\chi$ and $\chi_0$.
%Thanks to the strict coassociativity and counity of the comonoid structure of $M$,
%this then defines a monoidal structure on $\Mod^{\oplax}_M$ whose forgetful functor to
%$\Kk$ is strict monoidal. 
%\end{proof}
%
%\mitch

\begin{proof}
Suppose $(\Kk,\otimes,I,\braid)$ is a braided monoidal bicategory. Take an oplax bimonoid $(M,\mlt,\uni,\lcomlt,\lcouni)$ with structure 2-cells 
$(\theta,\theta_0,\chi,\chi_0)$
as in \cref{oplaxbimonoid}, and two oplax $M$-modules $(X,\rho_X,\xi,\xi_0)$ and $(Y,\rho_Y,\zeta,\zeta_0)$ as in \cref{def:oplaxmod}.
We can endow $X\ot Y$ with an oplax $M$-module structure as follows: the $M$-action is the composite
\begin{equation}\label{eq:coactiontensor}
\rho_{X\ot Y}\colon X\ot Y\ot M\xrightarrow{1\ot1\ot\lcomlt}X\ot Y\ot M\ot M\xrightarrow{1\ot\braid\ot1}
X\ot M\ot Y\ot M\xrightarrow{\rho_X\ot\rho_Y}X\ot Y
\end{equation}
and the structure 2-cells are given by
\begin{displaymath}
\begin{tikzcd}[column sep=.35in,row sep=.5in]
 X{\ot}Y{\ot}M{\ot}M\ar[rrr,"11\mlt"]\ar[d,"11\lcomlt1"']\ar[dr,"11\lcomlt\lcomlt"description]\ar[drrr,phantom,"\Downarrow{\scriptstyle11\theta}"description] &&& X{\ot}Y{\ot}M\ar[d,"11\lcomlt"] \\
X{\ot}Y{\ot}M{\ot}M{\ot}M\ar[r,"1111\lcomlt"']\ar[d,"1\braid11"'] & X{\ot}Y{\ot}M{\ot}M{\ot}M{\ot}M\ar[r,"111\braid1"]\ar[d,"1\braid111"'] & X{\ot}Y{\ot}M{\ot}M{\ot}M{\ot}M\ar[r,"11\mlt\mlt"]\ar[d,"1\braid1"]
& X{\ot}Y{\ot}M{\ot}M\ar[d,"1\braid1"] \\
X{\ot}M{\ot}Y{\ot}M{\ot}M\ar[d,"\rho\rho1"'] & X{\ot}M{\ot}Y{\ot}M{\ot}M{\ot}M\ar[r,"11\braid11"]\ar[d,"\rho\rho11"'] & X{\ot}M{\ot}M{\ot}Y{\ot}M{\ot}M\ar[dr,phantom,"\Downarrow{\scriptstyle\xi\zeta}"description]\ar[r,"1\mlt1\mlt"]\ar[d,"\rho1\rho1"'] & X{\ot}M{\ot}Y{\ot}M\ar[d,"\rho\rho"] \\
X{\ot}Y{\ot}M\ar[r,"1\lcomlt"'] & X{\ot}Y{\ot}M{\ot}M\ar[r,"1\braid1"'] & X{\ot}M{\ot}Y{\ot}M\ar[r,"\rho\rho"'] & X{\ot}Y
 \end{tikzcd}
\end{displaymath}
\begin{equation}\label{eq:xtensory}
\begin{tikzcd}[column sep=.6in,row sep=.4in]
  X\ot Y\ar[drrr,near end, bend left=15, phantom, "\Downarrow{\scriptstyle11\theta_0}"]
  \ar[dddrrr,phantom,bend right=10,"\Downarrow{\scriptstyle\xi_0\zeta_0}"']
  \ar[rrr,"11\uni"]\ar[drrr,"11\uni\uni"description]\ar[ddrrr,"1\uni1\uni"description]\ar[dddrrr,bend right,"1"']
  &&& X\ot Y\ot M\ar[d,"11\lcomlt"] \\
  &&& X\ot Y\ot M\ot M\ar[d,"1\braid1"] \\
  &&& X\ot M\ot Y\ot M\ar[d,"\rho_X\rho_Y"] \\
  &&& X\ot Y
 \end{tikzcd}
\end{equation}
where the empty squares should be filled by the coherence isomorphisms of the braided monoidal bicategory $\Kk$ (or its equivalent Gray monoid), and we have 
suppressed the $\ot$-symbol for morphisms. Combining the axioms \cref{oplaxmod1,oplaxmod2} for $X$ and $Y$ with
the oplax bimonoid axioms for $M$ as found in \cref{oplaxbimonoidaxioms}, we can verify that this constitutes
an oplax module structure on $X\otimes Y$.

Moreover, the monoidal unit $I$ of $\Kk$ can also be endowed with the structure of an oplax $M$-module, with action and structure 2-cells
\begin{equation}\label{eq:rhoI}
\rho_I\colon I\ot M\cong M\xrightarrow{\lcouni} I
\end{equation}

\begin{displaymath}
\begin{tikzcd}[column sep=.5in,row sep=.5in]
 I\ot M\ot M\cong M\ot M\ar[r,"\mlt"]\ar[d,"\lcouni\ot1"']\ar[dr,bend left=20,phantom,"\Downarrow{\scriptstyle\chi}"]\ar[dr,"\lcouni\ot\lcouni"description] & M\ar[d,"\lcouni"] \\
 M\ar[r,"\lcouni"'] & I
 \end{tikzcd}\qquad\qquad
 \begin{tikzcd}[column sep=.5in,row sep=.5in]
I\ar[r,"\uni"]\ar[dr,bend right,"1"']\ar[dr,bend left=10,phantom,"\Downarrow{\scriptstyle\chi_0}"] & M\ar[d,"\lcouni"] \\
& I
 \end{tikzcd}
\end{displaymath}

Given morphisms of oplax modules $(f,\phi):X\to Y$ and $(g,\psi):Z\to U$, define $(f,\phi)\ot (g,\psi)$ as $(f\ot g,\tau)$,  where $f\ot g$ is the tensor product in $\Kk$ and $\tau$ is given by
\begin{equation}\label{eq:ftensorg}
\begin{tikzcd}
X\ot Z\ot M \ar[d,"1\ot 1\ot \lcomlt"'] \ar[rr,"f\ot g\ot 1"] && Y\ot U\ot M \ar[d,"1\ot 1\ot \lcomlt"] \\
X\ot Z\ot M\ot M \ar[rr,"f\ot g\ot 1\ot 1"] \ar[d,"1\ot \braid\ot 1"']  && Y\ot U\ot M\ot M \ar[d,"1\ot \braid\ot 1"]\\
X\ot M\ot Z\ot M \ar[drr,phantom,"\Downarrow{\scriptstyle\phi\ot \psi}"] \ar[rr,"f\ot 1\ot g\ot 1"] \ar[d,"\rho_X\ot \rho_Z"'] && Y\ot M\ot U\ot M \ar[d,"\rho_Y\ot \rho_U"] \\
X\ot Z \ar[rr,"f\ot g"'] && Y\ot U
\end{tikzcd}
\end{equation}
Clearly, if the morphisms $(f,\phi)$ and $(g,\psi)$ are pseudo, namely if $\phi$ and $\psi$ above are invertible, then $f\ot g$ is a pseudo morphism as well, since \cref{eq:ftensorg} is a composite of invertible $2$-cells.
The tensor product of oplax module transformations is given by the tensor product of 2-cells in $\Kk$; there is no extra structure.

With the above descriptions, it can be verified that the category of oplax modules over an oplax bimonoid is a monoidal bicategory, and it directly 
follows that the forgetful functor to $\Kk$ strictly preserves the monoidal structure. 
\end{proof}

\begin{remark}\label{rem:strictmorphismtensor}
Notice that if $X$ and $Y$ are pseudo $M$-modules, $X\ot Y$ is naturally still only an oplax module even in the stricter 2-monoidal 2-category 
case, due to the non-invertible $2$-cells $\theta$ and $\theta_0$ of the oplax bimonoid used for the coherence $2$-cells \cref{eq:xtensory} of $X\ot 
Y$ in the above proof. Hence the bicategory of pseudomodules over an oplax bimonoid is in general not a monoidal bicategory.
\end{remark}

For a monoid $M$ in an ordinary monoidal category $\Cc$ (under some conditions, see e.g.\ \cite{Ver:reconstruction}), it is known that there is a bijective correspondence between bimonoid structures on $M$ and monoidal structures on the category of $M$-modules such that the forgetful functor to $\Cc$ is strict monoidal. Since the previous proposition lifts one part of this correspondence to oplax bimonoids, we now consider part of the converse direction by showing that
the monoidal structure on the bicategory of oplax modules over an oplax bimonoid completely determines its pseudocomonoid structure.

Let us assume that for a pseudomonoid $(M,\mlt,\uni)$, the bicategory $\OplMod^{\pseudo}_M$ is monoidal in such a way that the forgetful functor to $\Kk$ is strict monoidal.
Since $M$ is a pseudomonoid, it is a pseudo $M$-module $(M,\mlt)$ and so $M\ot M$ is an oplax $M$-module: denote its action by $\rho_{M\ot M}$ and structure 2-cells $(\xi^{M\ot M},\xi_0^{M\ot M})$ as in \cref{oplaxmodstructure}.
Then we can define a comultiplication $1$-cell as the composite
\begin{equation}\label{eq:comultiplication}
 \lcomlt\colon M\xrightarrow{\uni\ot\uni\ot1}M\ot M\ot M\xrightarrow{\rho_{M\ot M}} M\ot M.
\end{equation}
Moreover, the monoidal unit $I$ is also an oplax $M$-module with action denoted $\rho_I$ and 2-cells $(\xi^I,\xi_0^I)$; therefore a counit can be defined as
\begin{equation}\label{eq:counit}
 \lcouni\colon M\cong I\ot M\xrightarrow{\rho_I}I.
\end{equation}
Using notation as in \cref{oplaxbimonoid}, the required 2-cells $(\theta,\theta_0,\chi,\chi_0)$ for an oplax bimonoid structure on $M$ can be constructed as follows: $\theta$ is built from $\xi^{M\ot M}$ as the composite
%, where we used Lemma \ref{tannaka1} [for the commutativity of the square in the diagram marked by $\circledast$].
\begin{equation}\label{eq:theta}
 \begin{tikzcd}[column sep=.6in,row sep=.5in]
  M{\ot}M\ar[r,"\mlt"]\ar[d,"\uni\uni1\uni\uni1"']\ar[dr,"\uni\uni11"description] & M\ar[r,"\uni\uni1"] & M{\ot}M{\ot}M\ar[ddd,"\rho_{MM}"] \\
  M{\ot}M{\ot}M{\ot}M{\ot}M{\ot}M\ar[dd,bend right=70,"\rho_{MM}\rho_{MM}"']\ar[d,"\rho_{MM}111"description] & M{\ot}M{\ot}M{\ot}M\ar[l,"111\uni\uni1"description]\ar[ur,"11\mlt"description]\ar[d,"\rho_{MM}1"description]\ar[ddr,phantom,bend left,"\Downarrow{\scriptstyle\xi^{MM}}"] & \\
  M{\ot}M{\ot}M{\ot}M{\ot}M\ar[d,"11\rho_{MM}"description] & M{\ot}M{\ot}M\ar[dr,"\rho_{MM}"description]\ar[l,"11jj1"description] & \\
  M{\ot}M{\ot}M{\ot}M\ar[r,"1\braid1"']& M{\ot}M{\ot}M{\ot}M\ar[r,"\mlt\mlt"'] & M{\ot}M  
 \end{tikzcd}
\end{equation}
Moreover, $\theta_0$ can be built using $\xi_0^{M\ot M}$ as the composite
\begin{equation}\label{eq:theta0}
%\theta_0:= 
\begin{tikzcd}[column sep=.5in,row sep=.5in]
I\ar[r,"\uni\ot\uni"]\ar[drr,bend right,"\uni\ot\uni"'] & M\ot M\ar[r,"1\ot\uni"]\ar[dr,bend right,"1"description]\ar[dr,phantom,"\Downarrow{\scriptstyle{\xi_0^{M\ot M}}}"description]
& M\ot M\ot M\ar[d,"{\rho_{M\ot M}}"] \\
&& M\ot M
 \end{tikzcd}
\end{equation}
In a similar way, $\chi$ and $\chi_0$ can be obtained from $\xi^I$ and $\xi^I_0$ as follows
\begin{equation}\label{eq:chichi0}
%\chi:= 
\begin{tikzcd}[column sep=.25in, row sep=.2in]
M\ot M\ar[rr,"\mlt"]\ar[dd,"\lcouni\ot\lcouni"']\ar[dr,"\lcouni\ot1"']\ar[ddrr,phantom,bend left,"\Downarrow{\scriptstyle\xi^I}"] && M\ar[dd,"\lcouni"'] \\
& M\ar[dr,"\lcouni"'] & \\
I\ot I\ar[rr] &&  I
\end{tikzcd}\qquad
%\chi_0:=
\begin{tikzcd}[column sep=.25in, row sep=.2in]
I\ar[rr,"\uni"]\ar[ddrr,bend right,"1"']\ar[ddrr,phantom,bend left,"\Downarrow{\scriptstyle\xi_0^I}"'] && M\ar[dd,"\lcouni"] \\
\hole \\
&& I
\end{tikzcd}
\end{equation}

The following result shows that when we consider the monoidal structure on $\OplMod^{\pseudo}_M$ as described in \cref{tannakaeasy}, by the above construction we recover the initial pseudocomonoid structure of the oplax bimonoid $M$, which is a so-called {\em Tannakian recontruction theorem} for oplax bimonoids.
%The question is now whether the above constructions satisfy the axioms of an oplax bimonoid, and whether this establishes is a proper converse to \cref{tannakaeasy}. The following result shows that this holds in one direction, providing a so-called {\em Tannakian recontruction theorem} for oplax bimonoids.

\begin{proposition}\label{reconstruction}
Suppose that $M$ is an oplax bimonoid. Then the 1-cells \cref{eq:comultiplication,eq:counit} and the $2$-cells \cref{eq:theta,eq:theta0,eq:chichi0} constructed using the monoidal structure of $\OplMod_M^\opl$ as described in \cref{tannakaeasy} endow the underlying pseudomonoid $M$ with an oplax bimonoid structure which is isomorphic to the initial one.
\end{proposition}
\begin{proof}
The following diagram can be filled with invertible $2$-cells, showing that the the reconstructed comultiplication \cref{eq:comultiplication} using \cref{eq:coactiontensor} is isomorphic to the initial comultiplication $\comlt$.
\[
\xymatrix{
M \ar[rr]^-{\mlt\ot\mlt\ot1} \ar[d]_{\comlt} && M\ot M\ot M \ar[rr]^-{1\ot1\ot\comlt}  && M\ot M\ot M\ot M \ar[d]^-{1\ot\braid\ot1}\\
M\ot M \ar[rrrr]^{\uni\ot1\uni\ot1} \ar@{=}[rrrrd] && && M\ot M\ot M\ot M \ar[d]^{\mlt\ot\mlt}\\
&&&& M\ot M
}
\]
Furthermore, by definition it is obvious that the reconstructed counit coincides with the initial one since $\rho_I$ itself is defined as isomorphic to the counit \cref{eq:rhoI}. In the same way, one observes that the reconstructed $2$-cells $\theta,\theta_0,\chi,\chi_0$ coincide with the initial ones and hence we obtain by reconstruction an isomorphic oplax bimonoid structure on $M$.
\end{proof}

\begin{remark}
A next step would be a {\em recognition theorem} for oplax bimonoids, in the following sense. Given a pseudomonoid $M$ in $\Kk$ such that the category 
$\OplMod^{\pseudo}_M$ is monoidal and the forgetful functor to $\Kk$ is strict monoidal, when do the constructions described above 
\cref{reconstruction} provide an oplax bimonoid structure on $M$, such that the monoidal structure of $\OplMod^{\pseudo}_M$ coincides with the one 
given in \cref{tannakaeasy}? In view of the $1$-categorical case (see e.g.\ \cite{Ver:reconstruction}), in order to obtain such a theorem one would 
need a suitable generator condition on the monoidal unit of the bicategory $\Kk$. Such a result, which would 
lead to a 
bijective correspondence between oplax bimonoid structures on a given pseudo monoid $M$ and monoidal structures on its category of oplax modules with 
a strict monoidal forgetful functor to $\Kk$, is beyond the scope of the present paper and left open for future work. 
\end{remark}

\subsection{Oplax inverses}

Having established the notion of an oplax bimonoid in \cref{oplaxbimonoid}, we now investigate the appropriate notion of an antipode in this generalized setting. For that purpose, we introduce the notion of an oplax inverse in a monoidal category as a special case Morita context. We begin with some motivating definitions; in what follows, $\circ$ denotes vertical composition of 2-cells, $*$ denotes horizontal composition of 2-cells and we suppress horizontal composition of 1-cells.

\begin{definition}\cite{ElK}\label{def:Moritacontext}
A {\em (wide) Morita context} $(A,B,p,q,\mu,\tau)$ in a bicategory $\Kk$ consists of two objects $A$ and $B$, two $1$-cells $p\colon A\to B$ and $q\colon B\to A$ and two $2$-cells $\mu\colon q p\Rightarrow 1_A$ and $\tau\colon p q\Rightarrow 1_B$ such that $1_p*\mu=\tau*1_p\colon p q p\Rightarrow p$ explicitly as in
\begin{displaymath}
\begin{tikzcd}[row sep=.05in]
& B\ar[dr,bend left=10,"q"] &&& \\
A\ar[ur,bend left=10,"p"]\ar[rr,bend right,"1_A"description]\ar[rr,phantom,"\Downarrow{\scriptstyle\mu}"description]\ar[rrrr,bend right=50,"p"description]\ar[rrrr,phantom,bend right,"{\scriptstyle\cong}"] && A\ar[rr,bend left,"p"]\ar[rr,bend right,"p"description]\ar[rr,phantom,"\Downarrow{\scriptstyle1_p}"description] && B
\end{tikzcd}
\begin{tikzcd}[row sep=.05in]
 \hole \\
 =
\end{tikzcd}
\begin{tikzcd}[row sep=.05in]
&&& A\ar[dr,bend left=10,"p"description] && \\
A\ar[rrrr,phantom,bend left,"{\scriptstyle\cong}"]\ar[rrrr,bend left=50,"p\circ(q\circ p)"]\ar[rr,bend left,"p"]\ar[rr,bend right,"p"description]\ar[rr,phantom,"\Downarrow{\scriptstyle1_p}"description]\ar[rrrr,bend right=50,"p"description]\ar[rrrr,phantom,bend right,"{\scriptstyle\cong}"] && B\ar[ur,bend left=10,"q"]\ar[rr,bend right,"1_B"description]\ar[rr,phantom,"\Downarrow{\scriptstyle\tau}"description] && B
\end{tikzcd}
\end{displaymath}
and similarly $1_q*\tau=\mu*1_q\colon qpq\Rightarrow q$.
%\[
%\xymatrix{
%pqp \ar[rr]^{\mu *1_p}\ar[d]_{1_p*\tau} && 1_Ap \ar[d]^\cong\\
%p1_B \ar[rr]^\cong && p
%}\qquad
%\xymatrix{
%qpq \ar[d]_{1_q*\mu} \ar[rr]^{\tau *1_q} && 1_Bq \ar[d]^\cong\\
%q1_A \ar[rr]^\cong && q
%}
%\]

A wide Morita context is called {\em strict} if the 2-cells $\mu$ and $\tau$ are invertible.
\end{definition}

We will henceforth call a wide Morita context {\em firm} if the above composite 2-cells $1_p*\mu,\tau*1_p,1_q*\tau,\mu*1_q$ are in fact invertible. The motivation for this terminology comes from the following example. Notice that a strict wide Morita context is always firm, but the inverse is not true; firmness serves as an intermediate notion.

There exists a straightforward notion of (iso)morphism of wide Morita contexts. From now on, we will suppress the word `wide' for Morita contexts.

\begin{examples}
In the bicategory $\Bim$ of algebras, bimodules and bilinear maps, wide Morita contexts as in \cref{def:Moritacontext} coincide with the classical notion of a Morita context. In that case, it is well-known that a Morita context $(A,B,p,q,\mu,\tau)$ is strict if and only if $p$ is finitely generated and projective both as a left $A$-module and a right $B$-module and there are isomorphisms $B{\cong}{_A{\sf End}}(p)$, $A{\cong}{\sf End}_B(p)^\op$, $q{\cong}{_A\Hom}(p,A)$ ${\cong} {\Hom_B}(p,B)$, if and only if the bimodules $p$ and $q$ induce an equivalence between the categories of $A$-modules and $B$-modules. Similarly, such a classical Morita context is firm if and only if the rings $R=p\ot_B q$ and $S=q\ot_A p$ are firm and $p$ is $R$-firmly projective as left $A$-module and $S$-firmly projective as right $B$-module; in this case, there is an equivalence between the categories of firm $R$-modules and firm $S$-modules, see \cite{BV:Moritafirm,Ver:equivalence}. 

In $\Bim$, it is therefore clear that any strict Morita context is firm, but not vice versa: not every firmly projective module is necessarily 
finitely generated and projective.
\end{examples}

Similarly to the essential uniqueness of adjoints in a bicategory, for a given 1-cell $p\colon A\to B$ it is known that there can exist only one (up to isomorphism) strict Morita context $(A,B,p,q,\mu,\tau)$.
The subsequent theorem generalizes this result for firm Morita contexts in bicategories, and is fundamental for what follows; before that, a lemma establishes some required identities. 

\begin{lemma}\label{alphabeta}
Let $(A,B,p,q,\mu,\tau)$ and $(A,B,p,q',\mu',\tau')$ be two firm Morita contexts. If $\alpha$, $\beta$, $\alpha'$ and $\beta'$ are the respective inverse 2-cells of $1_p*\mu $, $1_q*\tau $, $1_p*\mu'$ and $1_{q'}*\tau'$, the following hold:
\begin{multicols}{2}
\begin{enumerate}[(i)]
\item $1_q*\alpha=\beta*1_p$ \label{it:1}
\item $\alpha*1_p=1_q*\beta$ \label{it:2}
\item $(\alpha'*1_{qp})\circ\alpha= (1_{pq'}*\alpha)\circ\alpha'$ \label{it:3}
\end{enumerate}
\end{multicols}
as well as appropriate identities including $\alpha'$ and $\beta'$.
\end{lemma}

\begin{proof}
\ul{(i)}. 
By the defining axioms of a Morita context, all arrows (2-cells in $\Kk$) from left to right in the next diagram are equal, and therefore have equal inverses. By definition, the inverse of the upper arrows is $1_q*\alpha$ and the inverse of the lower arrows is $\beta*1_p$.
\[\xymatrix{
qpqp  \ar@{=}[d] \ar@<.5ex>[rr]^-{1_q*(1_p*\mu)} \ar@<-.5ex>[rr]_-{1_q*(\tau*1_p)} && qp \ar@/_2.0pc/[ll]_{1_q*\alpha} \ar@{=}[d] \\
qpqp \ar@<.5ex>[rr]^-{(1_q*\tau)*1_p} \ar@<-.5ex>[rr]_-{(\mu*1_q)*1_p} && pq \ar@/^2.0pc/[ll]^{\beta*1_p}
}\] 
Clause (ii) is proven in a similar way.\\
\ul{(iii)}. 
This 2-dimensional equality can be depicted as
\begin{displaymath}
\begin{tikzcd}[row sep=.4in,column sep=.5in]
A\ar[drr,phantom,"\Downarrow{\scriptstyle1_{qp}}"]\ar[rrr,bend left=40,"p"]\ar[rrr,bend left=20,phantom,"\Downarrow{\scriptstyle\alpha}"']\ar[r,"p"]\ar[dr,"p"'] & B\ar[r,"q"] & A\ar[dr,phantom,"\Downarrow{\scriptstyle\alpha'}"]\ar[r,"p"]\ar[d,"p"'] & B
\\
& B\ar[ur,"q"'] & B\ar[r,"q'"'] & A \ar[u,"p"'] 
\end{tikzcd}\quad=\quad
\begin{tikzcd}[row sep=.4in,column sep=.5in]
A\ar[dr,phantom,"\Downarrow{\scriptstyle\alpha}"]\ar[rrr,bend left=40,"p"]\ar[rrr,bend left=20,phantom,"\Downarrow{\scriptstyle\alpha'}"']\ar[r,"p"]\ar[d,"p"'] & B\ar[drr,phantom,"\Downarrow{\scriptstyle1_{pq'}}"]\ar[r,"q'"]\ar[dr,"q'"'] & A\ar[r,"p"] & B
\\
B\ar[r,"q"'] & A\ar[u,"p"'] & A\ar[ur,"p"'] &\phantom{A}
\end{tikzcd}
\end{displaymath}
Similarly to the proof of (i), we will show that both 2-cells have the same inverse. Consider the following diagram, where the parallel arrows are the same due to Morita context axioms, and the inner rectangle commutes due to the interchange law.
\[
\xymatrix{
pq'pqp \ar@{=}[d] \ar@<-.5ex>[rr]_-{\tau*1_{pq'}*1_{p}} \ar@<.5ex>[rr]^-{1_{pq'p}*\mu}  
&& pq'p \ar@<-.5ex>[rr]_-{\tau'*1_{p}} \ar@<.5ex>[rr]^-{1_{p}*\mu'} \ar@/_2.0pc/[ll]_-{1_{pq'}*\alpha}
&& p \ar@{=}[d] \ar@/_2.0pc/[ll]_-{\alpha'}
\\
pq'pqp
 \ar@<.5ex>[rr]^-{\tau'*1_{pqp}}  \ar@<-.5ex>[rr]_-{1_{p}*\mu'*1_{qp}} && pqp
\ar@<.5ex>[rr]^-{\tau*1_p} \ar@<-.5ex>[rr]_-{1_p*\mu} \ar@/^2.0pc/[ll]^{\alpha'*1_{qp}} && p \ar@/^2.0pc/[ll]^{\alpha}
}
\]
\end{proof}

\begin{theorem}\label{uniquenesslaxinverse}
If $(A,B,p,q,\mu,\tau)$ and $(A,B,p,q',\mu',\tau')$ are two firm Morita contexts, there is an isomorphism $q\cong q'$ that makes the above Morita contexts isomorphic.
\end{theorem}

\begin{proof}
Denote by $\alpha$ and $\beta$ the inverses of respectively $1_p*\mu $ and $1_q*\tau $, and similarly $\alpha'$ and $\beta'$ for the corresponding 2-cells of the alternate firm Morita context, as in \cref{alphabeta}. We will show that the following $2$-cells between $q$ and $q'$
\[
\xymatrix{\phi: q \ar[rr]^-\beta && qpq \ar[rr]^-{1_q*\alpha'*1_q} && qpq'pq \ar[rr]^-{\mu*1_{q'}*\tau} && q' \\
\psi:q' \ar[rr]^-{\beta'} && q'pq' \ar[rr]^-{1_{q'}*\alpha*1_{q'}} && q'pqpq' \ar[rr]^-{\mu'*1_{q}*\tau'} && q}
\]
are mutual inverses, therefore providing the required isomorphism. 
Indeed, consider the following identities, where the first one is due to the
interchange law, and the unnamed ones are due to firmness of the Morita context: 
\begin{eqnarray*}
\psi\circ \phi 
&=& (\mu*\mu'*1_{q}*\tau'*\tau) \circ (1_{qpq'}*\alpha*1_{q'pq}) \circ (1_{qp}*\beta'*1_{pq}) \circ (1_q*\alpha'*1_q) \circ \beta\\
&\stackrel{(\ref{it:1})}{=}& (\mu*\mu'*1_{q}*\tau'*\tau) \circ (1_{qpq'}*\alpha*1_{q'pq}) \circ (1_{qpq'}*\alpha'*1_{q}) \circ (1_q*\alpha'*1_q) \circ \beta\\
&\stackrel{(\ref{it:3})}{=}& (\mu*\mu'*1_{q}*\tau'*\tau) \circ (1_q*\alpha'*1_{qpq'pq}) \circ (1_{qpq'}*\alpha *1_q ) \circ (1_q*\alpha'*1_q) \circ \beta\\
&\stackrel{(\ref{it:3})}{=}& (\mu*\mu'*1_{q}*\tau'*\tau) \circ (1_q*\alpha'*1_{qpq'pq}) \circ (1_{q}*\alpha' * 1_{qpq}) \circ (1_q*\alpha*1_q) \circ \beta\\
%&=& (\tau*1_{q}*\mu'*\mu)*(1_{qp}*\tau'*1_{qpq'pq})\circ (1_q*\alpha'*1_{qpq'pq}) \circ (1_{qpq}*\alpha' * 1_{q}) \circ (1_{qp}*\beta) \circ \beta\\
&=& (\mu*\mu'*1_{q}*\tau)\circ (1_{q}*\alpha' *1_{qpq} ) \circ (1_q*\alpha*1_q) \circ \beta\\
&=& (\mu*1_q*\tau)\circ (1_q*\alpha*1_q) \circ \beta \\
&\stackrel{(\ref{it:1})}{=}& (\mu*1_q*\tau)\circ (\beta*1_{qp}) \circ \beta\\
&=& (1_q*\tau)\circ \beta \\
&=& 1_q
\end{eqnarray*}
In a very analogous way, $\phi\circ\psi=1_{q'}$ and hence $q$ and $q'$ are isomorphic.
\end{proof}

%
%Consider the category $\Mat$, whose objects are the natural numbers and whose hom-sets $\Mat_{m,n}$ are the sets of $m \times n$ matrices with entries in a fixed commutative ring $k$. Consider $\Mat$ as a $2$-category with trivial $2$-cells. 
%A generalized inverse in this category is exactly a generalized (reflexive) inverse in the classical sense. 

%Recall that a {\em generalized (reflexive) inverse} for a matrix $A$, is a matrix $B$ such that $ABA=A$ and $BAB=B$. 

We now consider the particular case of an one-object bicategory, i.e. a monoidal category.
A firm Morita context $(\star,\star,X,Y,t_1,t_2)$ in a bicategory $\Kk$ with one object $\star$ and $\Vv=\Kk(\star,\star)$ gives rise to the following definition.

\begin{definition}\label{def:oplaxinverse}
An {\em oplax inverse} for an object $X$ in a monoidal category $\Vv$
is an object $Y$, together with morphisms $\tau_1\colon X\ot Y\to I$ and $\tau_2\colon Y\ot X\to I$ such
that $1_Y\ot \tau_1=\tau_2\ot 1_Y$ and $\tau_1\ot1_X=1_X\ot \tau_2$, and these are invertible morphisms in $\Vv$.
\end{definition}

As a result of \cref{uniquenesslaxinverse}, such a notion is unique up to isomorphism. Moreover, as a special case of the fact that pseudofunctors between bicategories preserve Morita contexts \cite[Prop.~1.10]{ElK}, we obtain that a strong monoidal functor between monoidal categories preserve oplax inverses.

In many immediate examples, oplax inverses are trivial. Indeed, in the monoidal category $\Vv=\mathsf{Vect}_k$ of vector spaces over a field, a simple dimension argument shows that an object has an oplax inverse if and only if it is isomorphic to the monoidal unit $k$. Nevertheless, as will be evident in the next sections, oplax inverses provide the required structure to capture the proper notion of antipodes for oplax bialgebras.

\subsection{Oplax Hopf monoids}
%The interest of generalized inverses for the present paper will arise from considering them in a convolution monoidal category.
%However, in other monoidal categories, less trivial examples of generalized inverses can be obtained
%An oplax bimonoid is a variation on the notion of a bialgebra in the framework of a monoidal 2-category;
%our next aim is to describe the corresponding variation of a Hopf algebra.
%Let us start by describing 

Let us first recall the {\em convolution} monoidal structure
\cite[Prop.4]{Monoidalbicatshopfalgebroids} on a hom-category between a pseudomonoid
and pseudocomonoid in a monoidal bicategory,
that naturally generalizes the classical convolution for (co)monoids
in monoidal categories. 

\begin{lemma}
Let $(M,\mlt,\uni)$ be a pseudomonoid and $(C,\lcomlt,\lcouni)$ a pseudocomonoid in $\Kk$.
The hom-category $\Kk(C,M)$ is a monoidal category, with tensor product $\odot$ defined on $1$-cells $f,g\colon C\to M$ by
\begin{equation*}
f\odot g :=
\xymatrix@!C{
C \ar[r]^-\lcomlt & C\ot C \ar[r]^-{f\ot g} & M\ot M \ar[r]^-\mlt & M
}
\end{equation*}
and on two-cells $\alpha\colon f\Rightarrow f'$, $\beta\colon g\Rightarrow g'$ by 
\begin{equation*}
\alpha\odot\beta:=\xymatrix{
C \ar[r]^-\lcomlt & 
C\ot C \ar@/^1em/[rr]^-{f\ot g} \ar@/_1em/[rr]_-{f'\ot g'} 
\ar@{}[rr]|-{\Downarrow\alpha\otimes\beta} && 
M\ot M \ar[r]^-\mlt & 
M \rlap{\ .}
}
\end{equation*}
The monoidal unit is given by $I^\odot=\uni\circ\lcouni\colon C\to I\to M$. 
\end{lemma}

If $M$ is an oplax bimonoid in $\Kk$ as in \cref{oplaxbimonoid}, the above lemma assures that
$\Kk(M,M)$ has a monoidal structure given by convolution. This is clearly different to the standard monoidal structure of every endo-hom category
$(\Kk(M,M),\circ,1_M)$ via horizontal composition, for an arbitrary (not necessarily monoidal) bicategory $\Kk$; for a meaningful relation between these two monoidal structures, see \cref{lem:monoidaliso}. Another relation, albeit not relevant to our current development, is that if $M$ is a map monoidale these form a duoidal structure on $\Kk(M,M)$, see e.g.\ \cite[Section 3.3]{BohmLack}.

Since in the classical case, the antipode of a Hopf algebra is a convolution inverse to the identity, the following definition sets the respective oplax version in an analogous setting.

\begin{definition}\label{def:oplaxantipode}
An \emph{oplax antipode} for an oplax bimonoid $M$ in a braided monoidal bicategory $\Kk$ is an oplax inverse of $1_M$ in the convolution monoidal 
category $\Kk(M,M)$.
\end{definition}

To unpack the above definition, consider an oplax bimonoid $(M,\mlt,\uni,\lcomlt,\lcouni)$ in $\Kk$. \cref{def:oplaxinverse} implies that an oplax antipode is a 1-cell $\atpd\colon M\to M$ along with 2-cells $\tau_1\colon 1_M\odot\atpd \Rightarrow I^\odot$
and $\tau_2\colon\atpd\odot1_M \Rightarrow I^\odot$ as in
\begin{equation}\label{oplaxatpd}
\begin{tikzcd}
 & M\otimes M \ar[rr,"1\ot\atpd"] & \phantom{A}\ar[d,phantom,"\Downarrow{\scriptstyle\tau_1}"] & M\ot M \ar[dr,"\mlt"] & \\
 M \ar[rr,"\lcouni"] \ar[ur, "\lcomlt"] \ar[dr, "\lcomlt"'] & & I \ar[rr,"\uni"] & & M \\
 & M\ot M \ar[rr,"\atpd\ot 1"'] & \phantom{A}\ar[u,phantom,"\Uparrow{\scriptstyle\tau_2}"] & M\ot M \ar[ur,"\mlt"'] &
\end{tikzcd}
\end{equation} 
such that $1_\atpd\odot\tau_1=\tau_2\odot1_\atpd$ and $\tau_1\odot1_{1_M}=1_{1_M}\odot\tau_2$, namely
\begin{displaymath}
\adjustbox{scale=.85}{
\begin{tikzcd}[column sep=.2in,row sep=.2in]
 &&MMM\ar[r,"\atpd1\atpd"] & MMM\ar[dr,"1\mlt"] && \\
 M\ar[r,"\lcomlt"]\ar[rrd,bend right=10,"1"',"\cong"] &
 MM\ar[rrr,phantom,"\Downarrow{\scriptstyle{1_s\ot\tau_1}}"description]\ar[dr,"1\lcouni"description]\ar[ur,"1\lcomlt"] &&& MM\ar[r,"\mlt"] & M \\
&& M\ar[r,"\atpd"'] & M \ar[ur,"1\uni"description]\ar[urr,bend right=10,"1"',"\cong"] && 
\end{tikzcd}=
\begin{tikzcd}[column sep=.2in,row sep=.2in]
 &&MMM\ar[r,"\atpd1\atpd"] & MMM\ar[dr,"\mlt1"]\ar[rrd,bend left=10,"\mlt\circ1\mlt","\cong"'] && \\
 M\ar[r,"\lcomlt"]\ar[rrd,bend right=10,"1"',"\cong"]\ar[rru,bend left=10,"1\lcomlt\circ\lcomlt","\cong"'] &
 MM\ar[rrr,phantom,"\Downarrow{\scriptstyle{\tau_2\ot1_s}}"description]\ar[dr,"\lcouni1"description]\ar[ur,"\lcomlt1"] &&& MM\ar[r,"\mlt"] & M \\
&& M\ar[r,"\atpd"'] & M \ar[ur,"\uni1"description]\ar[urr,bend right=10,"1"',"\cong"] && 
\end{tikzcd}}
\end{displaymath}
\begin{equation}\label{eq:oplaxatpdax}
\adjustbox{scale=.85}{
\begin{tikzcd}[column sep=.2in,row sep=.2in]
 &&MMM\ar[r,"1\atpd1"] & MMM\ar[dr,"\mlt1"]\ar[rrd,bend left=10,"\mlt\circ1\mlt","\cong"'] && \\
 M\ar[r,"\lcomlt"]\ar[rrd,bend right=10,"1"',"\cong"]\ar[rru,bend left=10,"1\lcomlt\circ\lcomlt","\cong"'] &
 MM\ar[rrr,phantom,"\Downarrow{\scriptstyle{\tau_1\ot1_{1_M}}}"description]\ar[dr,"\lcouni1"description]\ar[ur,"\lcomlt1"] &&& MM\ar[r,"\mlt"] & M \\
&& M\ar[r,"1"'] & M \ar[ur,"\uni1"description]\ar[urr,bend right=10,"1"',"\cong"] && 
\end{tikzcd}=
\begin{tikzcd}[column sep=.2in,row sep=.2in]
 &&MMM\ar[r,"1\atpd1"] & MMM\ar[dr,"1\mlt"] && \\
 M\ar[r,"\lcomlt"]\ar[rrd,bend right=10,"1"',"\cong"] &
 MM\ar[rrr,phantom,"\Downarrow{\scriptstyle{1_{1_M}\ot\tau_2}}"description]\ar[dr,"1\lcouni"description]\ar[ur,"1\lcomlt"] &&& MM\ar[r,"\mlt"] & M \\
&& M\ar[r,"1"'] & M \ar[ur,"1\uni"description]\ar[urr,bend right=10,"1"',"\cong"] && 
\end{tikzcd}}
\end{equation}
%\begin{eqnarray*}
%\atpd *\tau_1=\tau_2*\atpd&&:\atpd *1_M*\atpd \Rightarrow\atpd \\
%\tau_1*1_M=1_M*\tau_2&&:1_M*s*1_M \Rightarrow 1_M 
%\end{eqnarray*}
and all 2-cells $1_\atpd\odot\tau_1$, $\tau_2\odot1_\atpd$, $\tau_1\odot1_{1_M}$ and $1_{1_M}\odot\tau_2$
are invertible.

%We refer to $\atpd$ as an {\em oplax convolution inverse} for the identity on $M$.
The above says that $\atpd\odot 1_M\odot \atpd\cong\atpd$ and $1_M\odot\atpd\odot 1_M\cong 1_M$ via specific 2-isomorphisms. 

\begin{definition}\label{oplaxHopfmonoid}
An \emph{oplax Hopf monoid} in a monoidal bicategory $\Kk$ is an oplax bimonoid with an oplax antipode.
A \emph{morphism} of oplax Hopf monoids is an oplax bimonoid morphism which preserves the antipode.
\end{definition}

Notice that from \cref{uniquenesslaxinverse} it follows that the antipode is essentially unique when it exists; hence `being Hopf' is a property on 
oplax bimonoids and not structure. We obtain a (bi)category $\sf{OplHopf}(\Kk)$ of oplax Hopf monoids and morphisms between them,
with a faithful forgetful functor to $\OplBimon(\Kk)$ of oplax bimonoids. Moreover, \cref{prop:Fpreserves} extends to the case of oplax Hopf monoids as follows.

%On the other hand, for usual Hopf algebras it is known that a morphism of bialgebras automatically preserves the antipode; in the oplax case, this is true only if the oplax bimonoid morphism is {\em strict}.

%Explicitly, if $f:M\to N$ is an oplax bimonoid morphism with $(\phi,\phi_0,\psi,\psi_0)$ as in \cref{oplaxbimonoidmap} where $\psi_0$ and $\phi$ are isomorphisms, then it can be shown that $f$ preserves the oplax antipode: both $f\circ \atpd_M$ and $\atpd_N\circ f$ are oplax convolution inverses for $f$. Christina: either clarify/make into a lemma, or remove. It is too hand-wavy now! And also, I don't understand it :) Shouldn't the antipode be atpd_N rather than composed with f?
\begin{proposition}\label{prop:Fpreservesatpd}
If $\ca{F}\colon\ca{K}\to\ca{L}$ is a braided monoidal pseudofunctor and $M$ is an oplax Hopf monoid in $\ca{K}$ with antipode $\atpd$, then $\ca{F}M$ is an oplax Hopf monoid in $\ca{L}$ with antipode $\ca{F}s$.  
\end{proposition}

\begin{proof}
Suppose the monoidal pseudofunctor $\ca{F}$ has structure maps the pseudonatural equivalences $\phi_{X,Y}\colon\ca{F}X\ot\ca{F}Y\to\ca{F}(X\ot Y)$ 
and 
$\psi_{X,Y}\colon \ca{F}(X\ot Y)\to \ca{F}X\ot \ca{F}Y$ with $\phi\psi\cong\psi\phi\cong\id$, and similarly for $\phi_0$, $\psi_0$. The 1-cell $\ca{F}\atpd\colon\ca{F}M\to\ca{F}M$ along 
with the composite 2-cells formed from $\ca{F}\tau_1$ as in
\begin{displaymath}
 \begin{tikzcd}[column sep=.3in]
&\phantom{A}\ar[dr,phantom,near end,"\scriptstyle\cong"]& \ca{F}M\ot\ca{F}M\ar[drr,phantom,"\scriptstyle\cong"]\ar[d,"\phi_{M,M}"]\ar[rr,"1\ot 
\ca{F}s"] && \ca{F}M\ot\ca{F}M\ar[dr,"\phi_{M,M}"]\ar[d,"\phi_{M,M}"'] && \\
\ca{F}M\ar[rrrrrr,phantom,bend right=5,"\scriptstyle\cong"]\ar[rrrrrr,bend right=10,"\ca{F}(\mlt\circ(1\ot\atpd)\circ\lcomlt)"description]\ar[dd,"1"']\ar[r,"\ca{F}\lcomlt"]& \ca{F}(M\ot M)\ar[ur,"\psi_{M,M}"]\ar[r,"1"description] & \ca{F}(M\ot M)\ar[rr,"\ca{F}(1\ot\atpd)"description] 
&\phantom{A}\ar[dd,phantom,"\scriptstyle\Downarrow\ca{F}\tau_1"] &\ca{F}(M\ot 
M)\ar[r,"1"description] & 
\ca{F}(M\ot M)\ar[r,"\ca{F}\mlt"] & \ca{F}M\ar[dd,"1"]\\
\hole \\
\ca{F}M\ar[rrrrrr,phantom,bend left=5,"\scriptstyle\cong"]\ar[rrrrrr,bend left=10,"\ca{F}(\uni\circ\lcouni)"description]\ar[drr,"\ca{F}\lcouni"']\ar[rrr,"\ca{F}\lcouni"'] &&& 
\ca{F}I\ar[dr,"1"description]\ar[d,phantom,"\scriptstyle\cong"]\ar[rrr,"\ca{F}\uni"'] &&&\ca{F}M \\
&& \ca{F}I\ar[ur,"1"description]\ar[r,"\psi_0"'] & I\ar[r,"\phi_0"'] & \ca{F}I\ar[urr,"\ca{F}\uni"'] &&
 \end{tikzcd}
\end{displaymath}
and similarly the one formed from $\ca{F}\tau_2$ constitute an oplax antipode for the oplax bimonoid $\ca{F}M$. This can be also deduced from the fact that $\ca{F}_{M,M}\colon\ca{K}(M,M)\to\ca{K}(\ca{F}M,\ca{F}M)$ is a strong $\odot$-monoidal functor with structure maps essentially those on the top and the bottom of the diagram, therefore preserves oplax inverses.
\end{proof}

Finally, we can also express the definition of an oplax Hopf monoid in terms of fusion morphisms, similarly to the classical case recalled in \cref{subsec:bimonoids}. For an oplax bimonoid $(M,\mlt,\uni,\lcomlt,\lcouni)$,
consider $M\ot M$ as a (strict) left $M$-module and (strict) right $M$-comodule with actions given by
\begin{align*}
\rho=& M\ot M\ot M\xrightarrow{\mlt\ot1_M} M\ot M\\
\chi=& M\ot M\xrightarrow{1_M\ot \lcomlt}M\ot M\ot M
\end{align*}
If we denote by ${_M\Kk^M}(M\ot M,M\ot M)$ the monoidal subcategory of the endo-hom category
$(\Kk(M\ot M,M\ot M),\circ,1_{M\ot M})$ that consists of strict
%(strictly) left $M$-linear and right $M$-colinear, i.e.\ 
$M$-module and $M$-comodule morphisms, we have the following result.

\begin{lemma}\label{lem:monoidaliso}
For $M$ an oplax bimonoid in a monoidal bicategory $\Kk$, there exists an isomorphism of monoidal categories
$$({_M\Kk^M}(M\ot M,M\ot M),\circ,1_{M\ot M}) \cong (\Kk(M,M),\odot,I^\odot).$$
\end{lemma}

\begin{proof}
Define $F\colon\Kk(M,M)\to_M\Kk^M(M\ot M,M\ot M)$ mapping any $f:M\to M$ to
\begin{displaymath}
F(f)=M\ot M\xrightarrow{1_M\ot\lcomlt}M\ot M\ot M\xrightarrow{1_M\ot f\ot 1_M}
M\ot M \ot M\xrightarrow{\mlt\ot 1_M}M\ot M.
\end{displaymath}
One easily verifies that this is left $M$-linear and right $M$-colinear.
Conversely, given any left $M$-module and right $M$-comodule map
$g\colon M\ot M\to M\ot M$, %\in {_M\Kk^M}(M\ot M,M\ot M)$, we 
define
\begin{displaymath}
G(g)=M\xrightarrow{\uni\ot1_M}M\ot M\xrightarrow{g}M\ot M\xrightarrow{1_M\ot\lcouni}M 
\end{displaymath}
The bijection is established using the strict (co)unity conditions
for $M$, %, and similarly for the morphisms. %Christina: I guess, I haven't checked.
%and the interchange law, we find that $\tilde{\tilde f}=f$ and $\tilde{\tilde g}=g$,
%from which follows the isomorphism of categories. 
and furthermore it can be checked that  $F(f\odot f')=F(f)\circ F(f')$, hence this is a monoidal isomorphism.
\end{proof}

Under the above isomorphism, the identity $1_M\in\Kk(M,M)$ corresponds
to the so-called {\em fusion} $1$-cell %$\begin{tikzcd}[sep=.6in] M\ot M\ar[r,"1_M\ot\lcomlt"description] & M\ot M\ot M\ar[r,"\mlt\ot 1_M"description] & M\ot M,\end{tikzcd}$ 
\begin{equation}\label{fusion1cell}
M\ot M\xrightarrow{1_M\ot\lcomlt}M\ot M\ot M\xrightarrow{\mlt\ot 1_M}M\ot M
\end{equation}
and the following can be easily deduced from \cref{def:oplaxantipode}.

\begin{proposition}\label{fusion}
An oplax bimonoid is an oplax Hopf monoid if and only if the fusion 1-cell $(\mlt\ot1_M)\circ(1_M\ot\lcomlt)$ has an oplax inverse in ${_M\Kk^M}(M\ot M,M\ot M)$. 
\end{proposition}
%\subsection{Weak versus Lax}{\bf Add proposition: strong monoidal 2-functors send oplax HAs to oplax HAs}
%{\bf Add proposition: strong monoidal 2-functors send oplax HAs to oplax HAs}
%\subsection{Weak versus Lax}{\bf Add proposition: strong monoidal 2-functors send oplax HAs to oplax HAs}

\section{The object \texorpdfstring{$X^2$}{X} in \texorpdfstring{$\Span$}{Span}}
\label{sec:X2structures}

In this section, we describe various algebraic properties of $X^2$ as an object in the symmetric monoidal bicategory $\Span$;
these results will be necessary to capture the structure of Hopf and Frobenius $\Vv$-categories in
\cref{sec:FrobeniusHopfVCatsSpanV,FrobeniusVcats}, where $X$ serves
as their set of objects. Although the results of this section are valid for arbitrary groupoids as discussed in \cref{rem:groupoids},
we provide explicit proofs only for $X^2$.

\subsection{Spans}

Recall that in any category $\Cc$, a span $X\tickar Y$ is a diagram of the form $\spn{X}{S}{Y.}{f}{g}$
\begin{comment}%X pretty standard
given $\spn{X}{S}{Y}{f}{g}$ and
$\spn{Y}{T}{Z}{h}{k}$, the composite span is defined as
\begin{equation}\label{compositespan}
\begin{tikzcd}[minimum width = width("$S \times_{Y}  T$")]
& & S \times_{Y} T\ar[dl, "\pi_{S}"'] \ar[dr, "\pi_{T}"]\ar[dd,phantom, very near start,"\upback"description] & & &\\
& S \ar[dl, "f"'] \ar[dr, "g"] & & T \ar[dl, "h"'] \ar[dr, "k"]& &\\
X \ar[rr,tick] & & Y\ar[rr,tick] & & Z &
\end{tikzcd}\rlap{\quad .}
\end{equation} 
The identity 1-cell is the identity span
$\spn{X}{X}{X}{\id}{\id}$.
\end{comment}
A morphism between spans is a map $u: S \to T$ making both left and right triangles commute:
\begin{equation}\label{mapofspans}
\begin{tikzcd}[row sep=.25in]
& S \ar[d, "u"description] \ar[ddl,bend right,"f"'] \ar[ddr,bend left,"g"]&\\
& T \ar[dl,"h"description]\ar[dr,"k"description]&\\
X & & Y
\end{tikzcd}
\end{equation}
\begin{comment}%X another pretty diagram which is probably redundant
Vertically, these compose in a straightforward way. Horizontally, if $u$ is a map between spans
from $X$ to $Y$ and $v$ is a map between spans from $Y$ to $Z$ then their horizontal composite is the unique morphism $w$ that follows from the universal property of the pullback, as in the following diagram
\begin{equation}\label{hcomp2cells}
\begin{tikzcd}[row sep=.1in,column sep=.5in]
&& S\times_YQ\ar[dr,dashed]\ar[dl,dashed]\ar[dd,"\exists ! w"'] && \\
& S \ar[dd, "u"description] \ar[dddl,"f"description] && Q\ar[dd, "v"description]
\ar[dddr,"n"description] &\\
&& T\times_YP\ar[dr,dashed]\ar[dl,dashed] && \\
& T \ar[dl,"h"description]\ar[dr,"k"description]& & P\ar[dl,"r"description]\ar[dr,"\ell"description] &\\
X && Y\arrow[from=uuul,"g"description,crossing over]\arrow[from=uuur,"m"description,crossing over] && Z,
\end{tikzcd}
\end{equation}
which forms a 2-cell
$\begin{tikzcd}[column sep=.4in]
X\ar[r,bend left=40,tick]\ar[r,bend right=40,tick]
\ar[r,phantom,"\Downarrow\scriptscriptstyle{u*v}"description] & Z
\end{tikzcd}$ between the composite spans.
\end{comment}
If $\Cc$ has pullbacks, the composite $X\tickar Y\tickar Z$
is $\spn{X}{S\times_Y R}{Z}{}{}$ in the usual way. Since pullbacks are unique only up to isomorphism,
the above data forms a bicategory.
%; taking isomorhism classes of pullbacks, the composition of spans becomes strictly
%associative and unitary and this gives a strict (2-)category of sets, spans and span morphisms. 
% That's not AT ALL what's going down.
As explained in \cref{sec:pseudomon}, using coherence we can work with $\Span$ as if it was a strict $2$-category; on the other hand, taking 
isomorphism classes of spans we can work with a 1-category also denoted $\Span$, and it should be clear from the context which one we mean.
%Let $\Span_1(\Cc)$ denote the 1-category of objects in $\Cc$ and isomorphism classes of spans  and $\Span_2(\Cc)$ the 2-category of objects in $\Cc$, isomorphism classes of spans and span morphisms. Christina: never used the above notation!
From this point on, we restrict ourselves to $\Span$ for $\Cc = \Set$, where spans can also be expressed as $(f,g)\colon S\to X\times Y$
as is the case in any category with products. % and  denoted henceforth by $\Span(\Cc)$
%Whenever we refer to  below, it should be clear from the context whether we consider the ordinary category or the 2-category.

\begin{remark}\label{rem:uniquespanmap}
An evident observation, which will nevertheless be very useful in checking various axioms in the following sections, is that there is at most one 
span morphism with target a span with a monic leg. Explicitly, if there were two maps of spans $u, u'\colon S\to T$ as in \cref{mapofspans} and for 
example the leg $h\colon T\to X$ is monic, then $u=u'$ since $h\circ u=f=h\circ u'$. More generally, a similar argument shows that there is at 
most one span morphism with target a span with jointly monic legs. 
As a result, in later axioms that involve equalities of pasted composites of 2-cells in $\Span$, whenever the target span has that property, the axiom 
verification is straightforward since there can only be one such 2-cell. 
\end{remark}

The bicategory $(\Span,\times,1=\{\star\})$ is symmetric monoidal,
with $\braid\colon X\times Y\tickar Y\times X$
given by the span $\spn{X\times Y}{X\times Y}{Y\times X}{\id}{\mathrm{sw}}$ where $\mathrm{sw}$
is the switch map $(x,y) \mapsto (y,x)$ in $\Set$. If we view $\Set$ as a 2-category with trivial 2-cells, there is a faithful (strict) monoidal (strict) functor
\begin{equation}\label{embeddingSet}
\Set \to \Span 
\end{equation} 
which acts as the identity on objects, and maps a function $f\colon X \to Y$
to the span $\spn{X}{X}{Y}{\id}{f}$. Indeed $g\circ f\colon X\to Y\to Z$ is mapped to the span $\spn{X}{X}{Z}{\id}{g\circ f}$ by choosing 
pullbacks along the identity to be identity in $\Set$. In a similar way there is an embedding from $\Set^\op$ to $\Span$.
%This is really a strict functor between the trivial 2-cell 2-category Set and the bicategory Span. That strict functor is also strict monoidal!

\subsection{Trivial monoid and comonoid structures}\label{sec:trivialmon}

Since every set $X$ has a canonical comonoid
structure in $\Set$ given by the diagonal map $\diag \colon X \to X\times X$ and unique map $! \colon X \to 1$,
the monoidal embedding $\Set\to\Span$ as in \cref{embeddingSet} %this happens because the functor is strict in every way!
yields a comonoid structure on $X$ in $\Span$ with $(\id,\diag)\colon X\tickar X\times X$ as the comultiplication
and $(\id,!)\colon X\tickar 1$ as the counit; we refer to this as the \emph{trivial comonoid structure} on $X$ in $\Span$.
Similarly, the contravariant embedding yields a monoid structure on $X$ which is the reverse of the comonoid structure and is called the
\emph{trivial monoid structure} on $X$. Notice that these structures are \emph{strict} rather than pseudo in the monoidal bicategory of spans, namely 
the (co)associator and (co)unitors are identities and the axioms \cref{eq:psmoneqs} are trivially satisfied (since for example $c_{m,m}$ in this 
case is the identity).
%{\chris Notice that by canonical choice of pullbacks in Set, for example the pullback along an identity is an identity which is not an evil thing 
%to do, the (co)monoids that are induced from those in Set are actually STRICT indeed! This is the only case that 
%the $c_{m,m}$ from \cref{eq:psmoneqs} which is what created all problems with the strict definition oroginally, is actually identity. Should 
%definitely explain this, also used e.g. in \cref{Vgraphsascomonoids}.}

%The definition of a strict monoid in a monoidal bicategory is mentioned in Street, Frobenius monads and pseudomonoids

%{\chris Not only is the above true, but moreover there's not really a pseudo(co)monoid structure possible, with these (co)multiplication and 
%(co)unit! Namely looking for a pseudo(co)associativity isomorphism, only the identity can be used! Monic legs again.} Decided to not mention that, 
%since it comes later. Good to have in mind.

In particular, on an object of the form $X^2$ in $\Span$, we will denote the trivial comonoid structure as $(X^2,\zeta,\nu)$ where
\begin{equation}\label{trivialcomonoidstr}
\begin{tikzcd}
& X^2\ar[dr,"\diag_{X^2}"]\ar[dl,"\id"'] & \\
X^2\ar[rr,tick,"\zeta"'] && X^4
\end{tikzcd}\qquad
\begin{tikzcd}
& X^2\ar[dr,"!"]\ar[dl,"\id"'] & \\
X^2\ar[rr,tick,"\nu"'] && 1
\end{tikzcd}
\end{equation}

\begin{remark}
Notice that while $\Comon(\Set) \cong \Set$ as is the case in any cartesian monoidal
category, here $\Comon(\Span)\ncong\Span$ since the latter is no longer cartesian: the categorical product in this category is given by the disjoint union.
%because the universal property of products fails to hold under the new definition of composition. ;
Hence our so-called `trivial' comonoid structure above is not so in that usual way.
\end{remark}

\subsection{Groupoid pseudomonoid and pseudocomonoid structures}\label{sec:groupmoncomon}

For any set $X$, one can form the codiscrete category whose objects are elements of $X$
and where there is a single arrow $(x,y) \in X^2$ between any pair of objects $x,y \in X$. The identity arrows are the pairs $(x,x)$ and composition
is $((x,y),(y,z)) \mapsto (x,z)$. This category is in fact a groupoid: inverses are computed by  $(x,y) \mapsto (y,x)$. This groupoid gives rise to a 
pseudomonoid structure on $X^2$ in $\Span$
with the following multiplication and unit
\begin{equation}\label{groupoidmonoidstr}
\cd[]{
 & X^3 \ar[dl]_{1\times\diag\times1} \ar[dr]^{1\times!\times1=\pi_{13}} & \\
 X^4\ar[rr]|-{\object@{|}}_-\mu & & X^2
}
\quad \quad 
\cd[]{
 & X \ar[dl]_{!} \ar[dr]^{\diag} & \\
 1\ar[rr]|-{\object@{|}}_-\eta & & X^2
}
\end{equation}
We call this the \emph{groupoid pseudomonoid structure} on $X^2$.
The reverse structure $u=(\pi_{13},1{\times}\diag{\times}1)\colon X^3\to X^2{\times}X^4$
and $v=(\diag,!)\colon X\to X^2{\times}1$ is called the \emph{groupoid pseudocomonoid structure} on $X^2$.

Explicitly, the coassociator $\beta\colon(1\times u)\circ u\cong (u\times1)\circ u$ and counitors $t\colon (v\times1)\circ u\cong1$, $s\colon 
(1\times 
v)\circ u\cong 1$ are formed between
\begin{equation}\label{eq:coassociator}
 \begin{tikzcd}[column sep={.7in,between origins},row sep=.2in]
  && P\ar[dd,phantom, near start,"\upback"description]\ar[dl]\ar[dr]\ar[dddd,dotted,bend left] && \\
  & X^3\ar[dl,"\pi_{13}"']\ar[dr,"1\Delta1"] && X^2\times X^3\ar[dl,"11\pi_{13}"']\ar[dr,"111\Delta1"] & \\
  X^2\ar[rr,tick,"u"] && X^2\times X^2\ar[rr,shift left,tick,"1\times u"]\ar[rr,shift right,tick,"u\times1"'] && X^2\times X^2\times X^2 \\
  & X^3\ar[ul,"\pi_{13}"]\ar[ur,"1\Delta1"'] && X^3\times X^2\ar[ul,"\pi_{13}11"]\ar[ur,"1\Delta111"'] & \\
  && Q\ar[uu,phantom, near start,"\dpback"description]\ar[ul]\ar[ur] &&
 \end{tikzcd}
\end{equation}

\begin{displaymath}
 \begin{tikzcd}[column sep={.5in,between origins},row sep=.2in]
&& A\ar[dd,phantom, near start,"\upback"description]\ar[dl]\ar[dr]\ar[ddd,dotted,bend left] && \\
& X^3\ar[dl,"\pi_{13}"']\ar[dr,"1\Delta1"] && X^2\times X\ar[dl,"11\Delta"']\ar[dr,"11!"] & \\
X^2\ar[rrrr,bend right=8,tick,"\id_{X^2}"']\ar[rr,tick,"u"] && X^2\times X^2\ar[rr,tick,"1\times v"] && X^2 \\
&& X^2\ar[ull,"1"]\ar[urr,"1"'] &&
\end{tikzcd}\qquad
 \begin{tikzcd}[column sep={.5in,between origins},row sep=.2in]
&& B\ar[dd,phantom, near start,"\upback"description]\ar[dl]\ar[dr]\ar[ddd,dotted,bend left] && \\
& X^3\ar[dl,"\pi_{13}"']\ar[dr,"1\Delta1"] && X^2\times X\ar[dl,"\Delta11"']\ar[dr,"!11"] & \\
X^2\ar[rrrr,bend right=8,tick,"\id_{X^2}"']\ar[rr,tick,"u"] && X^2\times X^2\ar[rr,tick,"v\times 1"] && X^2 \\
&& X^2\ar[ull,"1"]\ar[urr,"1"'] &&
\end{tikzcd}
\end{displaymath}
where $P$ and $Q$ are isomorphic to $X^4$, and $A$, $B$ are isomorphic to $X^2$. The precise span isomorphisms can be computed in a clear but tedious 
way: for example, taking the usual choice of pullbacks in $\Set$, we can compute that
\begin{align*}
X^3{\times}X^2{\times}X^3\supseteq P &{=}\{((x_1,x_2,x_3),(x_4,x_5),(x_6,x_7,x_8))\;|\;(x_1,x_2,x_2,x_3){=}(x_4,x_5,x_6,x_8)\}\cong 
\{(x_1,x_2,x_7,x_3)\} \\
X^3{\times}X^3{\times}X^2\supseteq Q &{=}\{((y_1,y_2,y_3),(y_4,y_5,y_6),(y_7,y_8))\;|\;(y_1,y_2,y_2,y_3){=}(y_4,y_6,y_7,y_8)\}\cong 
\{(y_1,y_5,y_2,y_3)\}
\end{align*}
and the precise isomorphism that commutes with the legs can be written according to the above 
representation, namely $x_1\mapsto y_1$, $x_2\mapsto y_5$, $x_7\mapsto y_2$ and $x_3\mapsto y_3$.

Notice that by \cref{rem:uniquespanmap}, the above structure span isomorphisms are unique: the counitors involve the identity, and the coassociator 
has monic legs on the right side. For the same reasons, the pseudocomonoid axioms -- dual to \cref{eq:psmoneqs} -- hold: both span 2-isomorphisms 
involve a span with a monic leg, therefore they must be equal.

\subsection{Oplax bimonoid and oplax Hopf monoid structures}\label{oplaxHopfX2}

The following result establishes the oplax bimonoid structure of $X^2$ in the symmetric monoidal bicategory $\Span$.

\begin{proposition}\label{X2oplaxbimonoid}\hfill
\begin{enumerate}[(i)]
\item For any set $X$, the groupoid pseudomonoid $(\lmlt,\luni)$ \cref{groupoidmonoidstr} %Christina: even though this is not a local multiplication 
%in our usual sense, I think we use those letters because we want to reserve mlt and uni for V-categories.
and trivial comonoid $(\zeta,\nu)$ \cref{trivialcomonoidstr} structures
on $X^2$ make it an oplax bimonoid in $\Span$.
\item For any set $X$, the trivial monoid and groupoid pseudocomonoid structures on $X^2$ make it an
oplax bimonoid in $\Span$.
\end{enumerate}
\end{proposition}
\begin{proof}
Since $\Span$ is a symmetric monoidal bicategory and $X^2$ is a strict comonoid with the trivial comonoid structure,
$X^2\times X^2$ is a pseudocomonoid with comultiplication and counit similarly to \cref{monoidalpseudomonoids}
\begin{displaymath}
\begin{tikzcd}[column sep=.6in]
 X^2\times X^2\ar[r,tick,"\zeta\times\zeta"] & X^2\times X^2\times X^2\times X^2\ar[r,tick,"1\times\sigma_{X^2,X^2}\times1"] & X^8, & X^2\times 
X^2\ar[r,tick,"\nu\times\nu"] & 1
\end{tikzcd}
\end{displaymath}
The data for the oplax bimonoid structure (\cref{oplaxbimonoid}) are 
span morphisms ${\theta},{\theta_0},{\chi},{\chi_0}$ defined below.
\begin{equation}\label{phi12}
\cd[@=1.2em]{
& & & X^3 \ar[dl]|-{\id} \ar[dr]|-{\pi_{13}} \ar@{.>}@/^1em/[dddddd]|-{{\theta}} & & & \\ 
& & X^3 \ar[ddll]|-{1\diag1} \ar[dr]|-{\pi_{13}} & & X^2 \ar[dl]|-{\id} \ar[ddrr]|-{\diag} \\ 
& & & X^2 & & & \\ 
X^4 & & X^8 & & X^8 & & X^4 \\ 
 & X^4 \ar[ul]|-{\id} \ar[ur]|-{\diag^2} & & X^8 \ar[ul]|-{\id} \ar[ur]|-{11\mathrm{sw} 11} & & X^6 \ar[ul]|{1\diag 11\diag 1} \ar[ur]|-{\pi_{1346}} &\\ 
 & & X^4 \ar[ul]|-{\id} \ar[ur]|-{\diag^2} & & & &\\
& & & P \ar[ul]|-{1\diag 1} \ar[uurr]^{\diag}
}
\quad
\cd[@!R@=1.2em]{
& & X \ar[dl]_{\id} \ar[dr]^{\diag} \ar@{.>}@/^1em/[dddd]|-(.65){{\theta_0} = \diag} & & \\
& X \ar[dl]_{!} \ar[dr]^{\diag} & & 
X^2 \ar[dl]_{\id} \ar[dr]^{\diag} & \\
1 & & X^2 & & X^4 \\
& & & & \\
& & X^2 \ar[uull]^{!} \ar[uurr]_{\diag^2} & & 
}
\end{equation}
\begin{equation}\label{phi34}
\cd[@!R@=1em]{
& & X^3 \ar[dl]_{\id} \ar[dr]^{\pi_{13}} \ar@{.>}@/^1em/[dddd]|-(.65){{\chi} = 1\diag 1} & & \\
& X^3 \ar[dl]_{1\diag1} \ar[dr]^{\pi_{13}} & & 
X^2 \ar[dl]_{\id} \ar[dr]^{!} & \\
X^4 & & X^2 & & 1 \\
& & & & \\
& & X^4 \ar[uull]^{\id} \ar[uurr]_{!} & & 
}
\quad 
\cd[@!R@=1em]{
& & X \ar[dl]_{\id} \ar[dr]^{\diag} \ar@{.>}@/^1em/[dddd]|-(.65){{\chi_0} = !} & & \\
& X \ar[dl]_{!} \ar[dr]^{\diag} & & 
X^2 \ar[dl]_{\id} \ar[dr]^{!} & \\
1 & & X^2 & & 1 \\
& & & & \\
& & 1 \ar[uull]^{\id} \ar[uurr]_{!} & & 
}
\end{equation}
where $X^4\times X^6\supseteq P\cong X^3$ and $\theta$ is the (unique) span isomorphism that fits therein, like above.
The conditions listed in \cref{oplaxbimonoidaxioms} can all be verified again by \cref{rem:uniquespanmap} since all have a span with monic leg as 
target, therefore 
$(X^2,\mu,\eta,\zeta,\nu,\id,\Delta,1{\times}\Delta{\times}1,!)$ is an oplax bimonoid in the symmetric monoidal bicategory $\Span$.

Since $\Span$ is isomorphic to $\Span^\op$, the two statements are equivalent.
\end{proof}

\begin{remark}\label{oplaxbimonoidproperty}
In general, asking for a pseudomonoid and pseudocomonoid to be an oplax bimonoid amounts
to specifying structure, not a property; it requires the existence
of four 2-cells subject to axioms. %\crefrange{ax1}{ax10}. 
However, there are cases like above where that structure is unique when it exists, according to \cref{rem:uniquespanmap}: in all cases 
\cref{phi12,phi34}, the target span has at least one monic leg.
\end{remark}

Finally, $X^2$ is an oplax Hopf monoid in $\Span$ as in \cref{oplaxHopfmonoid}.

\begin{proposition}\label{X2Hopf}
For any set $X$, the oplax bimonoid structure $(X^2,\mu,\eta,\zeta,\nu)$
uniquely extends to an oplax Hopf monoid structure on $X^2$.
\end{proposition}

\begin{proof}
The oplax antipode is given by the span $\atpd\colon\spn{X^2}{X^2}{X^2}{\id}{\textrm{sw}}$,
along with
two 2-cells $\tau_1,\tau_2$ \cref{oplaxatpd} in $\Span$, computed to be the upper and lower span morphisms
\begin{equation}\label{tauforX2}
\cd[@C+3em]{
 & X^2 \ar@/_1em/[dl]_{\id} \ar@/^1em/[dr]^{\diag\circ\pi_1} \ar[d]^-{\tau_1} & \\
 X^2 & X^3 \ar[l]^{\pi_{12}} \ar[r]_{\diag\circ\pi_3} & X^2 \\
 & X^2 \ar@/^1em/[ul]^{\id} \ar@/_1em/[ur]_{\diag\circ\pi_2} \ar[u]_{\tau_2} & 
 }
\end{equation}
with $\tau_1 \colon (a,b) \mapsto (a,b,a)$
and $\tau_2 \colon (a,b)\mapsto(a,b,b)$. These are monomorphisms,
and moreover according to \cref{oplaxbimonoidproperty} they are unique.
One can now verify that $s$ and $1_{X^2}$ by means of the 
 2-cells $\tau_1$ and $\tau_2$ are oplax inverses in the convolution category $\Span(X^2,X^2)$.
\end{proof}

\begin{remark}%[On canonical maps]
%Another way to identify Hopf monoids among bimonoids in the classical case 
%is to ask that either of the
%canonical \emph{fusion} maps 
%\begin{equation}
%\begin{gathered}
%\begin{tikzcd}
%H \otimes H \ar[r,"H \otimes \comlt"] & H \otimes H \otimes H \ar[r,"\mlt \otimes H "] & H \otimes H 
%\end{tikzcd} \\
%\begin{tikzcd}
%H \otimes H  \ar[r,"\comlt \otimes H "] & H \otimes H \otimes H \ar[r,"H \otimes \mlt"] & H \otimes H 
%\end{tikzcd}
%\end{gathered}
%\end{equation}
%be invertible. 
%Due to the equivalent formulation of oplax Hopf monoids in terms of fusion maps as in \cref{fusion}, 
We could alternatively consider the fusion 1-cell \cref{fusion1cell} for $(X^2,\mu,\eta,\zeta,\nu)$
\begin{displaymath}
\begin{tikzcd}[column sep=.45in]
c\colon X^4\ar[r,tick,"1_{X^2}\times\zeta"] & X^6\ar[r,tick,"\mu\times1_{X^2}"]& X^4  
\end{tikzcd}
\end{displaymath}
which is computed to be a span $X^4\leftarrow X^3\rightarrow X^4$ 
%Examining the first of these, and considering the oplax bimonoid structure on $X^2$ we get a canonical map
%$c \colon X^4 \To X^4 $
defined by $(a,b,b,c) \mapsfrom (a,b,c) \mapsto (a,c,b,c)$. 
As expected from \cref{fusion}, this is not invertible in $\Span$, but it allows an oplax inverse in ${_{X^2}}\Span^{X^2}(X^4,X^4)$ namely the following composite
%we know that in classical cases when an inverse to $c$ exists it is 
\begin{displaymath}
\begin{tikzcd}[column sep=.7in]
\bar c\colon X^4\ar[r,tick,"1_{X^2}\times\zeta"] & X^6 \ar[r,tick,"1_{X^2}\times\sigma\times{1_{X^2}}"] & X^6 \ar[r,tick,"\lmlt\times 1_{X^2}"] & X^4
\end{tikzcd}
\end{displaymath}
%We use ${\bar c}$ to denote this composite. %, the candidate for $c^{-1}$.
%Just as before we find that $\bar c$ is not inverse to $c$ but instead that there are 2-cells $c{\bar c} \Rightarrow 1_{X^4}$ and ${\bar c} c \Rightarrow 1_{X^4}$ which are twisted adjoint and generalized inverses. 
Once we recognize that both legs of the span $c$ are monomorphisms, it is immediate that $\bar c$ and $c$ are oplax inverses. 
Notice that ${\bar c}$ given by $(a,b,c,b)\mapsfrom (a,b,c)\mapsto (a,c,c,b)$ is the reverse span of $c$ up to a switch between $b$ and $c$.

%This observation suggests an alternative and perhaps equivalent way to characterise oplax Hopf bimonoids. 
%As a final note, in classical cases we can relate the antipode to the inverse of the canonical map using the following diagram.
%\begin{equation}
%\cd[]{
% & X^4 \ar[r]^{\bar c} & X^4 \ar[dr]^{X^2 \couni} & \\
%X^2 \ar[rrr]_{\braid} \ar[ur]^{\uni X^2} & & & X^2
%}
%\end{equation}
\end{remark} 

\subsection{Frobenius monoid structure}\label{FrobMonStr}

The following proposition exhibits how different combinations of the earlier monoid and comonoid structures give rise to Frobenius (pseudo)monoids in 
the symmetric monoidal bicategory $\Span$.

\begin{proposition}\label{X2Frobenius}\hfill
\begin{enumerate}[(i)]
\item For any set $X$, the trivial monoid $((\diag,\id),(!,\id))$ and its reverse trivial comonoid structures 
make $X$ into a Frobenius (strict) monoid in $\Span$; clearly also $X^2$ with \cref{trivialcomonoidstr} and its reverse.
\item For any set $X$, the groupoid monoid $(\mu,\eta)$ \cref{groupoidmonoidstr} and its reverse groupoid comonoid
structure $(u,v)$ on $X^2$ make it a Frobenius pseudomonoid in $\Span$.
\end{enumerate}
\end{proposition}
\begin{proof}
The first statement can be easily verified. For the second one, we need to provide the structure isomorphisms of \cref{pseudoFrobcond} and verify the 
conditions of \cref{sec:Frobpseudo}.
The middle composite is
\begin{displaymath}
\begin{tikzcd}[minimum width = width("$X^2 \times X^2$"), column sep = 0.3cm]
& & P\cong X^4 \ar[dr,dashed,"\pi_{134}"] \ar[dl,dashed,"\pi_{124}"'] \ar[dd,phantom, near start,"\upback"description] & & \\
& X^3\ar[dl,"1\times\diag\times1"']\ar[dr,"\pi_{13}"] & & X^3\ar[dl,"\pi_{13}"']\ar[dr,"1\times\diag\times1"] & \\
X^2\times X^2\ar[rr,tick,"\mu "] & & X^2\ar[rr,tick,"\delta "] & & X^2\times X^2
\end{tikzcd}
\end{displaymath}
which explicitly acts as $(a,b,b,d)\mapsfrom (a,b,c,d)\mapsto (a,c,c,d)$. 
The downside composite on the other hand is
\begin{displaymath}
\begin{tikzcd}[minimum width = width("$X^2 \otimes X^2 \times X^2$"), column sep = 0.15cm]
& & Q\cong X^4\ar[dr,dashed, near start,"1^2\times\diag\times1"]\ar[dl,dashed, near start,"1\times\diag\times 1^2"']
\ar[dd,phantom, near start,"\upback"description] & & \\
& X^2\times X^3\ar[dl, near start,"1^2\times\pi_{13}"']\ar[dr,"1^3\times\diag\times1" description]
& & X^3\times X^2\ar[dl,"1\times\diag\times1^3" description]\ar[dr, near start,"\pi_{13}\times1^2"] & \\
X^2\times X^2\ar[rr,tick,"1\times\delta "] & & X^2\times X^2\times X^2\ar[rr,tick,"\mu \times1"] & & X^2\times X^2
\end{tikzcd}
\end{displaymath}
So the span isomorphism is between $P\subseteq X^3\times X^3$ and $Q\subseteq X^2\times X^3 \times X^3\times 
X^3$, and the other isomorphism is formed 
similarly. Even though the spans involved do not have monic legs in this case as earlier, they are jointly monic; again by \cref{rem:uniquespanmap}, 
this fact renders the structure span isomorphisms unique. Finally, all conditions 
\cref{eq:rightpsmod12,eq:leftpsmod12,eq:leftpscomod12,eq:rightpscomod12} hold for the same reasons.
\end{proof}

%\begin{remark}
%Since the above multiplication and comultiplication structure on $X^2$ are in fact adjoints in $\Span$,
%i.e. reversed spans, it can be deduced that $X^2$ is a \emph{naturally Frobenius
%(op)map monoidale}. In \cite{BohmLack} the very same object $X^2$ in $\Span$
%is considered as such, but with the trivial rather than groupoid monoid and comonoid structures.
%{\chris Not true after all. Discuss with Mitch some issues for adjoints in Span. which would
%lead to re-thinking 3.5.}
%\end{remark}

\begin{remark}\label{rem:groupoids}
The above structures arise not just on the set $X^2$, but for
an arbitrary groupoid $\Gg=(\xymatrix{G_1 \ar@<.5ex>[r]|s \ar@<-.5ex>[r]|t & G_0})$ with set of objects $G_0$,
set of morphisms $G_1$, and $s,t$ the source and target maps.
Any $\Gg$ gives rise to a pseudomonoid (and a pseudocomonoid) in $\Span$ with multiplication and counit as follows
\begin{equation}\label{groupoidmonoidstr2}
\cd[]{
 & G_2 \ar[dl]_-{(s,t)} \ar[dr]^{\mlt} & \\
 G_1\times G_1\ar[rr]|-{\object@{|}} & & G_1
}
\quad \quad 
\cd[]{
 & G_0 \ar[dl]_{!} \ar[dr]^{e} & \\
 1\ar[rr]|-{\object@{|}} & & G_1
}
\end{equation}
where $G_2=G_1{_s\times_t}G_1$ are the composable morphisms of $\Gg$ and $e:G_0\to G_1$ gives the identity;
this determines the `groupoid' structures of \cref{sec:groupmoncomon}. Moreover,
the trivial monoid and groupoid pseudocomonoid structures on $\Gg_1$ make it an oplax bimonoid in $\Span$
that uniquely extends to an oplax Hopf monoid, analogously to \cref{X2oplaxbimonoid,X2Hopf}.
Finally, the groupoid pseudomonoid and pseudocomonoid structures on $\Gg_1$ make it a Frobenius pseudomonoid in $\Span$
as in \cref{X2Frobenius}.
\end{remark}

The results of this section can be summarised in the following table. The combinations
of structures in the columns give oplax Hopf monoid structures on $X^2$ in $\Span$,
and in the rows give Frobenius monoid structures.
\begin{table}[H]
\begin{tabular}{l|ll}
& Hopf & Hopf\\
\hline
Frob & Trivial monoid & Trivial comonoid\\
Frob & Groupoid comonoid & Groupoid monoid\\
\hline
\end{tabular}
\caption{}\label{Streetstable}
\end{table}
A similar observation was made by Street in \cite{streetFAMC}, where
the possible Frobenius and Hopf algebra structures for a group algebra $kG$ were described.

%\subsection{Groupoids as oplax Hopf monoids}
%
%The results of the previous sections generalize smoothly to arbitrary In the remaining part of the paper, we will use exp
%
%Thm : Any groupoid $G$ is an oplax Hopf monoid in $\Span$, using trivial comonoid structure. \\
%Questions:\\ 
%Is a category a groupoid if and only if it is an oplax Hopf monoid ? I guess not.\\
%Is a groupoid a Frobenius monoid in $\Span$ using reversed comonoid structure ?

\section{The symmetric monoidal bicategory \texorpdfstring{$\Span|\Vv$}{Span|V}}\label{sec:SpanV}

In~\cite{Gabipolyads}, starting from a bicategory $\Kk$ a new bicategory $\Span|\Kk$ is constructed,
with the property that when $\Kk$ is monoidal so is $\Span|\Kk$. 
The two special cases that are addressed 
(due to the Hopf objects of interest) are $\Span|\Cat$ and $\Span|\Vv$, for $\Vv$ a braided monoidal category considered 
as a one-object monoidal bicategory.

In our context, we take $\Vv$ to be any monoidal category, this time viewed as a monoidal 2-category with trivial 2-cells;
this induces some sort of `level-shift' compared to B{\"o}hm's primary examples.
We give an explicit description of the monoidal bicategory $\Span|\Vv$~\cite[\S 2.1]{Gabipolyads} in our particular case of interest,
and in the following \cref{sec:FrobeniusHopfVCatsSpanV,FrobeniusVcats} we show how it serves as a common framework for Hopf and Frobenius $\Vv$-categories: they are expressed as oplax Hopf monoids and Frobenius pseudomonoids in $\Span|\Vv$ respectively.

\subsection{Monoidal bicategory structure}

The 0-cells in $\Span|\Vv$ are pairs $(X,M)$ where $X$ is a set and $M\colon X\to\Vv$ is a functor,
given by a family of objects $\{M_x\}_{x\in X}$ in $\Vv$; we use the shorthand notation $M_X$ for the pair $(X,M)$.
A 1-cell from $M_X$ to $N_Y$ in $\Span|\Vv$ consists of a span $\spn{X}{S}{Y}{f}{g}$
of sets, along with a natural transformation
 \begin{equation}\label{1cell2}
 \begin{tikzcd}[row sep=.07in]
 & |[alias=doma]|X\ar[dr,bend left=15,"M"] & \\
 S\ar[ur,bend left=15,"f"]\ar[dr,bend right=15,"g"'] & & \ca{V} \\
 & |[alias=coda]|Y\ar[ur,bend right=15,"N"']
 \arrow[Rightarrow,from=doma,to=coda,"\alpha",shorten >=.1in,shorten <=.1in]
 \end{tikzcd}
 \end{equation}
whose components are arrows $\alpha_s\colon M_{fs}\to N_{gs}$ in $\ca{V}$.
We use the shorthand $^f\alpha^g\colon M_X\to N_Y$ and specify $S$ separately when needed.
A 2-cell in $\Span|\Vv$ denoted by $\phi_u\colon{^f\alpha^g}\Rightarrow {^h\beta^k}\colon M_X\to N_Y$ is a map of spans $u\colon S\to T$,
i.e.\ $hu=f$ and $ku=g$, such that $\alpha$ factorizes through $\beta$ in the sense that $\alpha = \beta*1_u$:
\begin{equation}\label{2cellsSpanV}
\begin{tikzcd}[row sep=.11in]
& & |[alias=doma]|X\ar[dr,"M"] & \\
S\ar[r,"u"]\ar[urr,bend left=15,"f"]\ar[drr,bend right=15,"g"'] & T\ar[ur,"h"]\ar[dr,"k"'] & & \ca{V} \\
& & |[alias=coda]|Y\ar[ur,"N"']
\arrow[Rightarrow,from=doma,to=coda,"\beta",shorten >=.15in,shorten <=.15in]
\end{tikzcd}
=
 \begin{tikzcd}[row sep=.07in]
 & |[alias=doma]|X\ar[dr,bend left=15,"M"] & \\
 S\ar[ur,bend left=15,"f"]\ar[dr,bend right=15,"g"'] & & \ca{V} \\
 & |[alias=coda]|Y\ar[ur,bend right=15,"N"']
 \arrow[Rightarrow,from=doma,to=coda,"\alpha",shorten >=.1in,shorten <=.1in]
 \end{tikzcd}
 \end{equation}
Componentwise, this means that $\alpha_s=\beta_{u s}:M_{fs}\to N_{gs}$ in $\Vv$. Notice that a 2-cell is invertible if and only if the underlying map of spans is invertible.
Vertical composition %$^f\alpha^g\Rightarrow ^h\beta^k\Rightarrow ^\ell\gamma^m$
is performed by the usual composition of maps of spans,
% Christina: should follow!
%and the factorization
%\begin{displaymath}
%\begin{tikzcd}[row sep=.15in]
%&&& |[alias=doma]|X\ar[dr,"M"] & \\
%S\ar[r]\ar[urrr,bend left,"f"]\ar[drrr,bend right,"g"'] & T\ar[r]\ar[urr,bend left=15,"h"]
%\ar[drr,bend right=15,"k"'] &
%U\ar[ur,"\ell"]\ar[dr,"m"'] && \ca{V} \\
%&&& |[alias=coda]|Y\ar[ur,"N"'] &
%\arrow[Rightarrow,from=doma,to=coda,"\gamma",shorten >=.15in,shorten <=.15in]
%\end{tikzcd}
%=
%\begin{tikzcd}[row sep=.1in]
%& |[alias=doma]|X\ar[dr,bend left=15,"M"] & \\
%S\ar[ur,bend left=15,"f"]\ar[dr,bend right=15,"g"'] & & \ca{V} \rlap{\ ,}\\
%& |[alias=coda]|Y\ar[ur,bend right=15,"N"']
%\arrow[Rightarrow,from=doma,to=coda,"\alpha",shorten >=.1in,shorten <=.1in]
%\end{tikzcd}
%\end{displaymath}
%and the identity 2-cell is given by the identity map of spans.
and horizontal composition $M_X\xrightarrow{\alpha}N_Y\xrightarrow{\gamma}O_Z$ of 1-cells is obtained via the pasted composite
of natural transformations
\begin{equation}\label{SpanVcomposition}
 \begin{tikzcd}[row sep=.05in,column sep=.5in]
 & & |[alias=doma]|X\ar[ddr,bend left,"M"description] & \\
 & S\ar[ur,"f"]\ar[dr,"g"'] & & \\
 S{\times_{Y}}Q\ar[ur,"\pi_1"]\ar[dr,"\pi_2"']\ar[rr,phantom, near start,"\ucorner"description] & &
 |[alias=coda]|Y\ar[r,"N"description]
 & \ca{V} \\
 & Q\ar[ur,"m"]\ar[dr,"n"'] & & \\
 & & |[alias=codb]|Z\ar[uur,bend right,"O"description] &
 \arrow[Rightarrow,from=doma,to=coda,"\alpha",shorten >=.1in,shorten <=.1in]
 \arrow[Rightarrow,from=coda,to=codb,"\gamma",shorten >=.1in,shorten <=.1in]
 \end{tikzcd}
\end{equation} 
given by $\Vv$-arrows $(\gamma\circ\alpha)_{(s,q)}\colon
M_{fs}\xrightarrow{\alpha_s}N_{gs}\xrightarrow{=}N_{mq}\xrightarrow{\beta_{q}}O_{nq}$ for any
$(s,q)\in S{\times_Y}Q$.
The identity $1_M\colon M_X\to M_X$ is given by the identity span and natural transformation.
% \begin{displaymath}
% \begin{tikzcd}[row sep=.05in]
% & |[alias=doma]|X\ar[dr,bend left=15,"M"] & \\
% X\ar[ur,bend left=15,"\id_X"]\ar[dr,bend right=15,"\id_X"'] & & \ca{V} \\
% & |[alias=coda]|X\ar[ur,bend right=15,"M"']
% \arrow[Rightarrow,from=doma,to=coda,"1_M",shorten >=.1in,shorten <=.1in]
% \end{tikzcd}
% \end{displaymath}
% by $M_x\xrightarrow{\id} M_{x}$ in $\ca{V}$.
Finally, horizontal composition of 2-cells in $\Span|\Vv$ follows from horizontal composition of 2-cells in $\Span$: in detail,
\begin{displaymath}
\phi*\psi=
\begin{tikzcd}[column sep=.6in]
M_X\ar[r,bend left,"\alpha"]\ar[r, bend right, "\beta"']\ar[r,phantom,"\Downarrow{\scriptstyle\phi}"description]
& N_Y\ar[r,bend left,"\gamma"]\ar[r, bend right, "\delta"']\ar[r,phantom,"\Downarrow{\scriptstyle\psi}"description] & O_Z
\end{tikzcd}
\end{displaymath}
is given by the map of spans $w$ below,
\begin{equation}\label{hcomp2cellsSpanV}
\begin{tikzcd}[row sep=.1in]
&&&& |[alias=a]|X\ar[rrdd,bend left,"M"] && \\
& S\ar[rrru,bend left=15,"f"description]\ar[rr,"u"description] &&
T\ar[ur,"h"description]\ar[dr,"k"description] &&& \\
{\scriptstyle\blacksquare}%\ar[rr,phantom, near start,"\ucorner"description]
\ar[rr,"w"description]\ar[ur,dashed]\ar[dr,dashed] && 
\bullet
%\ar[rr,phantom, near start,"\ucorner"description]
\ar[ur,dashed]\ar[dr,dashed] && |[alias=b]|Y\ar[rr,"N"description]\ar[from=lllu,bend right=7,"g"description] && \Vv \\
& Q\ar[rrru,bend left=7,"m"description]\ar[rrrd,bend right=15,"n"description]\ar[rr,"v"description] &&
P\ar[ur,"r"description]\ar[dr,"l"description] &&& \\
&&&& |[alias=c]|Z\ar[rruu,bend right,"O"'] &&
\Twocell{a}{b}{1em}{"\beta"}
\Twocell{b}{c}{1em}{"\delta"}
\end{tikzcd}
\end{equation}
where ${\scriptstyle\blacksquare}$ is the pullback of $g$ and $m$ and $\bullet$ is the pullback of $r$ and $k$,
and the factorization of $\gamma\circ\alpha$ through $\delta\circ\beta$ is
\begin{displaymath}
\gamma_q\circ\alpha_s=(\gamma\circ\alpha)_{(s,q)}=(\delta\circ\beta)_{w(s,q)}=(\delta\circ\beta)_{(us,vq)}=\delta_{vq}\circ\beta_{us}
\end{displaymath}
for any $(s,q)$ in the square pullback, since $\alpha_s=\beta_{us}$ and
$\gamma_q=\delta_{vq}$ by $\phi$ and $\psi$.

With the above data, $\Span|\Vv$ becomes a bicategory. Notice that $\Span|(\Vv^\op) \cong (\Span|\Vv)^\op$.

\begin{remark}\label{2cellsequality}
By definition of a 2-cell in $\Span|\Vv$, namely a map of spans which satisfies a factorization property~\cref{2cellsSpanV}, it follows that two 2-cells are equal if and only if their underlying maps of spans in $\Span$ are equal; the accompanying factorizations are conditions that already hold. % since these two 2-cells are comparable in $\Span|\Vv$.
This fact is very useful when verifying axioms including 2-cells in subsequent sections.
\end{remark}

Since $\ca{V}$ is a monoidal 2-category (with trivial 2-cells), the bicategory $\Span|\Vv$ has an induced monoidal structure as follows.
% given by a pseudofunctor $\otimes\colon\Span|\Vv\times\Span|\Vv\to\Span|\Vv$
On objects, the tensor product $M_X\otimes N_Y=(M\otimes N)_{X\times Y}$ is given by $(M\otimes N)_{x,y}=M_x\otimes N_y$ in $\ca{V}$, namely the composite 
$$
X\times Y\xrightarrow{M\times N}\ca{V}\times\ca{V}\xrightarrow{\otimes\times\otimes}\ca{V}.
$$
%, with
%a functor between the hom-categories
%\begin{displaymath}
% \otimes_{(M,Q),(N,P)}\colon\Span|\Vv(M,N)\times\Span|\Vv(Q,P)\to\Span|\Vv(M\otimes Q,N\otimes P)
%\end{displaymath}
For 1-cells, $^f\alpha^g\otimes {^k\beta^l}$ is of the form
 \begin{equation}\label{tensor1cells}
 \begin{tikzcd}[row sep=.05in]
 & |[alias=doma]|X\times Z\ar[dr,bend left=15,"M\otimes Q"] & \\
 S\times T\ar[ur,bend left=15,"f\times k"]\ar[dr,bend right=15,"g\times l"'] & & \ca{V} \\
 & |[alias=coda]|Y\times W\ar[ur,bend right=15,"N\otimes P"']
 \arrow[Rightarrow,from=doma,to=coda,"\alpha\otimes\beta",shorten >=.1in,shorten <=.1in]
 \end{tikzcd} 
 \end{equation}
given by morphisms $M_{fs}\otimes Q_{kt}\xrightarrow{\alpha_s\otimes\beta_t}N_{gs}\otimes R_{lt}$ in $\Vv$.
For 2-cells, it is given by the product of the factorizing functions (maps of spans) in~\cref{2cellsSpanV}, and
the monoidal unit is $I_\ca{V}\colon\mathbf{1}\to\ca{V}$ that picks out the unit in $\ca{V}$. Finally, 
if $(\Vv,\otimes,I,\braid)$ is braided, then so is $\Span|\Vv$ via $\bar{\braid}_{MN}\colon M_X\ot N_Y\to N_Y\ot M_X$ given by
 \begin{displaymath}
 \begin{tikzcd}[row sep=.05in]
 & |[alias=doma]|X\times Y\ar[dr,bend left=15,"M\otimes N"] & \\
 X\times Y\ar[ur,bend left=15,"\id"] \ar[dr,bend right=15,"\mathrm{sw}"'] & & \ca{V} \\
 & |[alias=coda]|Y\times X\ar[ur,bend right=15,"N\otimes M"']
 \arrow[Rightarrow,from=doma,to=coda,"\bar\braid_{MN}",shorten >=.1in,shorten <=.1in]
 \end{tikzcd} 
 \end{displaymath}
with components $M_x \otimes N_y \xrightarrow{\braid_{M_x,N_y}} N_y \otimes M_x$ in $\Vv$.

\subsection{The underlying span functor}\label{sec:functorU}

There is an evident strict functor of bicategories $U\colon\Span|\Vv\to\Span$ that forgets the data associated with $\Vv$.
In more detail, $U$ maps a 0-cell
$A_X$ to the set $X$, a 1-cell $^f\alpha^g$ to the span $\spn{X}{S}{Y}{f}{g}$ and a 2-cell $\phi_u$ to 
the map of spans $u$; it is clear that the composition is preserved on the nose. 
This functor is strict monoidal, since
\begin{equation}\label{eq:Ustrict}
U(A_X)\otimes_\Span U(B_Y)=X\times Y=U(A_X\otimes_{\Span|\Vv} B_Y), \qquad U(I_1)=1
\end{equation}
and similarly for morphisms. Since $U$ is a (symmetric) strict monoidal strict functor of bicategories, \cref{prop:Fpreserves} ensures that 
any pseudomonoid or pseudocomonoid object in $\Span|\Vv$ has an underlying pseudo(co)monoid in $\Span$ 
which by the definitions is part of the structure; e.g. if $A_X$ is a pseudomonoid in $\Span|\Vv$, $X$ must be a pseudomonoid in $\Span$.

In fact, this strict functor of bicategories is a kind of \emph{2-opfibration}, see \cite{Mitch2fibrations}.
Briefly, therein the usual Grothendieck construction (see \cite[\S 8]{Handbook2}) extends
to give a correspondence between 2-functors $F\colon\caa{X}^\op\to{2\mhyphen\Cat}$
and 2-fibrations $\int F \to \caa{X}$, and a similar correspondence between trihomomorphisms from a bicategory into $\mathsf{Bicat}$ and fibrations of bicategories.
The covariant case is not explicitly described therein, but is easily obtained by modifying the contravariant construction.
In the case of a covariant $F$, the resulting functor is a 2-opfibration and the 2-category $\int F$ has objects $(X,A{\in}FX)$, morphisms
pairs
$(f\colon X{\to}Y\in\caa{X},\alpha\colon Ff(A){\to}B\in FY)$ and 2-cells $(f,\alpha)\Rightarrow(g,\beta)$
\begin{equation}\label{2cellsGroth}
\begin{tikzcd}[column sep=.8in]
X \ar[r,bend left=25,"f"]\ar[r,bend right=25,"g"']\ar[r,phantom,"\Downarrow{\scriptstyle\phi}"description] & Y
\end{tikzcd}\textrm{ in }\caa{X}\textrm{ such that }
\begin{tikzcd}[row sep=.2in]
Ff(A)\ar[dr,"\alpha"]\ar[d,"(F\alpha)_A"']\ar[dr,phantom,bend right=20,"="] & \\
Fg(A)\ar[r,"\beta"'] & B
\end{tikzcd} \textrm{ in } FY. 
\end{equation}
The work of Buckley also omits any version of this construction where $F$ is not as strict as it can be (a 2-functor from a 2-category into ${2\mhyphen\Cat}$) or as weak as it can be (a trihomomorphism from a bicategory into $\mathsf{Bicat}$), but those intermediate versions can be generated by adapting the construction accordingly. In our case, we need the covariant version where $\caa{X}$ is a bicategory and $F$ is a pseudofunctor into $2\mhyphen\Cat$, when $\int F$ becomes a bicategory and $\int F\to\caa{X}$ a strict functor.

\begin{theorem}\label{prop:fibration}
If $\Vv$ has colimits, $U\colon\Span|\Vv\to\Span$ is a 2-opfibration.
\end{theorem}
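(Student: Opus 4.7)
The plan is to realize $U$ as arising from the Grothendieck construction described in the excerpt, by exhibiting a pseudofunctor $F\colon \Span \to 2\mhyphen\Cat$ together with an isomorphism $\int F \cong \Span|\Vv$ over $\Span$; the conclusion then follows from the result of \cite{Mitch2fibrations} that every such $\int F \to \Span$ is a 2-opfibration.

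First I would define $F$ on a set $X$ by $F(X) := \Vv^X$, the category of families $\{M_x\}_{x\in X}$ in $\Vv$, viewed as a 2-category with only identity 2-cells. For a span $\spn{X}{S}{Y}{f}{g}$ I set
\[
F(S)(M)_y \; := \; \coprod_{s \in g^{-1}(y)} M_{fs},
\]
which exists since $\Vv$ has colimits. For a map of spans $u\colon S\to T$ satisfying $hu=f$ and $ku=g$, the natural transformation $F(u)_M$ has components induced by the restriction $u\colon g^{-1}(y)\to k^{-1}(y)$ together with the identities $M_{fs}=M_{h(us)}$. The main verification here is pseudofunctoriality: for composable spans with composite $S\times_Y Q$, a canonical iso $F(Q)\circ F(S)(M)_z \cong F(S\times_Y Q)(M)_z$ is obtained by writing both sides as coproducts indexed by the fibre of $\pi_2\circ n$ above $z$, using the universal property; the unit constraint $F(\id_X)(M)\cong M$ comes from the canonical iso $\coprod_{x'=x}M_{x'}\cong M_x$, and the pentagon and triangle axioms reduce to uniqueness of maps out of colimits.

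Next I would exhibit the isomorphism $\int F \cong \Span|\Vv$ over $\Span$. On objects both are pairs $(X,M)$. A 1-cell $(S,\alpha)$ in $\int F$ has $\alpha\colon F(S)(M)\to N$ in $\Vv^Y$; by the universal property of the fibrewise coproduct, such an $\alpha$ is equivalently a family $\{\alpha_s\colon M_{fs}\to N_{gs}\}_{s\in S}$, which is exactly a 1-cell of $\Span|\Vv$ in the notation of~\cref{1cell2}. A 2-cell of $\int F$, as in \cref{2cellsGroth}, is a map of spans $u$ satisfying $\beta\circ F(u)_M = \alpha$; componentwise this reads $\alpha_s=\beta_{us}$, which is precisely the factorization of \cref{2cellsSpanV}, and by \cref{2cellsequality} 2-cells on both sides are determined by their underlying maps of spans. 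Horizontal and vertical composition match under the same correspondence once the pseudofunctor constraints of $F$ are identified with the composition data of $\Span|\Vv$ recalled in \cref{SpanVcomposition}.

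The main obstacle I expect is the coherence bookkeeping for the pseudofunctor $F$: one has to confirm that the composition and unit constraints constructed from canonical coproduct isomorphisms correspond, under the identification above, to the strict composition and identities in $\Span|\Vv$. Since composition in $\Span|\Vv$ is strictly associative and unitary once one has fixed representatives of pullbacks, this reduces to routine diagram chasing in $\Vv$ using only the universal property of colimits. Once $\int F \cong \Span|\Vv$ is established as a 2-functor over $\Span$, the 2-opfibration property of $U$ follows directly from the Grothendieck correspondence recalled before the statement.
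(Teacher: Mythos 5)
Your proposal is correct and follows essentially the same route as the paper: you construct a pseudofunctor $\Span\to 2\mhyphen\Cat$ sending $X$ to families over $X$ and a span to a fibrewise-coproduct functor, apply the 2-Grothendieck construction, and identify the result with $\Span|\Vv$ over $\Span$. The only cosmetic difference is that the paper packages your formula $\coprod_{s\in g^{-1}(y)}M_{fs}$ as the left Kan extension $\Lan_g(-\circ f)$ and derives the explicit coproduct description by a coend computation, using the adjunction $\Lan_u\dashv u_*$ to define the action on maps of spans — which agrees with your reindexing maps.
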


\begin{proof}
We build a pseudofunctor $\Span\to\Cat\hookrightarrow {2\mhyphen\Cat}$
and apply a modified Grothendieck construction from~\cite{Mitch2fibrations};
the resulting 2-opfibration is isomorphic to $U$.

Briefly recall that the left Kan extension of a functor $K\colon\ca{A}\to\ca{C}$ along some $F\colon\ca{A}\to\ca{B}$
is a functor $\Lan_FK\colon\ca{B}\to\ca{C}$ equipped with a natural transformation
$\eta_K\colon K\Rightarrow(\Lan_FK)\circ F$
such that $\Lan_F\dashv F_*=\mhyphen\circ F$ with unit $\eta$.
When $\Cc$ has colimits, the left Kan extension always exists and can be computed as the coend~\cite[~{}(4.25)]{Kelly}
$$\Lan_FK(b)\coloneqq\int^{a\in\ca{A}}\ca{B}(Fa,b)\cdot Ka$$
where $\cdot$ denotes the set-theoretic copower. 

Define a pseudofunctor $\Span\to\Cat$ which maps $X$ to $[X,\Vv]$,
a span $\spn{X}{S}{Y}{f}{g}$ to $\Lan_g(-\circ f)\colon[X,\Vv]\to[Y,\Vv]$
and a map of spans $u\colon(f,g)\to(h,k)$ \cref{mapofspans} to a natural transformation $\bar{u}$ whose components are
\begin{equation}\label{baru}
\bar{u}_A\colon\Lan_g(Af)=\Lan_g(Ahu)\cong\Lan_k(\Lan_u(Ahu))\xrightarrow{\Lan_k(\varepsilon_{Ah})}\Lan_k(Ah)
\end{equation}
for all $A\in[X,\Vv]$, where the isomorphism is a standard property for Kan extensions of composites~\cite[~{}(4.48)]{Kelly}.
This action on 1- and 2-cells defines a functor for all $X,Y$ in $\Span$.
For pseudofunctoriality, we need natural isomorphisms $\Lan_{\id}(-\circ\id)=\id_{[X,\Vv]}$
and $\Lan_k(\Lan_g(-\circ f)\circ h)\cong\Lan_{k\pi_2}(-\circ f\pi_1)$ for every pullback
of spans
\begin{displaymath}
\begin{tikzcd}[column sep = 0.3cm,row sep=.3cm]
& & Q \ar[dr,dashed,"\pi_2"] \ar[dl,dashed,"\pi_1"'] \ar[dd,phantom, near start,"\upback"description] & & \\
& S\ar[dl,"f"']\ar[dr,"g"] & & T\ar[dl,"h"']\ar[dr,"k"] & \\
X\ar[rr,tick] & & Y\ar[rr,tick] & & Z
\end{tikzcd}
\end{displaymath}
The identity is straightforward, while the second isomorphism arises as the composite
\begin{displaymath}
\begin{tikzcd}[row sep=.3in,column sep=1in]
{[X,\Vv]}\ar[r,"{f_*}"]\ar[dr,"{(f\pi_1)_*}"']
\ar[dr,phantom,bend left=10,"="description]
& {[S,\Vv]}\ar[r,shift left=2,"\Lan_g"]
\ar[r,phantom,"{\scriptstyle\bot}"description]\ar[d,"{(\pi_1)_*}"']\ar[dr,phantom,"\cong"description]
& {[Y,\Vv]}\ar[l,shift left=2,"{g_*}"]\ar[d,"{h_*}"] \\
& {[Q,\Vv]}\ar[r,shift left=2,"{\Lan_{\pi_2}}"]\ar[r,phantom,"{\scriptstyle\bot}"description]
\ar[dr,"{\Lan_{k\pi_2}}"']\ar[dr,phantom,bend left=15,near end,"\cong"description]&
{[T,\Vv]}\ar[l,shift left=2,"{(\pi_2)_*}"]\ar[d,"{\Lan_k}"] \\
&& {[Z,\Vv]}
\end{tikzcd}
\end{displaymath}
where the middle isomorphism is realised via the following coends computation,
for any $B\in[S,\ca{V}]$:
\begin{align*}
\Lan_gB(ht) &=\int^{s{\in}S}Y(gs,ht)\cdot Bs \cong \int^{\substack{s{\in}S\\ gs=ht}} Bs=
\sum_{gs=ht}Bs\qquad (Y,S\textrm{ are sets})\\
\Lan_{\pi_2}(B\pi_1)t &=\int^{(s,t'){\in}Q}T(\pi_2(s,t'),t)\cdot(B\pi_1)(s,t') \cong
\int^{\substack{(s,t'){\in}Q \\t=t' }}Bs=\sum_{gs=ht}Bs.
\end{align*}
%Notice that $Y$ and $T$ are sets, i.e.\ discrete categories, therefore their hom-sets are either empty
%or singletons. Due to the form of coends in this case, they reduce to sum formulae. 
The axioms for pseudofunctoriality can be shown to hold using the universal properties of these coends. 
If we compose this functor with the usual 2-functor $\Cat \to 2\mhyphen\Cat$ and apply the Grothendieck construction, we obtain the bicategory whose objects are $(X,A\in[X,\Vv])$, 1-cells are pairs
\begin{equation}\label{Groth1cell}
\begin{cases}
\Lan_g(Af)\xrightarrow{\alpha}B & \textrm{in } [Y,\Vv] \\
\spn{X}{S}{Y}{f}{g}  & \textrm{in } \Span
\end{cases}
\end{equation}
and 2-cells are,
%cref{2cellsGroth}, 
for $u$ as in \cref{mapofspans} and $\bar{u}$ as in \cref{baru},
\begin{equation}\label{Groth2cell}
(f,g)\xrightarrow{u}(h,k)\textrm{ in }\Span\textrm{ such that }
\begin{tikzcd}[row sep=.2in]
\Lan_g(Af)\ar[dr,"\alpha"]\ar[d,"\bar{u}_A"'] & \\
\Lan_k(Ah)\ar[r,"\beta"'] & B
\end{tikzcd}
\textrm{ in } [Y,\Vv].
\end{equation}
This bicategory has exactly the same objects as $\Span|\Vv$, and there is an isomorphism
between their hom-categories: the necessary bijections between \cref{Groth1cell} and \cref{1cell2},
\cref{Groth2cell} and \cref{2cellsSpanV} are exactly the universal property of left Kan extension.
This isomorphism commutes with the projections to $\Span$ and thus makes $U\colon\Span|\Vv\to\Span$ a 2-opfibration.
\end{proof}
The above theorem shows that the definition of the bicategory $\Span|\Vv$ in fact arises from a series of quite natural constructions.

\section{Hopf \texorpdfstring{$\Vv$}{V}-categories as oplax Hopf monoids}\label{sec:FrobeniusHopfVCatsSpanV}

In this section, we recall the notion of Hopf $\Vv$-category~\cite{BCV} for a braided monoidal category $\Vv$, and we realize them as oplax Hopf monoids in the monoidal bicategory $\Span|\Vv$, described respectively in \cref{sec:oplaxHopf,sec:SpanV}. This allows us to understand exactly how such a concept can be described as a `Hopf object' internal to some monoidal structure, generalizing the classical setting via this relaxed notion of a Hopf monoid in higher dimensions.

Our notation in what follows uses Latin letters to denote `global' operations that relate hom-objects of different indices (like category composition),
and Greek letters to denote `local' operations that relate hom-objects of fixed indices (like monoid multiplication).

\subsection{Hopf \texorpdfstring{$\ca{V}$}{V}-categories}\label{Hopfcats}

Intuitively, a category can be thought of as a many-object generalization
of a monoid.
Since a Hopf monoid --- internal to any braided monoidal category --- has both a monoid and a comonoid structure,
it is reasonable to ask what its many-object generalization should be.
A natural answer to this question is as follows.

\begin{definition}\cite[\S 2]{BCV}\label{def:sHopfcat}
If $(\Vv,\otimes,I,\braid)$ is braided, a \emph{semi-Hopf $\Vv$-category} $H$
is a $\Comon(\Vv)$-enriched category. Explicitly, it consists of a collection of objects $H_0$ and for
every $x,y\in H_0$ an object $H_{x,y}$ of $\Vv$, together with families of morphisms in $\Vv$
\begin{gather*}
\mlt_{xyz}\colon H_{x,y}\otimes H_{y,z}\to H_{x,z}\qquad \uni_x\colon I\to H_{x,x} \\
\lcomlt_{ab}\colon H_{a,b}\to H_{a,b}\otimes H_{a,b}\qquad \lcouni_{ab}\colon H_{a,b}\to I 
\end{gather*}
which make $ H$ a $\Vv$-category, each $ H_{x,y}$ a comonoid in $\Vv$, and
satisfy
\begin{equation}
\begin{gathered}
\xymatrix@C=.6in{
 H_{x,y}\otimes  H_{y,z} \ar[r]^-{\lcomlt_{xy}\otimes\lcomlt_{yz}} \ar[dd]_{\mlt_{xyz}} &  H_{x,y}\otimes H_{x,y}\otimes
 H_{y,z}\otimes H_{y,z} \ar[d]^{ 1\otimes\braid\otimes1} \\ 
&  H_{x,y}\otimes H_{y,z}\otimes H_{x,y}\otimes H_{y,z} \ar[d]^{\mlt_{xyz}\otimes\mlt_{xyz}} \\ 
 H_{x,z} \ar[r]_{\lcomlt_{xz}} &  H_{x,z}\otimes H_{x,z} }\label{Hax1} \\
\xymatrix{
I \ar[r]^{\sim} \ar[d]_-{\uni_{x}} & I\otimes I \ar[d]^-{\uni_{x}\otimes\uni_{x}} \\ 
 H_{x,x} \ar[r]_-{\lcomlt_{xx}} &  H_{x,x}\otimes H_{x,x} } \quad
\xymatrix@C=.5in{
 H_{x,y}\otimes H_{y,z} \ar[r]^-{\lcouni_{xy}\otimes\lcouni_{yz}} \ar[d]_-{\mlt_{xyz}} & I\otimes I
\ar[d]^-{\sim} \\ 
 H_{x,z} \ar[r]_-{\lcouni_{xz}} & I }\quad
\xymatrix{
I \ar[r]^{\id} \ar[d]_-{\uni_{x}} & I \ar[d]^{\id} \\ 
 H_{x,x} \ar[r]_-{\lcouni_{xx}} & I. }
\end{gathered}
\end{equation}
\end{definition}

Semi-Hopf $\Vv$-categories together with $\Comon(\Vv)$-functors and $\Comon(\Vv)$-natural tran\-sformations
form the 2-category $\Comon(\Vv)\mhyphen\Cat$, denoted by $\sHopfCat{\Vv}$.

\begin{examples}\hfill
\begin{enumerate}
 \item Every bimonoid in a braided monoidal category $\Vv$ can be viewed as a 1-object semi-Hopf $\Vv$-category,
therefore semi-Hopf categories are indeed many-object generalizations of bialgebras.
\item For any cartesian monoidal category $\ca{V}$, where by default $\Comon(\Vv)\cong\Vv$, any $\Vv$-enriched category is automatically a semi-Hopf $\Vv$-category.
\item In~\cite{Measuringcomonoid}, it is established that when $\ca{V}$ is a locally presentable braided monoidal closed category, the category of monoids $\Mon(\Vv)$ is enriched in $\Comon(\Vv)$ via the existence of \emph{universal measuring comonoids}, first introduced by Sweedler in~\cite{Sweedler}. Therefore $\Mon(\Vv)$ is a semi-Hopf $\Vv$-category. The assumptions on $\Vv$ are satisfied by a wide class of examples, like vector spaces, modules over a commutative ring, chain complexes over a commutative ring, Grothendieck toposes etc.
\end{enumerate}
\end{examples}

\begin{definition}\cite[Def.~2.3]{BCV}\label{def:Hopfcat}
A \emph{Hopf $\Vv$-category} is a semi-Hopf $\Vv$-category equipped with a family of maps
$\atpd_{xy} \colon  H_{x,y} \to  H_{y,x}$ in $\Vv$ satisfying
\begin{equation}\label{HopfCatAntipodeEquations}
\begin{gathered}
\cd[@C-2em]{
 &  H_{x,y}\otimes H_{x,y} \ar[rr]^{ H_{x,y} \otimes \atpd_{xy}} & &  H_{x,y}\otimes H_{y,x} \ar[dr]^{\mlt_{xyx}} & \\
  H_{x,y} \ar[rr]^{\lcouni_{xy}} \ar[ur]^{\lcomlt_{xy}} & & I \ar[rr]^{\uni_{x}} & &  H_{x,x} 
} \\ 
\cd[@C-2em]{
 &  H_{x,y}\otimes H_{x,y} \ar[rr]^{\atpd_{xy} \otimes H_{x,y}} & &  H_{y,x}\otimes H_{x,y} \ar[dr]^{\mlt_{yxy}} & \\
  H_{x,y} \ar[rr]^{\lcouni_{xy}} \ar[ur]^{\lcomlt_{xy}} & & I \ar[rr]^{\uni_{y}} & &  H_{y,y} \rlap{\ .}
} 
\end{gathered}
\end{equation}
Such an identity-on-objects
$\Vv$-graph map $\atpd\colon H\to H^\op$ is called the \emph{antipode} of $H$.
\end{definition}

If $H$ and $K$ are Hopf $\Vv$-categories, a $\Comon(\Vv)$-functor $F\colon H\to K$
is called a \emph{Hopf $\Vv$-functor} if $\atpd_{f(x)f(y)}\circ F_{xy}
=F_{yx}\circ\atpd_{xy}$ for all $x,y\in X$. In fact,
any $\Comon(\Vv)$-functor automatically satisfies that condition~\cite[{}2.10]{BCV}; hence we have a full $2$-subcategory
$\HopfCat{\Vv}$ of $\sHopfCat{\Vv}$.

\begin{examples}\hfill
\begin{enumerate}
 \item Every Hopf algebra $H$ in a braided monoidal $\Vv$ is a 1-object Hopf $\Vv$-category; 
%this fulfils its purpose as a many-object generalization. 
in particular, each `diagonal' hom-object $H_{x,x}$ of a Hopf $\Vv$-category $H$ is a Hopf monoid in $\Vv$.
\item A Hopf $\Set$-category is precisely a groupoid; in general, for a cartesian monoidal $\Vv$, a Hopf $\Vv$-category is a \emph{$\Vv$-groupoid},
namely a $\Vv$-category whose hom-objects are equipped with inversion maps.
\item If we replace $\Vv$ with $\Vv^\op$,
we obtain the notion of a \emph{(semi-)Hopf $\Vv$-opcategory}, called
\emph{dual} Hopf category in \cite{BCV}. Since $\Comon(\Vv^\op)\cong\Mon(\Vv)^\op$,
a semi-Hopf $\Vv$-opcategory $(C,\comlt,\couni,\lmlt,\luni)$
is precisely a $\Mon(\Vv)$-opcategory, i.e.\ is equipped with `global' cocomposition
and coidentity morphisms $\comlt_{xyz},\couni_{x}$ as in \cref{Vopcat}, together with `local' multiplication
and unit morphisms $\lmlt_{xy}\colon C_{x,y}\ot C_{x,y}\to C_{x,y}$, $\luni_{xy}\colon I\to C_{x,y}$
making each hom-object a monoid in $\Vv$, subject to compatibility conditions.
Moreover, a Hopf $\Vv$-opcategory comes with arrows $\atpd_{xy}\colon C_{y,x}\to C_{x,y}$ satisfying dual axioms to \cref{HopfCatAntipodeEquations}.
\end{enumerate}
\end{examples}

\begin{comment}
\begin{displaymath}
\begin{tikzcd}
C_{x,y}\ot C_{y,x}\ar[rr,"1\ot\atpd_{xy}"] && C_{x,y}\ot C_{x,y}\ar[d,"\lmlt_{xy}"]  \\
C_{x,x}\ar[u,"\comlt_{xyx}"]\ar[r,"\couni_x"] & I\ar[r,"\luni_{xy}"] & C_{x,y}
\end{tikzcd}\quad
\begin{tikzcd}
C_{x,y}\ot C_{y,x}\ar[rr,"\atpd_{yx}\ot1"] && C_{y,x}\ot C_{y,x}\ar[d,"\lmlt_{yx}"]  \\
C_{x,x}\ar[u,"\comlt_{xyx}"]\ar[r,"\couni_x"] & I\ar[r,"\luni_{yx}"] & C_{y,x}
\end{tikzcd}
\end{displaymath}
\end{comment}

\begin{remark}
 Hopf $\Vv$-categories can be realized as \emph{Hopf monads} inside two different monoidal bicategories as follows.
In~\cite[\S 4.8]{Gabipolyads}, Hopf $\Vv$-categories are captured as certain Hopf monads in $\Span|\Vv$ (those with indiscrete underlying category), for a one-object monoidal bicategory $\Vv$. On the other hand, Hopf $\Vv$-categories are in bijection to Hopf monads in $\Vv$-$\Mat$, the bicategory of $\Vv$-matrices; the relevant structure is sketched in \cite[Remark 4.8]{VCocats}.
\end{remark}

\subsection{Oplax Hopf monoid structure}
In the following theorem, we summarize the main results of this section regarding oplax bimonoid and Hopf monoid structures on an object in $\Span|\Vv$. Recall that by \cref{prop:Fpreserves,prop:Fpreservesatpd}, the strict monoidal structure \cref{eq:Ustrict} of the forgetful functor $U\colon\Span|\Vv\to\Span$ ensures that 
the underlying set of an oplax Hopf monoid in $\Span|\Vv$ is an oplax Hopf monoid in $\Span$; the relevant structures on $X^2$ were described in \cref{sec:X2structures}.

\begin{theorem}\label{thm:centralthm}
Let $\Vv$ be a braided monoidal category, and $X$ any set.
\begin{enumerate}%[(i)]
\item A pseudomonoid (resp.\ pseudocomonoid) in $\Span|\Vv$ over the groupoid pseudomonoid
(resp.\ pseudocomonoid) $X^2$ in $\Span$ is exactly a $\Vv$-category (resp.\ $\Vv$-opcategory) with objects $X$.
\item A monoid (resp.\ comonoid) in $\Span|\Vv$ over the trivial monoid (resp.\ comonoid)
$X^2$ in $\Span$ is exactly a $\Mon(\Vv)$-graph (resp.\ $\Comon(\Vv)$-graph) with objects $X$.
\item An oplax bimonoid in $\Span|\Vv$ over the (groupoid pseudomonoid, trivial comonoid)
oplax bimonoid $X^2$ in $\Span$ is exactly a semi-Hopf $\Vv$-category.
\item An oplax bimonoid in $\Span|\Vv$ over the (trivial monoid, groupoid pseudocomonoid) oplax bimonoid
$X^2$ in $\Span$ is exactly a semi-Hopf $\Vv$-opcategory.
\item An oplax Hopf bimonoid in $\Span|\Vv$ over the (groupoid pseudomonoid, trivial comonoid)
oplax Hopf bimonoid $X^2$ in $\Span$ is exactly a Hopf $\Vv$-category.
\item An oplax Hopf bimonoid in $\Span|\Vv$ over the (trivial monoid, groupoid pseudocomonoid)
oplax Hopf bimonoid $X^2$ in $\Span$ is exactly a Hopf $\Vv$-opcategory.
\end{enumerate}
\end{theorem}

%The word ``over'' means ``whose underlying set is''.
Similar results are obtained for the morphisms between such objects, leading to isomorphisms of the corresponding categories. We provide proofs of some parts of the theorem below, and the rest follow analogously.

\begin{proposition}\label{Vopcatsascomonoids}
A pseudocomonoid in $\Span|\Vv$ over the set $X^2$ with the groupoid pseudocomonoid structure in $\Span$ is precisely a $\ca{V}$-opcategory
with set of objects $X$. Dually, a pseudomonoid in $\Span|\Vv$ over the set $X^2$
with the groupoid pseudomonoid structure in $\Span$ is a $\Vv$-category. 
\end{proposition}
\begin{proof}
Consider a pseudocomonoid $C_{X^2}$ in $\Span|\Vv$, with underlying pseudocomonoid $X^2$ in $\Span$ equipped with the reversed
\cref{groupoidmonoidstr}. It consists of an object
$C\colon X^2\to\Vv$, i.e.\ a family of objects $\{C_{x,y}\}_{x,y\in X}$ in $\ca{V}$, along with
$\comlt\colon C\otimes C\to C$ and $\couni\colon I\to C$ in $\Span|\Vv$, i.e.\ natural transformations
\begin{displaymath}
\begin{tikzcd}[row sep=.1in]
& |[alias=doma]|X^2\ar[dr,bend left=15,"C"] & \\
 X^3\ar[ur,bend left=15,"\pi_{13}"]\ar[dr,bend right=15,"1\times\diag\times1"'] & & \ca{V}, \\
 & |[alias=coda]|X^4\ar[ur,bend right=15,"C\otimes C"']
 \arrow[Rightarrow,from=doma,to=coda,"\comlt",shorten >=.1in,shorten <=.1in] 
\end{tikzcd}\quad
\begin{tikzcd}[row sep=.1in]
& |[alias=doma]|X^2\ar[dr,bend left=15,"C"] & \\
 X\ar[ur,bend left=15,"\diag"]\ar[dr,bend right=15,"!"'] & & \ca{V} \\
 & |[alias=coda]|1\ar[ur,bend right=15,"I"']
 \arrow[Rightarrow,from=doma,to=coda,"\couni",shorten >=.1in,shorten <=.1in] 
\end{tikzcd} \textrm{ with components}
\end{displaymath}
\begin{displaymath}
\comlt_{xyz}\colon C_{x,z}\to C_{x,y}\otimes C_{y,z}\qquad\textrm{ and }\qquad
\couni_{x}\colon C_{x,x}\to I.
\end{displaymath}
The coassociativity isomorphism $\beta$ dual to \cref{alphalambdarho}, using the composition formula~\cref{SpanVcomposition}, amounts to
\begin{displaymath}
 \begin{tikzcd}[row sep=.1in,column sep=.25in]
 & & |[alias=doma]|X^2\ar[ddrr,bend left,"C"description] & & \\
 & X^3\ar[ur,"\pi_{13}"]\ar[dr,"1\diag1"'] & && \\
 P\cong X^4\ar[ur,dashed,"\pi_{124}"]
 \ar[dr,dashed,"1\Delta11"']
 \ar[rr,phantom, near start,"\ucorner"description] & & |[alias=coda]|X^4\ar[rr,"C\otimes C"] & &\ca{V} \\
 & X^2\times X^3\ar[ur,"1^2\pi_{13}"]\ar[dr,"1^3\diag1"'] & & \\
 & & |[alias=codb]|X^6\ar[uurr,bend right,"C\otimes C\otimes C"description] &
 \arrow[Rightarrow,from=doma,to=coda,"\comlt",shorten >=.1in,shorten <=.1in]
 \arrow[Rightarrow,from=coda,to=codb,"1_C\otimes \comlt",shorten >=.1in,shorten <=.1in]
 \end{tikzcd}\stackrel{\beta}{\cong}
 \begin{tikzcd}[row sep=.1in,column sep=.25in]
 & & |[alias=doma]|X^2\ar[ddrr,bend left,"C"description] & & \\
 & X^3\ar[ur,"\pi_{13}"]\ar[dr,"1\diag1"'] & && \\
 Q\cong X^4\ar[ur,dashed,"\pi_{134}"]\ar[dr,dashed,"11\Delta1"']
 \ar[rr,phantom, near start,"\ucorner"description] & & |[alias=coda]|X^4\ar[rr,"C\otimes C"] & &\ca{V} \\
 & X^3\times X^2\ar[ur,"\pi_{13}1^2"]\ar[dr,"1\diag1^3"'] & & \\
 & & |[alias=codb]|X^6\ar[uurr,bend right,"C\otimes C\otimes C"description] &
 \arrow[Rightarrow,from=doma,to=coda,"\comlt",shorten >=.1in,shorten <=.1in]
 \arrow[Rightarrow,from=coda,to=codb,"\comlt\otimes1_C",shorten >=.1in,shorten <=.1in]
 \end{tikzcd}
\end{displaymath} 
%chose different representation than earlier, have in mind! Both are correct!
which is the invertible map of spans \cref{eq:coassociator}, satisfying the factorization 
\cref{2cellsSpanV} expressed by
\begin{gather*}
\left((1_C\otimes \comlt)\circ \comlt\right)_{xyzw}\colon C_{x,w}\xrightarrow{\comlt_{xyw}}C_{x,y}\otimes C_{y,w}\xrightarrow{1\otimes\comlt_{yzw}}
C_{x,y}\otimes C_{y,z}\otimes C_{z,w} \\
\left((\comlt\otimes1_C)\circ \comlt\right)_{xyzw}\colon C_{x,w}\xrightarrow{\comlt_{xzw}}C_{x,z}\otimes C_{z,w}\xrightarrow{\comlt_{xyz}\otimes1}
C_{x,y}\otimes C_{y,z}\otimes C_{z,w}
\end{gather*}
which is precisely the first $\ca{V}$-opcategory axiom,
see \cref{enrichedstuff}.
Analogously, from the counity isomorphisms we obtain the coidentity axioms for enriched opcategories.
%Christina: trying to reduce? someone could reproduce the thing below I think?
\begin{comment}
Similarly, the counity axioms e.g.
\begin{displaymath}
 \begin{tikzcd}[row sep=.05in,column sep=.25in]
 & & |[alias=doma]|X^2\ar[ddrr,bend left,"C"] & & \\
 & X^3\ar[ur,"\pi_{13}"]\ar[dr,"1\diag1"'] & && \\
 X^2\ar[ur,dashed,"1\Delta"]\ar[dr,dashed,"1\Delta"']
 \ar[rr,phantom, near start,"\ucorner"description] & & |[alias=coda]|X^4\ar[rr,"C\otimes C"] & &\ca{V}\cong \\
 & X^2\times X\ar[ur,"1^2\diag"]\ar[dr,"1^2!"'] & & \\
 & & |[alias=codb]|X^2\ar[uurr,bend right,"C\otimes I"'] &
 \arrow[Rightarrow,from=doma,to=coda,"\comlt",shorten >=.1in,shorten <=.1in]
 \arrow[Rightarrow,from=coda,to=codb,"1_C\otimes\couni",shorten >=.1in,shorten <=.1in]
 \end{tikzcd}
 \begin{tikzcd}[row sep=.1in]
 & |[alias=doma]|X\ar[dr,bend left=15,"C"] & \\
 X\ar[ur,bend left=15,"\id_X"]\ar[dr,bend right=15,"\id_X"'] & & \ca{V} \\
 & |[alias=coda]|X\ar[ur,bend right=15,"C"']
 \arrow[Rightarrow,from=doma,to=coda,"1_C",shorten >=.1in,shorten <=.1in]
 \end{tikzcd}
\end{displaymath}
comes down to $C_{x,y}\xrightarrow{\comlt_{xyy}}C_{x,y}\otimes C_{y,y}
 \xrightarrow{1\otimes\couni_{y}}C_{x,y}\otimes I\cong(1_{C})_{xy}$,
the coidentity $\ca{V}$-opcategory axiom.
\end{comment}
\end{proof}

%In particular, observe that such an object $A\colon X^2\to\Vv$ comes equipped with
%\begin{displaymath}
%\begin{tikzcd}[row sep=.1in]
%& |[alias=doma]|X^4\ar[dr,bend left=15,"A\otimes A"] & \\
%X^3\ar[ur,bend left=15,"1\times\diag\times1"]\ar[dr,bend right=15,"\pi_{13}"'] & & \ca{V} \\
%& |[alias=coda]|X^2\ar[ur,bend right=15,"A"']
%\arrow[Rightarrow,from=doma,to=coda,"\mlt",shorten >=.1in,shorten <=.1in] 
%\end{tikzcd}\textrm{ and}
%\begin{tikzcd}[row sep=.1in]
%& |[alias=doma]|1\ar[dr,bend left=15,"I"] & \\
%X\ar[ur,bend left=15,"!"]\ar[dr,bend right=15,"\diag"'] & & \ca{V} \\
%& |[alias=coda]|X^2\ar[ur,bend right=15,"A"']
%\arrow[Rightarrow,from=doma,to=coda,"\uni",shorten >=.1in,shorten <=.1in] 
%\end{tikzcd}
%\end{displaymath}
%whose components $\mlt_{xyz}\colon A_{x,y}\otimes A_{y,z}\to A_{x,z}$ and 
%$\uni_{x}\colon I\to A_{x,x}$ satisfy the usual associativity and unit condition for a $\ca{V}$-category.
On the level of morphisms between such structures, the following indicates the relation
of $\ca{V}$-functors and oplax morphisms between pseudomonoids in $\Span|\Vv$.

\begin{proposition}\label{prop:Vfunctors}
Suppose we have two pseudomonoids $A_{X^2}$ and $B_{Y^2}$ in $\Span|\Vv$ with the groupoid pseudomonoid structure on their underlying sets,
i.e.\ two $\ca{V}$-categories. An oplax pseudomonoid morphism 
in $\Span|\Vv$ of the form
\begin{equation}\label{Vfunctorcell}
\begin{tikzcd}[row sep=.1in]
& |[alias=doma]|X^2\ar[dr,bend left=15,"A"] & \\
 X^2\ar[ur,bend left=15,"\id"]\ar[dr,bend right=15,"f\times f"'] & & \ca{V} \\
 & |[alias=coda]|Y^2\ar[ur,bend right=15,"B"']
 \arrow[Rightarrow,from=doma,to=coda,"\alpha",shorten >=.1in,shorten <=.1in] 
\end{tikzcd}
\end{equation}
is precisely a $\ca{V}$-functor. Dually, an oplax pseudocomonoid morphism of the same form is a $\Vv$-opfunctor between two $\Vv$-opcategories.
\end{proposition}

\begin{proof}
The map $^\id\alpha^{f\times f}$ has components $\alpha_{xy}\colon A_{x,y}\to B_{fx,fy}$
and comes equipped with two 2-cells $\phi$ and $\phi_0$ as in \cref{oplax2cells}.
The first 2-cell has to be of the form
\begin{equation}\label{Vfunctphi}
 \begin{tikzcd}[row sep=.1in,column sep=.25in]
 & & |[alias=doma]|X^4\ar[ddrr,bend left,"A\otimes A"] & & \\
 & X^3\ar[ur,"1\diag1"]\ar[dr,"\pi_{13}"'] & && \\
 X^3\ar[ur,dashed,"\id"]\ar[dr,dashed,"\pi_{13}"']\ar[rr,phantom, near start,"\ucorner"description] & &
 |[alias=coda]|X^2\ar[rr,"A"] & &\ca{V} \\
 & X^2\ar[ur,"\id"]\ar[dr,"f^2"'] & & \\
 & & |[alias=codb]|Y^2\ar[uurr,bend right,"B"'] &
 \arrow[Rightarrow,from=doma,to=coda,"\mlt",shorten >=.1in,shorten <=.1in]
 \arrow[Rightarrow,from=coda,to=codb,"\alpha",shorten >=.1in,shorten <=.1in]
 \end{tikzcd}\;\stackrel{\phi}{\Rightarrow}\;
 \begin{tikzcd}[row sep=.1in,column sep=.25in]
 & & |[alias=doma]|X^4\ar[ddrr,bend left,"A\otimes A"] & & \\
 & X^4\ar[ur,"\id"]\ar[dr,"f^4"'] & && \\
 S\ar[ur,dashed,hook]\ar[dr,dashed,"f^2!f"']\ar[rr,phantom, near start,"\ucorner"description] & &
 |[alias=coda]|Y^4\ar[rr,"B\otimes B"] & &\ca{V} \\
 & Y^3\ar[ur,"1\diag1"]\ar[dr,"\pi_{13}"'] & & \\
 & & |[alias=codb]|Y^2\ar[uurr,bend right,"B"'] &
 \arrow[Rightarrow,from=doma,to=coda,"\alpha\otimes\alpha",shorten >=.1in,shorten <=.1in]
 \arrow[Rightarrow,from=coda,to=codb,"\mlt",shorten >=.1in,shorten <=.1in]
 \end{tikzcd}
 \end{equation}
where we compute $S=\{(x,y,z,w)\in X^4\;|\;fy=fz\}\subseteq X^4$. On the left and on the right,
respectively, we have components
\begin{gather}
(\alpha\circ\mlt)_{xyz}\colon A_{x,y}\otimes A_{y,z}\xrightarrow{\mlt}A_{x,z}\xrightarrow{\alpha_{xz}}B_{fx,fz}\label{Vfunct} \\
(m\circ(\alpha\otimes\alpha))_{xyzw}\colon A_{x,y}\otimes A_{z,w}\xrightarrow{\alpha_{xy}\otimes\alpha_{zw}}B_{fx,fy}\otimes B_{fz,fw}
\xrightarrow{\mlt}B_{fx,fz}\nonumber\qquad(fy=fz\textrm{ in } S)
\end{gather}
To define such a 2-cell $\phi$ in $\Span|\Vv$, we need a map of spans between the induced outer $\spn{Y^2}{X^3}{X^4}{}{}$ and $\spn{Y^2}{S}{X^4}{}{}$ above, that gives a factorization as in \cref{2cellsSpanV}. The function $1\times\diag\times1\colon X^3\to S$ is indeed a map of spans, and the factorization of $(\alpha\circ\mlt)$
\begin{displaymath}
\begin{tikzcd}[row sep=.1in,column sep=.55in]
&& |[alias=doma]|X^4\ar[dr,bend left=15,"A\otimes A"] & \\
X^3\ar[r,"1\diag1"description]\ar[rru, bend left=15,"1\diag1"description]\ar[drr, bend right=15,"f!f"description]
& S\ar[ur,hook]\ar[dr,"f!^2f"description] && \ca{V} \\
&& |[alias=coda]|Y^2\ar[ur,bend right=15,"B"']
\arrow[Rightarrow,from=doma,to=coda,"\mlt\circ(\alpha\ot\alpha)",shorten >=.1in,shorten <=.1in] 
\end{tikzcd}
\end{displaymath}
requires the equality of the two composites~\cref{Vfunct}, but the second only applied to quadruples
$(x,y,y,z)$; this is precisely the first axiom for a $\ca{V}$-functor.
The second 2-cell is
\begin{equation}\label{Vfunctphi0}
 \begin{tikzcd}[row sep=.05in,column sep=.25in]
 & & |[alias=doma]|1\ar[ddrr,bend left,"I"] & & \\
 & X\ar[ur,"!"]\ar[dr,"\diag"'] & && \\
 X\ar[ur,dashed,"\id"]\ar[dr,dashed,"\diag"']\ar[rr,phantom, near start,"\ucorner"description] & &
 |[alias=coda]|X^2\ar[rr,"A"] & &\ca{V}\\
 & X^2\ar[ur,"\id"]\ar[dr,"f^2"'] & & \\
 & & |[alias=codb]|Y^4\ar[uurr,bend right,"B"'] &
 \arrow[Rightarrow,from=doma,to=coda,"\uni",shorten >=.1in,shorten <=.1in]
 \arrow[Rightarrow,from=coda,to=codb,"\alpha",shorten >=.1in,shorten <=.1in]
 \end{tikzcd}\;\stackrel{\phi_0}{\Rightarrow}\;
\begin{tikzcd}[row sep=.1in]
& |[alias=doma]|1\ar[dr,bend left=15,"I"] & \\
Y\ar[ur,bend left=15,"!"]\ar[dr,bend right=15,"\diag"'] & & \ca{V} \\
& |[alias=coda]|Y^2\ar[ur,bend right=15,"B"']
\arrow[Rightarrow,from=doma,to=coda,"\uni",shorten >=.1in,shorten <=.1in] 
\end{tikzcd}
\end{equation}
between natural transformations with components $I\xrightarrow{\uni_x}A_{x,x}\xrightarrow{\alpha_{xx}}B_{fx,fx}$ and
$I\xrightarrow{\uni_y}B_{y,y}$ respectively.
Hence define $\phi_0$ by the map of spans $f\colon X\to Y$ with a factorization
\begin{displaymath}
\begin{tikzcd}[row sep=.1in,column sep=.5in]
&& |[alias=doma]|1\ar[dr,bend left=15,"I"] & \\
X\ar[r,"f"]\ar[rru, bend left=15,"!"description]\ar[drr, bend right=15,"{(f,f)}"description]
& Y\ar[ur,"!"description]\ar[dr,"\diag"description] && \ca{V} \\
&& |[alias=coda]|Y^2\ar[ur,bend right=15,"B"']
\arrow[Rightarrow,from=doma,to=coda,"\uni",shorten >=.1in,shorten <=.1in] 
\end{tikzcd}=\alpha\circ\uni
\end{displaymath}
which gives $\alpha_{xx}\circ\uni_x=\uni_{fx}$, precisely the second axiom for a $\ca{V}$-functor.

In order to verify that $\phi,\phi_0$ endow $\alpha$ with the structure of an oplax pseudomonoid morphism, we compute the appropriate pasting diagrams~\cref{eq:pseudomonmap} using composition formulas like~\cref{SpanVcomposition} and~\cref{hcomp2cellsSpanV}. Their verifications are greatly simplified by \cref{2cellsequality} since they reduce to ones for 2-cells in $\Span$. 

Notice that both $\phi$ and $\phi_0$ are uniquely defined as above, since at each case there is only one span morphism possible, see 
\cref{rem:uniquespanmap}. Therefore $\Vv$-functors are in bijection with oplax monoid morphisms $^\id\alpha^{f\times f}$.

Dually, taking into account that $\Vv$-opfunctors
are $\Vv^\op$-functors hence live inside
\begin{displaymath}
\PsMon_\opl(\Span|(\Vv^\op)) \simeq \PsMon_\opl((\Span|\Vv)^\op) \simeq \PsComon_\opl(\Span|\Vv)^\op
\end{displaymath}
we deduce that an oplax comonoid map of the form $^\id\alpha^{f\times f}$ is a $\ca{V}$-opfunctor
between the corresponding $\Vv$-opcategories.
\end{proof}

As a result, we can realize $\Vv$-$\Cat$ as a subcategory of $\PsMon_\opl(\Span|\Vv)$, the bicategory of pseudomonoids and
oplax morphisms between them, and $\VopCat$ as a subcategory of $\PsComon_\opl(\Span|\Vv)^\op$. The strict associativity and unitality of composition of 1-cells \cref{Vfunctorcell} is due to the fact that they actually come from functions, i.e. are in the image of \cref{embeddingSet}.

We now turn to the expression of semi-Hopf categories as oplax bimonoids
(\cref{oplaxbimonoid}) in $\Span|\Vv$, using the oplax bimonoid structure on $X^2\in\Span$ (\cref{X2oplaxbimonoid}); we first consider 
(pseudo)comonoids over the trivial comonoid structure of $X^2$ as discussed in \cref{sec:trivialmon}.

\begin{proposition}\label{Vgraphsascomonoids}
A comonoid in $\Span|\Vv$ over $X^2$ with the trivial comonoid structure
is precisely a $\Comon(\Vv)$-graph, i.e.\ an $X^2$-indexed family of comonoids in $\ca{V}$.
\end{proposition}

\begin{proof}
Such an object consists of $M\colon X^2\to\ca{V}$ together with natural transformations
\begin{displaymath}
\begin{tikzcd}[row sep=.07in]
& |[alias=doma]|X^2\ar[dr,bend left=15,"M"] & \\
X^2\ar[ur,bend left=15,"\id"]\ar[dr,bend right=15,"\diag"'] & & \ca{V} \\
& |[alias=coda]|X^2\times X^2\ar[ur,bend right=15,"M\otimes M"']
\arrow[Rightarrow,from=doma,to=coda,"\lcomlt",shorten >=.1in,shorten <=.1in] 
\end{tikzcd}\quad \textrm{and} \quad 
\begin{tikzcd}[row sep=.07in]
& |[alias=doma]|X^2\ar[dr,bend left=15,"M"] & \\
X^2\ar[ur,bend left=15,"\id"]\ar[dr,bend right=15,"!"'] & & \ca{V} \\
& |[alias=coda]|1\ar[ur,bend right=15,"I"']
\arrow[Rightarrow,from=doma,to=coda,"\lcouni",shorten >=.1in,shorten <=.1in] 
\end{tikzcd}
\end{displaymath}
which are given by components
\begin{gather*}
\lcomlt_{xy}\colon M_{x,y}\to M_{x,y}\otimes M_{x,y} \quad\textrm{and}\quad \lcouni_{xy}\colon M_{x,y}\to I.
\end{gather*}
Using the same machinery as for earlier proofs, we deduce that these maps satisfy
coassociativity and counity conditions which render each $M_{x,y}\in\Vv$ a strict comonoid; as discussed in \cref{sec:trivialmon}, this 
comultiplication and counit only form a pseudocomonoid structure with identity structure 2-cells.
\end{proof}

\begin{proposition}\label{semiHopfcatsasoplaxbimonoids}
An oplax bimonoid in $\Span|\Vv$ over the oplax bimonoid $X^2$ in $\Span$ is precisely a semi-Hopf $\Vv$-category.
\end{proposition}

\begin{proof}
By~\cref{Vopcatsascomonoids,Vgraphsascomonoids}, we already know that an object $A$ in $\Span|\Vv$
with the structure of a pseudomonoid and (pseudo)comonoid over the
\{groupoid pseudomonoid, trivial comonoid\} $X^2$ is a $\ca{V}$-category which is $\Comon(\ca{V})$-enriched as a graph.
We will now show that requiring $(A,\mlt,\uni,\lcomlt,\lcouni)$
to be an oplax bimonoid in $\Span|\Vv$ in fact endows its enriched composition law and identities
with a comonoid morphism structure, as in~\cref{def:sHopfcat}.

Following~\cref{oplaxbimonoid}, $A$ comes equipped with four 2-cells $\theta,\theta_0,\chi,\chi_0$ in $\Span|\Vv$,
satisfying certain coherence conditions. We address each one of them in detail below.

\fbox{$\theta$}
\begin{displaymath}
\begin{tikzcd}[row sep=.1in,column sep=.3in]
&&& |[alias=doma]|X^4\ar[rrrddd,bend left,"A\otimes A"] &&& \\
&& X^4\ar[ur,"1\diag1"]\ar[dr,"\diag_{X^2}^2"'] &&&& \\
&&& |[alias=coda]|X^8\ar[rrrd,"A\otimes A\otimes A\otimes A"description] &&& \\
P\cong X^3\ar[uurr,dashed]\ar[ddrr,dashed]\ar[rr,phantom, near start,"\ucorner"description] &&
X^8\ar[ur,"\id"]\ar[dr,"1\braid1"'] &&&& \ca{V} \\
&&& |[alias=codb]|X^8\ar[urrr,"A\otimes A\otimes A\otimes A"description] && \\
&& X^6 \ar[ur,"(1\diag1)^2"]\ar[dr,"\pi_{13}^2"'] &&& \\
&&& |[alias=codc]|X^4\ar[rrruuu,bend right,"A\otimes A"']
\arrow[Rightarrow,from=doma,to=coda,"\lcomlt\otimes\lcomlt",shorten >=.1in,shorten <=.1in]
\arrow[Rightarrow,from=coda,to=codb,"1\otimes\braid\otimes1"description,shorten >=.05in,shorten <=.05in]
\arrow[Rightarrow,from=codb,to=codc,"\mlt\otimes\mlt",shorten >=.1in,shorten <=.1in] 
\end{tikzcd}\;\stackrel{\theta}{\cong}\;
 \begin{tikzcd}[row sep=.15in,column sep=.25in]
 & & |[alias=doma]|X^4\ar[ddrr,bend left,"A\otimes A"] & & \\
 & X^3\ar[ur,"1\diag1"]\ar[dr,"\pi_{13}"'] & && \\
 X^3\ar[ur,dashed,"\id"]\ar[dr,dashed,"\pi_{13}"']\ar[rr,phantom, near start,"\ucorner"description] & &
 |[alias=coda]|X^2\ar[rr,"A"] & &\ca{V}\\
 & X^2\ar[ur,"\id"]\ar[dr,"\diag_{X^2}"'] & & \\
 & & |[alias=codb]|X^4\ar[uurr,bend right,"A\otimes A"'] &
 \arrow[Rightarrow,from=doma,to=coda,"\mlt",shorten >=.1in,shorten <=.1in]
 \arrow[Rightarrow,from=coda,to=codb,"\lcomlt",shorten >=.1in,shorten <=.1in]
 \end{tikzcd}
\end{displaymath}
The left hand side pasted composite, using~\cref{tensor1cells} for the tensor of 1-cells, is given by
\begin{displaymath}
A_{x,y}\otimes A_{y,z}\xrightarrow{\lcomlt_{x,y}\otimes\lcomlt_{y,z}}A_{x,y}^{\otimes2}\otimes A_{y,z}^{\otimes2}
\xrightarrow{1\otimes\braid\otimes1}A_{x,y}\otimes A_{y,z}\otimes A_{x,y}\otimes A_{y,z}\xrightarrow{\mlt_{xyz}^{\otimes2}}
A_{x,z}\otimes A_{x,z}
\end{displaymath}
whereas the right hand side is given by $A_{x,y}\otimes A_{y,z}\xrightarrow{\mlt_{xyz}}A_{x,z}\xrightarrow{\lcomlt_{xz}}A_{x,z}
\otimes A_{x,z}$.
The required 2-cell $\theta$ is given by the (unique) span isomorphism  $P\cong X^3$
from~\cref{phi12} with the condition $(\lcomlt\circ\mlt)_{xyz}=\left((\lcomlt\otimes\lcomlt)\circ(1\otimes\braid\otimes1)
\circ(\mlt\otimes\mlt)\right)_{xyz}$, which is precisely the first commutative diagram in~\cref{Hax1}.

\fbox{$\theta_0$}
\begin{displaymath}
 \begin{tikzcd}[row sep=.1in,column sep=.25in]
 & & |[alias=doma]|1\ar[ddrr,bend left,"I"] & & \\
 & X\ar[ur,"!"]\ar[dr,"\diag"'] & && \\
 X\ar[ur,dashed,"\id"]\ar[dr,dashed,"\diag"']\ar[rr,phantom, near start,"\ucorner"description] & &
 |[alias=coda]|X^2\ar[rr,"A"] & &\ca{V}\\
 & X^2\ar[ur,"\id"]\ar[dr,"\diag_{X^2}"'] & & \\
 & & |[alias=codb]|X^4\ar[uurr,bend right,"A\otimes A"'] &
 \arrow[Rightarrow,from=doma,to=coda,"\uni",shorten >=.1in,shorten <=.1in]
 \arrow[Rightarrow,from=coda,to=codb,"\lcomlt",shorten >=.1in,shorten <=.1in]
 \end{tikzcd}\;\stackrel{\theta_0}{\Rightarrow}\;
\begin{tikzcd}[row sep=.1in]
& |[alias=doma]|1\times 1\ar[dr,bend left=15,"I\otimes I"] & \\
X\times X\ar[ur,bend left=15,"!"]\ar[dr,bend right=15,"\diag\times\diag"'] & & \ca{V} \\
& |[alias=coda]|X^2\times X^2\ar[ur,bend right=15,"A\otimes A"']
\arrow[Rightarrow,from=doma,to=coda,"\uni\otimes\uni",shorten >=.1in,shorten <=.1in] 
\end{tikzcd}
\end{displaymath}
The left hand side composite is given by arrows $I\xrightarrow{\uni_x}A_{x,x}\xrightarrow{\lcomlt_{x,x}}A_{x,x}\otimes A_{x,x}$
and the right hand side by $I\xrightarrow{\uni_x\otimes\uni_y}A_{x,x}\otimes A_{y,y}$ in $\ca{V}$. The 2-cell $\theta_0$ is given by the morphism
of spans $X\xrightarrow{\diag}X^2$ from~\cref{phi34} and the factorization
\begin{displaymath}
\begin{tikzcd}[row sep=.1in,column sep=.5in]
&& |[alias=doma]|1\times 1\ar[dr,bend left=15,"I\otimes I"] & \\
X\ar[r,"\diag"]\ar[rru, bend left=15,"!"description]\ar[drr, bend right=15,"\diag^2"description]
& X\times X\ar[ur,"!"description]\ar[dr,"\diag\times\diag"description] && \ca{V} \\
&& |[alias=coda]|X^2\times X^2\ar[ur,bend right=15,"A\otimes A"']
\arrow[Rightarrow,from=doma,to=coda,"\uni\otimes\uni",shorten >=.1in,shorten <=.1in] 
\end{tikzcd}=\lcomlt\circ\lcomlt
\end{displaymath}
i.e.\ $(\lcomlt\circ\uni)_x=(\uni\otimes\uni)_{\diag x}=\uni_x\otimes\uni_x$ which gives the second commutative diagram of~\cref{Hax1}.
 
\fbox{$\chi$}
\begin{displaymath}
\begin{tikzcd}[row sep=.1in,column sep=.25in]
& & |[alias=doma]|X^4\ar[ddrr,bend left,"A\otimes A"] & & \\
& X^3\ar[ur,"1\times\diag\times1"]\ar[dr,"\pi_{13}"'] & && \\
X^3\ar[ur,dashed,"\id"]\ar[dr,dashed,"\pi_{13}"']\ar[rr,phantom, near start,"\ucorner"description] & &
|[alias=coda]|X^2\ar[rr,"A"] & &\ca{V}\\
& X^2\ar[ur,"\id"]\ar[dr,"!"'] & & \\
& & |[alias=codb]|1\ar[uurr,bend right,"I"'] &
\arrow[Rightarrow,from=doma,to=coda,"\mlt",shorten >=.1in,shorten <=.1in]
\arrow[Rightarrow,from=coda,to=codb,"\lcouni",shorten >=.1in,shorten <=.1in]
\end{tikzcd}\;\stackrel{\chi}{\Rightarrow}\;
\begin{tikzcd}[row sep=.1in]
& |[alias=doma]|X^2\times X^2\ar[dr,bend left=15,"A\otimes A"] & \\
X^2\times X^2\ar[ur,bend left=15,"\id"]\ar[dr,bend right=15,"!"'] & & \ca{V} \\
& |[alias=coda]|1\times1\ar[ur,bend right=15,"I\otimes I"']
\arrow[Rightarrow,from=doma,to=coda,"\lcouni\otimes\lcouni",shorten >=.1in,shorten <=.1in] 
\end{tikzcd}
\end{displaymath}
Similarly to above, we have components $A_{x,y}\otimes A_{y,z}\xrightarrow{\mlt_{xyz}}A_{x,z}\xrightarrow{\lcouni_{xz}}I$
and $A_{x,y}\otimes A_{z,w}\xrightarrow{\lcouni_{xy}\otimes\lcouni_{zw}}I\otimes I\cong I$, and the 2-cell $\chi$ is
the a map of spans $X^3\xrightarrow{1\times\diag\times1}X^4$ as in~\cref{phi34} with the factorization
\begin{displaymath}
\begin{tikzcd}[row sep=.1in,column sep=.5in]
&& |[alias=doma]|X^2\times X^2\ar[dr,bend left=15,"A\otimes A"] & \\
X^3\ar[r,"1\diag1"]\ar[rru, bend left=15,"1\diag1"description]\ar[drr, bend right=15,"!"description]
& X^2\times X^2\ar[ur,"\id"description]\ar[dr,"!"description] && \ca{V} \\
&& |[alias=coda]|1\times1\ar[ur,bend right=15,"I\otimes I"']
\arrow[Rightarrow,from=doma,to=coda,"\lcouni\otimes\lcouni",shorten >=.1in,shorten <=.1in] 
\end{tikzcd}
\end{displaymath}
i.e.\ $(\lcouni\circ\mlt)_{xyz}=(\lcouni\otimes\lcouni)_{xyyz}=\lcouni_{xy}\otimes\lcouni_{yz}$
which is the third commutative diagram of~\cref{Hax1}.

\fbox{$\chi_0$}
 \begin{displaymath}
 \begin{tikzcd}[row sep=.1in,column sep=.25in]
 & & |[alias=doma]|1\ar[ddrr,bend left,"I"] & & \\
 & X\ar[ur,"!"]\ar[dr,"\diag"'] & && \\
 X\ar[ur,dashed,"\id"]\ar[dr,dashed,"\diag"']\ar[rr,phantom, near start,"\ucorner"description] & &
 |[alias=coda]|X^2\ar[rr,"A"] & &\ca{V}\\
 & X^2\ar[ur,"\id"]\ar[dr,"!"'] & & \\
 & & |[alias=codb]|1\ar[uurr,bend right,"I"'] &
 \arrow[Rightarrow,from=doma,to=coda,"\uni",shorten >=.1in,shorten <=.1in]
 \arrow[Rightarrow,from=coda,to=codb,"\lcouni",shorten >=.1in,shorten <=.1in]
 \end{tikzcd}\;\stackrel{\chi_0}{\Rightarrow}\;
\begin{tikzcd}[row sep=.1in]
& |[alias=doma]|1\ar[dr,bend left=15,"I"] & \\
1\ar[ur,bend left=15,"\id"]\ar[dr,bend right=15,"\id"'] & & \ca{V} \\
& |[alias=coda]|1\ar[ur,bend right=15,"I"']
\arrow[Rightarrow,from=doma,to=coda,"1_I",shorten >=.1in,shorten <=.1in] 
\end{tikzcd}
\end{displaymath}
with components $I\xrightarrow{\uni_x}A_{x,x}\xrightarrow{\lcouni_x,x}I$ and $I\xrightarrow{\id}I$ in $\ca{V}$
respectively. The 2-cell $\chi_0$ is given by the map of spans $X\xrightarrow{!}1$ of~\cref{phi12} and the factorization
\begin{displaymath}
\begin{tikzcd}[row sep=.1in,column sep=.5in]
&& |[alias=doma]|1\ar[dr,bend left=15,"I"] & \\
X\ar[r,"!"]\ar[rru, bend left=15,"!"description]\ar[drr, bend right=15,"!"description]
& 1\ar[ur,"\id"description]\ar[dr,"\id"description] && \ca{V} \\
&& |[alias=coda]|1\ar[ur,bend right=15,"I"']
\arrow[Rightarrow,from=doma,to=coda,"1_I",shorten >=.1in,shorten <=.1in] 
\end{tikzcd}=\lcouni\ot\uni
\end{displaymath}
i.e.\ $(\lcouni\otimes\uni)_x=(1_I)_x$ which coincides with the fourth diagram of~\cref{Hax1}.

Due to~\cref{2cellsequality}, the proof of~\cref{X2oplaxbimonoid} guarantees that the above 2-cells satisfy
axioms~\cref{ax1} to~\cref{ax10} of an oplax bimonoid, hence the proof is complete.
\end{proof}

Notice that the 2-cells rendering $X^2$ an oplax bimonoid in $\Span$ in~\cref{X2oplaxbimonoid} are all unique, based on
\cref{oplaxbimonoidproperty}. Hence the above proposition establishes a bijection between semi-Hopf $\Vv$-categories
and oplax bimonoids over $X^2$ in $\Span|\Vv$.

Moving on to the morphisms between semi-Hopf categories,
we initially address the comonoid part of our structures. 

\begin{proposition}\label{prop:ComonVgraphmaps}
Suppose we have two comonoids $C_{X^2}$, $D_{Y^2}$ in $\Span|\Vv$ with the trivial comonoid structure on their underlying sets, i.e.~two 
$\Comon(\Vv)$-graphs. A (strict) comonoid map of the form $^\id\alpha^{f\times f}$ is precisely a $\Comon(\Vv)$-graph morphism.
\end{proposition}

\begin{proof}
The 1-cell $\alpha$ is of the form~\cref{Vfunctorcell}, but now the comonoid structures are the trivial and not the groupoid ones; if we denote by 
$\lcomlt$ and $\lcouni$
the comultiplication and counit, the coassociativity isomorphism can again only be the identity, namely  
$(\alpha\ot\alpha)\circ\lcomlt=\lcomlt\circ\alpha$ which is
\begin{equation}\label{eq:identity2cellComongraphs}
 \begin{tikzcd}[row sep=.1in,column sep=.25in]
 & & |[alias=doma]|X^2\ar[ddrr,bend left,"C"] & & \\
 & X^2\ar[ur,"\id"]\ar[dr,"\diag"'] & && \\
 X^2\ar[ur,dashed,"\id"]\ar[dr,dashed,"\diag"']\ar[rr,phantom, near start,"\ucorner"description] & &
 |[alias=coda]|X^4\ar[rr,"C\ot C"] & &\ca{V}= \\
 & X^4\ar[ur,"\id"]\ar[dr,"f^4"'] & & \\
 & & |[alias=codb]|Y^4\ar[uurr,bend right,"D\otimes D"'] &
 \arrow[Rightarrow,from=doma,to=coda,"\lcomlt",shorten >=.1in,shorten <=.1in]
 \arrow[Rightarrow,from=coda,to=codb,"\alpha\ot\alpha",shorten >=.1in,shorten <=.1in]
 \end{tikzcd}
 \begin{tikzcd}[row sep=.1in,column sep=.25in]
 & & |[alias=doma]|X^2\ar[ddrr,bend left,"C"] & & \\
 & X^2\ar[ur,"\id"]\ar[dr,"f^2"'] & && \\
 X^2\ar[ur,dashed,"\id"]\ar[dr,dashed,"f^2"']\ar[rr,phantom, near start,"\ucorner"description] & &
 |[alias=coda]|Y^2\ar[rr,"D"] & &\ca{V} \\
 & Y^2\ar[ur,"\id"]\ar[dr,"\diag"'] & & \\
 & & |[alias=codb]|Y^4\ar[uurr,bend right,"D\otimes D"'] &
 \arrow[Rightarrow,from=doma,to=coda,"\alpha",shorten >=.1in,shorten <=.1in]
 \arrow[Rightarrow,from=coda,to=codb,"\lcomlt",shorten >=.1in,shorten <=.1in]
 \end{tikzcd}
\end{equation}
For the $\Vv$-components, this requires the commutativity of
\begin{displaymath}
 \begin{tikzcd}[row sep=.3in]
 C_{x,y}\ar[r,"\lcomlt_{xy}"]\ar[d,"\alpha_{xy}"'] & C_{x,y}\ot C_{x,y}\ar[d,"\alpha_{xy}\ot\alpha_{xy}"] \\
 D_{fx,fy}\ar[r,"\lcomlt_{fxfy}"'] & D_{fx,fy}\ot D_{fx,fy}
 \end{tikzcd} 
\end{displaymath}
and similarly for counity;
% $\lcouni\circ\alpha=\lcouni$;
\begin{comment}
 is
 \begin{displaymath}
 \begin{tikzcd}[row sep=.1in,column sep=.25in]
 & & |[alias=doma]|X^2\ar[ddrr,bend left,"C"] & & \\
 & X^2\ar[ur,"\id"]\ar[dr,"f^2"'] & && \\
 X^2\ar[ur,dashed,"\id"]\ar[dr,dashed,"f^2"']\ar[rr,phantom, near start,"\ucorner"description] & &
 |[alias=coda]|Y^2\ar[rr,"D"] & &\ca{V}\\
 & Y^2\ar[ur,"\id"]\ar[dr,"!"'] & & \\
 & & |[alias=codb]|1\ar[uurr,bend right,"I"'] &
 \arrow[Rightarrow,from=doma,to=coda,"\alpha",shorten >=.1in,shorten <=.1in]
 \arrow[Rightarrow,from=coda,to=codb,"\lcouni",shorten >=.1in,shorten <=.1in]
 \end{tikzcd}\;=\;
\begin{tikzcd}[row sep=.1in]
& |[alias=doma]|X^2\ar[dr,bend left=15,"C"] & \\
X^2\ar[ur,bend left=15,"\id"]\ar[dr,bend right=15,"!"'] & & \ca{V} \\
& |[alias=coda]|1\ar[ur,bend right=15,"I"']
\arrow[Rightarrow,from=doma,to=coda,"\lcouni",shorten >=.1in,shorten <=.1in] 
\end{tikzcd}
\end{displaymath}
which translates in the commutative triangle
\begin{displaymath}
 \begin{tikzcd}[row sep=.3in]
  C_{x,y}\ar[r,"\alpha_{xy}"]\ar[dr,"\lcouni_{xy}"'] & D_{fx,fy}\ar[d,"\lcouni_{fxfy}"] \\
  & I
 \end{tikzcd}
\end{displaymath}
\end{comment}
these precisely translate in all $\alpha_{xy}$ being comonoid morphisms.
\end{proof}

Due to~\cref{Vgraphsascomonoids,prop:ComonVgraphmaps}, we conclude that the category $\Comon(\Vv)$-$\sf{Grph}$ is a subcategory
of $\Comon(\Span|\Vv)$ of those objects over $X^2$ with the trivial comonoid
structure and of strict morphisms $^\id\alpha^{f\times f}$.

Using~\cref{def:bimonoidmap}, we can express $\Comon(\Vv)$-functors in $\Span|\Vv$ as well.

\begin{proposition}
An oplax bimonoid morphism of the form $^\id\alpha^{f\times f}$ between two oplax bimonoids $A_{X^2}$ and $B_{Y^2}$ in $\Span|\Vv$ as in~\cref{semiHopfcatsasoplaxbimonoids} is a semi-Hopf $\Vv$-functor.
\end{proposition}

\begin{proof}
The 2-cells $\phi,\phi_0,\psi,\psi_0$ like~\cref{oplaxbimonoidmap} are respectively given by~\cref{Vfunctphi},
\cref{Vfunctphi0} and identities \cref{eq:identity2cellComongraphs}.~\cref{prop:Vfunctors,prop:ComonVgraphmaps} ensure that these make $^\id\alpha^{f\times f}$
into an oplax morphism of pseudomonoids and a morphism of comonoids, producing the four diagrams of a $\Comon(\Vv)$-functor through factorizations as 
usual.
The conditions of~\cref{def:bimonoidmap} can be verified in a straightforward way,
by comparing the appropriate maps of spans due to~\cref{2cellsequality}.
\end{proof}

Consequently, $\sHopfCat{\Vv}$ is a subcategory of $\OplBimon(\Span|\Vv)$ of oplax bimonoids
and morphisms between them, spanned by objects over $X^2$ as in~\cref{X2oplaxbimonoid} and maps of the form $^\id\alpha^{f\times f}$. As mentioned 
earlier, even though $\OplBimon(\Span|\Vv)$ is in general a bicategory, these structures form a category due to the form of morphisms coming from 
sheer functions.
%probably mention again how this is a condition and not structure, that's why we can talk of this subcategory!

Finally, we turn on to the subcategory of Hopf $\Vv$-categories, \cref{def:Hopfcat}. The oplax Hopf monoid $(X^2,\mu,\eta,\zeta,\nu)$ in $\Span$, 
\cref{X2Hopf}, underlies our oplax Hopf monoids in $\Span|\Vv$, as was the case
in all the above propositions.

\begin{proposition}
An oplax Hopf monoid in $\Span|\Vv$ over the oplax Hopf monoid $X^2$ in $\Span$ is precisely a Hopf $\Vv$-category.
\end{proposition}

\begin{proof}
Following \cref{oplaxHopfmonoid}, we wish to endow the oplax
bimonoid $(A,\mlt,\uni,\lcomlt,\lcouni,\theta,$ $
\theta_0,\chi,\chi_0)$ from \cref{semiHopfcatsasoplaxbimonoids}
with an antipode $\atpd\colon A\to A$ in $\Span|\Vv$
over the span $\spn{X^2}{X^2}{X^2}{\id}{\mathrm{sw}}$
\begin{displaymath}
 \begin{tikzcd}[row sep=.1in]
 & |[alias=doma]|X^2 \ar[dr,bend left=15,"A"] & \\
 X^2 \ar[ur,bend left=15,"\id"] \ar[dr,bend right=15,"\mathrm{sw}"'] & & \ca{V} \\
 & |[alias=coda]|X^2 \ar[ur,bend right=15,"A"']
 \arrow[Rightarrow,from=doma,to=coda,"\atpd",shorten >=.1in,shorten <=.1in]
 \end{tikzcd}
\end{displaymath}
given by families $\atpd_{xy} \colon A_{x,y} \to A_{y,x}$, along with two 
2-cells $\tau_1,\tau_2$ in $\Span|\Vv$ \cref{2cellsSpanV}
%and $\tau_2\colon\mlt(\atpd\otimes1)\circ\lcomlt\Rightarrow\uni\circ\lcouni$. The first one is
where for example, the first one is
\begin{displaymath}
\begin{tikzcd}[row sep=.1in,column sep=.3in]
&&& |[alias=doma]|X^2\ar[rrrddd,bend left,"A"] &&& \\
&& X^2\ar[ur,"\id"]\ar[dr,"\diag_{X^2}"'] &&&& \\
&&& |[alias=coda]|X^4\ar[rrrd,"A\otimes A"description] &&& \\
X^2\ar[uurr,dashed]\ar[ddrr,dashed]\ar[rr,phantom, near start,"\ucorner"description] &&
X^4\ar[ur,"\id"]\ar[dr,"1\mathrm{sw}"'] &&&& \ca{V} \\
&&& |[alias=codb]|X^4\ar[urrr,"A\otimes A"description] && \\
&& X^3 \ar[ur,"1\diag1"]\ar[dr,"\pi_{13}"'] &&& \\
&&& |[alias=codc]|X^2\ar[rrruuu,bend right,"A"']
\arrow[Rightarrow,from=doma,to=coda,"\lcomlt",shorten >=.1in,shorten <=.1in]
\arrow[Rightarrow,from=coda,to=codb,"1\otimes\atpd"description,shorten >=.05in,shorten <=.05in]
\arrow[Rightarrow,from=codb,to=codc,"\mlt",shorten >=.1in,shorten <=.1in] 
\end{tikzcd}\;\stackrel{\tau_1}{\Rightarrow}\;
 \begin{tikzcd}[row sep=.15in,column sep=.25in]
 & & |[alias=doma]|X^2\ar[ddrr,bend left,"A"] & & \\
 & X^2\ar[ur,"\id"]\ar[dr,"!"'] & && \\
 X^3\ar[ur,dashed,"\pi_{12}"]\ar[dr,dashed,"\pi_{3}"']\ar[rr,phantom, near start,"\ucorner"description] & &
 |[alias=coda]|1\ar[rr,"I"] & &\ca{V}\\
 & X\ar[ur,"!"]\ar[dr,"\diag"'] & & \\
 & & |[alias=codb]|X^2\ar[uurr,bend right,"A"'] &
 \arrow[Rightarrow,from=doma,to=coda,"\lcouni",shorten >=.1in,shorten <=.1in]
 \arrow[Rightarrow,from=coda,to=codb,"\uni",shorten >=.1in,shorten <=.1in]
 \end{tikzcd}
\end{displaymath}
The left hand side and right hand side components are, respectively,
\begin{gather*}
(\mlt\circ(1\otimes\atpd)\circ\lcomlt)_{xy}\colon A_{x,y}\xrightarrow{\lcomlt_{xy}}A_{x,y}\ot A_{x,y}
\xrightarrow{1\ot\atpd_{xy}}A_{x,y}\ot A_{y,x}\xrightarrow{\mlt_{xyx}}A_{x,x} \\
(\uni\circ\lcouni)_{xyz}\colon A_{x,y}\xrightarrow{\lcouni_{xy}}I\xrightarrow{\uni_z} A_{z,z}
\end{gather*}
and the 2-cell $\tau_1$ is given by the map of spans $\rho_1=(1\times\mathrm{sw})(\diag\times1)
\colon X^2\to X^3$
just like the upper \cref{tauforX2}, such that
\begin{displaymath}
\begin{tikzcd}[row sep=.1in,column sep=.5in]
&& |[alias=doma]|X^2\ar[dr,bend left=15,"AA"] & \\
X^2\ar[r,"\rho_1"]\ar[rru, bend left=15,"\id"description]\ar[drr, bend right=15,"\diag\circ\pi_1"description]
& X^3\ar[ur,"\pi_{12}"description]\ar[dr,"\diag\circ\pi_3"description] && \ca{V} \\
&& |[alias=coda]|X^2\ar[ur,bend right=15,"A"']
\arrow[Rightarrow,from=doma,to=coda,"\uni\circ\lcouni",shorten >=.1in,shorten <=.1in] 
\end{tikzcd}=\mlt\circ(1\otimes\atpd)\circ\lcomlt
\end{displaymath}
This factorization can be written as the upper commutative diagram of 
the Hopf $\Vv$-category axiom \cref{HopfCatAntipodeEquations},
%again map of spans 'chooses' the indices for which a condition hold, write somewhere as a nice comment!
and similarly for $\tau_2$.
\end{proof}

This establishes a bijection between Hopf $\Vv$-categories and oplax Hopf monoids in $\Span|\Vv$
over $X^2$, and a Hopf $\Vv$-functor is merely an oplax bimonoid morphism in $\Span|\Vv$ like before.

\section{Frobenius \texorpdfstring{$\Vv$}{V}-categories}\label{FrobeniusVcats}

Hopf algebras are closely related to Frobenius algebras in the classical context,
and more generally to Frobenius monoids in an arbitrary monoidal category $\Vv$.
As a result, we may also ask what a many-object generalization of a Frobenius object could be. In this section, we initially provide an ad-hoc 
definition of a Frobenius $\Vv$-category, which is then realized as a Frobenius pseudomonoid in the same monoidal bicategory $\Span|\Vv$ where Hopf 
$\Vv$-categories live as oplax Hopf monoids, see \cref{sec:SpanV,sec:FrobeniusHopfVCatsSpanV}.
In subsequent work \cite{paper1b}, we study in detail the relation between Frobenius and Hopf categories, providing
a generalized version of the standard Larson-Sweedler theorem. For the time being, we restrict
to the solid introduction of this notion and its expression as a Frobenius pseudomonoid.

\subsection{Frobenius categories}

Naturally, the Frobenius counterpart of Hopf $\Vv$-categories will still have
a notion of multiplication and comultiplication. However, the `local comonoid'
part previously captured via the enrichment in comonoids is now replaced by a global operation, expressed as follows.

\begin{definition}\label{def:Frobeniuscat}
A \emph{Frobenius $\Vv$-category} $A$ is a $\Vv$-category that is also a $\Vv$-opcategory and satisfies `indexed'
Frobenius conditions. Explicitly, it consists of a set of objects $ A_0$ and for every $x,y\in  A_0$
an object $ A_{x,y}$ of $\Vv$ together with maps
\begin{gather*}
\mlt_{xyz}\colon A_{x,y}\otimes  A_{y,z}\to A_{x,z}\qquad \uni_x\colon I\to A_{x,x} \\
\comlt_{abc}\colon A_{a,c}\to A_{a,b}\otimes A_{b,c}\qquad \couni_y\colon A_{y,y}\to I 
\end{gather*}
which satisfy the $\Vv$-category and $\Vv$-opcategory axioms, as well as the commutativity of
\begin{equation}\label{frob1}
\begin{tikzcd}[column sep=.3in,row sep=.2in]
 A_{x,y}\ot A_{y,z}\ar[rr,"\comlt_{xwy}\ot1"]\ar[dd,"1\ot\comlt_{ywz}"']\ar[dr,"\mlt_{xyz}"description] &&
 A_{x,w}\ot A_{w,y}\ot A_{y,z}\ar[dd,"1\ot\mlt_{wyz}"] \\
&  A_{x,z}\ar[dr,"\comlt_{xwz}"description] & \\
 A_{x,y}\ot A_{y,w}\ot A_{w,z}\ar[rr,"\mlt_{xyw}\ot1"'] &&
 A_{x,w}\ot A_{w,z}
\end{tikzcd}
\end{equation}
\end{definition}

\begin{definition}\label{Frobeniusfunctor}
A \emph{Frobenius $\Vv$-functor} between two Frobenius categories $ A$ and $B$ is a $\Vv$-graph morphism simultaneously
in $\VCat$ and $\VopCat^\op$. This amounts to a function $f\colon A_0\to B_0$ between the sets of objects,
along with families of arrows $F_{xy}\colon A_{x,y}\to B_{fx,fy}$ in $\ca{V}$ subject
to the following axioms:
\begin{displaymath}
\begin{tikzcd}[column sep=.7in, row sep=.5in]
 A_{x,y}\otimes A_{y,z}\ar[r,"\mlt_{xyz}"]\ar[d,"F_{xy}\otimes F_{yz}"'] &  A_{x,z}\ar[d,"F_{xz}"] \\
 B_{fx,fy}\otimes B_{fy,fz}\ar[r,"\mlt_{fxfyfz}"'] &  B_{fx,fz}
\end{tikzcd}\qquad
\begin{tikzcd}[column sep=.7in, row sep=.5in]
I\ar[r,"\uni_x"]\ar[dr,"\uni_{fx}"'] &  A_{x,x}\ar[d,"F_{xx}"] \\
&  B_{fx,fx}
\end{tikzcd}
\end{displaymath}
\begin{displaymath}
\begin{tikzcd}[column sep=.7in, row sep=.5in]
 A_{x,z}\ar[r,"\comlt_{xyz}"]\ar[d,"F_{xz}"'] &  A_{x,y}\otimes A_{y,z}\ar[d,"F_{xy}\otimes F_{yz}"]  \\
 B_{fx,fz}\ar[r,"\comlt_{fxfyfz}"'] &  B_{fx,fy}\otimes B_{fy,fz}
\end{tikzcd}\qquad
\begin{tikzcd}[column sep=.7in, row sep=.5in]
 A_{x,x}\ar[d,"F_{xx}"']\ar[r,"\couni_x"] & I  \\
 B_{fx,fx}\ar[ur,"\couni_{fx}"']
\end{tikzcd}
\end{displaymath}
\end{definition}

\begin{remark}
The above introduced notions should not be confused with (related but different) ones existing in literature.  
The name {\em Frobenius category} is also used for an exact category which has enough injectives
and enough projectives and where the class of projectives coincides with the class of
injectives, see~\cite{Heller1960}. 
The name {\em Frobenius functor} is also used for a functor that has an identical left and
right adjoint, see~\cite{Caenepeel2002}. 
\end{remark}

Frobenius $\Vv$-categories and Frobenius $\Vv$-functors form a category $\FrobCat{\Vv}$.
\begin{examples}\hfill
\begin{enumerate}
 \item Every Frobenius monoid in a monoidal category $\Vv$ can be viewed a one-object Frobenius $\Vv$-category; as a result, this definition indeed serves as many-object generalization of Frobenius algebras. In particular, each `diagonal' hom-object $A_{x,x}$ of a Frobenius $\Vv$-category is a Frobenius monoid in $\Vv$.
 \item For a commutative ring $k$, let $\Mat$ be the category whose objects are the natural numbers and whose hom-sets
$\Mat_{m,n}$ are the sets of $m\times n$ matrices with entries in $k$. This is a
$\Mod_k$-category if we take the usual composition of matrices and identity matrices.
%That is, $\Mat_{l,m} \otimes \Mat_{m,n} \to \Mat_{l,m}$ is just matrix multiplication. 
The comultiplication and counit are defined by
$$\comlt_{n,p,m} \colon \Mat_{n,m} \to \Mat_{n,p} \ot \Mat_{p,m} \colon e^{n,m}_{i,j} \mapsto
\sum_{t=1}^{p} e^{n,p}_{i,t}\ot e^{p,m}_{t,j}$$
$$\couni_{n,m} \colon \Mat_{n,m} \to k \colon e^{n,m}_{i,j} \mapsto \delta_{i,j}$$
for all $1 \leq i\leq n, 1 \leq j \leq m$, where $e^{n,m}_{i,j}$ denote the elementary matrices of $\Mat_{n,m}$ with a single $1$ in the $i$-th row and $j$-th column and zeroes elsewhere.
This makes $\Mat$ into a Frobenius $k$-linear category, generalizing the classical example of each $\Mat_{n,n}$ being a Frobenius algebra.
\end{enumerate}
\end{examples}

\subsection{Frobenius pseudomonoid structure}

The following results extend \cref{thm:centralthm} to the case of Frobenius monoids;
they are also relevant to \cref{Streetstable}. Again, the fact that $U\colon\Span|\Vv\to\Span$
is strict monoidal via \cref{eq:Ustrict} implies that the underlying set of a Frobenius pseudomonoid
in $\Span|\Vv$ is also a Frobenius pseudomonoid $\Span$ similarly to \cref{prop:Fpreserves}, and the set $X^2$ will once more play this role, with 
structures described in \cref{FrobMonStr}.

%\begin{theorem}
% Let $\Vv$ be a braided monoidal category, and $X$ any set.
% \item A Frobenius monoid in $\Span|\Vv$ over the (groupoid monoid,groupoid comonoid)
%Frobenius monoid $X^2$ in $\Span$ is exactly a Frobenius $\Vv$-category over $X$.
%\item A Frobenius monoid in $\Span|\Vv$ over the (trivial monoid, trivial comonoid)
%Frobenius monoid $X^2$ in $\Span$ is exactly a $\Frob(\Vv)$-graph over $X$.
%\end{theorem}

\begin{proposition}\label{Frobeniuscatsasmonoids}
A Frobenius pseudomonoid in $\Span|\Vv$ over the Frobenius $X^2$ with groupoid pseudomonoid
and groupoid pseudocomonoid structure in $\Span$ is a Frobenius $\Vv$-category.
\end{proposition}

\begin{proof}
Having established by~\cref{Vopcatsascomonoids} that a pseudomonoid and pseudocomonoid
$(A,\mlt,\uni,\comlt,\couni)$ in $\Span|\Vv$ over $X^2$ of \cref{X2Frobenius} is a
$\ca{V}$-category and $\Vv$-opcategory, we only need to check they form a Frobenius pseudomonoid structure, namely there exist isomorphisms as in \cref{pseudoFrobcond} satisfying the axioms of \cref{sec:Frobpseudo}.
The lower isomorphism is
\begin{displaymath}
 \begin{tikzcd}[row sep=.1in,column sep=.25in]
 & & |[alias=doma]|X^4\ar[ddrr,bend left,"A\otimes A"] & & \\
 & X^3\ar[ur,"1\diag1"]\ar[dr,"\pi_{13}"'] & && \\
 X^4\ar[ur,dashed]\ar[dr,dashed]\ar[rr,phantom, near start,"\ucorner"description] & &
 |[alias=coda]|X^2\ar[rr,"A"] & &\ca{V}\stackrel{\psi}{\cong} \\
 & X^3\ar[ur,"\pi_{13}"]\ar[dr,"1\diag1"'] & & \\
 & & |[alias=codb]|X^4\ar[uurr,bend right,"A\otimes A"'] &
 \arrow[Rightarrow,from=doma,to=coda,"\mlt",shorten >=.1in,shorten <=.1in]
 \arrow[Rightarrow,from=coda,to=codb,"\comlt",shorten >=.1in,shorten <=.1in]
 \end{tikzcd}
 \begin{tikzcd}[row sep=.1in,column sep=.25in]
 & & |[alias=doma]|X^4\ar[ddrr,bend left,"A\otimes A"] & & \\
 & X^2\times X^3\ar[ur,"1^2\pi_{13}"]\ar[dr,"1^3\diag1"'] & && \\
 X^4\ar[ur,dashed]\ar[dr,dashed]\ar[rr,phantom, near start,"\ucorner"description] & &
 |[alias=coda]|X^6\ar[rr,"A\otimes A\otimes A"] & &\ca{V} \\
 & X^3\times X^2\ar[ur,"1\diag1^3"]\ar[dr,"\pi_{13}1^2"'] & & \\
 & & |[alias=codb]|X^4\ar[uurr,bend right,"A\otimes A"'] &
 \arrow[Rightarrow,from=doma,to=coda,"1\otimes\comlt",shorten >=.1in,shorten <=.1in]
 \arrow[Rightarrow,from=coda,to=codb,"\mlt\otimes1",shorten >=.1in,shorten <=.1in]
 \end{tikzcd}
\end{displaymath}
which uses the isomorphism from \cref{X2Frobenius}. The existence of such a 2-isomorphism in $\Span|\ca{V}$ comes with an equality \cref{2cellsSpanV} of composite arrows in $\ca{V}$
\begin{gather*}
%((\phi\otimes1)\circ(1\otimes\alpha))_{xyzw}\colon 
A_{xy}\otimes A_{yz}\xrightarrow{1\otimes\comlt_{ywz}}A_{xy}\otimes A_{yw}\otimes A_{wz}
\xrightarrow{\mlt_{xyw}\otimes1}A_{xw}\otimes A_{wz}
 \\
%(\alpha\circ\phi)_{xyzw}\colon
A_{xy}\otimes A_{yz}\xrightarrow{\mlt_{xyz}}A_{xz}\xrightarrow{\comlt_{xwz}}A_{xw}\otimes A_{wz}
\end{gather*}
which is precisely the one of the two conditions for a Frobenius $\Vv$-category~\cref{frob1}. The second condition can be checked similarly, and the axioms hold since those in $\Span$ hold by \cref{2cellsequality} as in earlier proofs.
\end{proof}

Hence we have a bijective correspondence between Frobenius monoids in $\Span|\Vv$ over $X^2$
and Frobenius $\ca{V}$-categories, for any monoidal category $\Vv$. Regarding Frobenius $\Vv$-functors of~\cref{Frobeniusfunctor},
by \cref{prop:Vfunctors} we can express them as morphisms between Frobenius pseudomonoids.

\begin{corollary}
An oplax pseudomonoid and oplax pseudocomonoid morphism of the form $^\id\alpha^{f\times f}$ in $\Span|\Vv$ is a Frobenius $\Vv$-functor. 
\end{corollary}
%Christina: something with opposites feels a bit off here....

We can thus realize $\FrobCat{\Vv}$ as the subcategory
of $\sf{Frob}_{\opl,\opl}(\Span|\Vv)$ of Frobenius pseudomonoids in $\Span|\Vv$
over $X^2$ with the groupoid pseudo(co)monoid structures, and morphisms of the form $^\id\alpha^{f\times f}$.

For purposes of completeness, we conclude with the following result.
\begin{proposition}
A Frobenius (strict) monoid in $\Span|\Vv$ over $X^2$ with the trivial monoid and comonoid structure
is a $\Frob(\Vv)$-graph, whereas a oplax monoid and comonoid morphism $^{\id}\alpha^{f{\times}f}$ is a $\Frob(\Vv)$-graph morphism.
\end{proposition}

\begin{proof}
This can be deduced in a straightforward way from \cref{Vgraphsascomonoids} and its dual
by checking the Frobenius conditions. For morphisms, \cref{prop:ComonVgraphmaps} and its dual
suffice.
\end{proof}

Based on the above, the braided monoidal bicategory $\Span|\Vv$ serves indeed as the common framework for Hopf $\Vv$-categories and Frobenius $\Vv$-categories, which are respectively expressed as oplax Hopf monoids and Frobenius pseudomonoids therein. The fact that subsequent work \cite{paper1b} extends the Larson-Sweedler theorem between such many-object generalized structures may in particular indicate a correspondence between these relaxed notions of oplax bimonoids and their Frobenius counterparts in higher structures, yet to be investigated.

\appendix

\section{}

\subsection{Pseudomonoid, oplax maps and 2-cells axioms}\label{sec:oplaxmaps}

A pseudomonoid $A$ with constraints $\alpha,\ell,r $ as in~\cref{alphalambdarho} satisfy the following two axioms
\begin{displaymath}
\begin{tikzcd}
A   A   A   A\ar[rr,"1   1   m"]\ar[d,"m  1  1"']\ar[dr,"1   m  1"description] &\ar[dr,phantom,"\stackrel{1  \alpha}{\cong}"] & A   A   A\ar[d,"1   m "] \\
A   A   A\ar[r,phantom,"\stackrel{\alpha  1}{\cong}"]\ar[d,"m  1"'] & A   A   A\ar[dr,phantom,"\stackrel{\alpha}{\cong}"]\ar[r,"1   m"description]\ar[dl,"m  1"description] & A   A\ar[d,"m"] \\
A   A\ar[rr,"m"'] && A
 \end{tikzcd}=
  \begin{tikzcd}
A   A   A   A\ar[dr,phantom,"\stackrel{c_{m,m}}{\cong}"]\ar[rr,"1  1   m"]\ar[d,"m  1  1"'] && A   A   A\ar[d,"1   m"]\ar[dl,"m  1"description] \\
A   A   A\ar[d,"m  1"']\ar[r,"1   m"description] & A   A\ar[r,phantom,"\stackrel{\alpha}{\cong}"]\ar[ld,phantom,"\stackrel{\alpha}{\cong}"]\ar[dr,"m"description] & A   A\ar[d,"m"] \\
A   A\ar[rr,"m"'] && A
 \end{tikzcd}
\end{displaymath}

\begin{equation}\label{eq:psmoneqs}
\adjustbox{scale=.85,center}{
\begin{tikzcd}[sep=.5in]
A   A\ar[dr,equal]\ar[d,shift left=6,phantom,"\stackrel{1r}{\cong}"]\ar[d,"1\uni1"'] & \\
 A   A   A\ar[r,"1\mlt"description]\ar[dr,phantom,"\stackrel{\alpha}{\cong}"]\ar[d,"\mlt1"'] & A   A\ar[d,"\mlt"] \\
 A   A\ar[r,"\mlt"] & A
\end{tikzcd}=
\begin{tikzcd}[sep=.5in]
 A   A\ar[d,"1\uni1"']\ar[dr,equal] & \\
 A   A   A\ar[d,"\mlt1"']\ar[r,phantom,"\stackrel{\ell1}{\cong}"] & A   A\ar[d,"\mlt"]\ar[dl,equal] \\
 A   A\ar[r,"\mlt"'] & A
\end{tikzcd}}
\end{equation}

A 1-cell $f\colon A\to B$ between pseudomonoids comes with 2-cells $(\phi,\phi_0)$ as in \cref{oplax2cells} that satisfy
\begin{equation}\label{eq:pseudomonmap}
\begin{tikzcd}[column sep=.3in]
& A     A\ar[r,"\mlt"]\ar[d,phantom,"\stackrel{\alpha}{\cong}"description] & A\ar[dr,"f"] & \\
A     A     A\ar[r,"\mlt    1"]\ar[ur,"1    \mlt"]\ar[dr,"f     f     f"'] &
A     A\ar[ur,"\mlt"]\ar[dr,"f     f"]\ar[rr,phantom,"\Downarrow{\scriptstyle\phi}"description]
\ar[d,phantom,"\Downarrow{\scriptstyle\phi    1_f}"description] && B \\
& B     B     B\ar[r,"\mlt    1"'] & B     B\ar[ur,"\mlt"']
\end{tikzcd} 
=
\begin{tikzcd}[column sep=.3in]
& A     A\ar[r,"\mlt"]\ar[dr,"f     f"']\ar[dd,phantom,"\Downarrow{\scriptstyle1_f    \phi}"description] &
A\ar[dr,"f"]\ar[d,phantom,"\Downarrow{\scriptstyle\phi}"description] & \\
A     A     A\ar[dr,"f     f     f"']\ar[ur,"1    \mlt"] & & B     B\ar[r,"\mlt"]
\ar[d,phantom,"\stackrel{\alpha}{\cong}"description] & B \\
& B     B     B\ar[ur,"B    \mlt"]\ar[r,"\mlt    1"'] & B     B\ar[ur,"\mlt"']
\end{tikzcd}
\end{equation}
\begin{displaymath}
\begin{tikzcd}[column sep=.25in]
& |[alias=doma]|A     A\ar[r,"\mlt"]\ar[dr,"f     f"']\ar[drr,phantom,"\Downarrow{\scriptstyle \phi}"description] & A\ar[dr,"f"] & \\
A\cong A     I\ar[ur,"1    \uni"]\ar[rr,"f    \uni"'{name=coda}]\ar[rru,bend left=60,"\id_A","\ell\cong"']
\ar[rrr,bend right=20,"f"',"\ell\cong"] &&
B     B\ar[r,"\mlt"'] & B
\arrow[Rightarrow, from=doma, to=coda,"1_f    \phi_0\;\;"',shorten <=.5em, shorten >=.5em]
\end{tikzcd}=
\begin{tikzcd}[column sep=.25in]
\hole \\
A\ar[rr,bend left,"f"]\ar[rr,bend right,"f"']\ar[rr,phantom,"\Downarrow{\scriptstyle1_f}"description] && B \\
\hole
\end{tikzcd}=
\begin{tikzcd}[column sep=.25in]
& |[alias=doma]|A     A\ar[r,"\mlt"]\ar[dr,"f     f"']\ar[drr,phantom,"\Downarrow{\scriptstyle \phi}"description] & A\ar[dr,"f"] & \\
A\cong I     A\ar[ur,"\uni    1"]\ar[rr,"\uni     f"'{name=coda}]\ar[rru,bend left=60,"\id_A","r \cong"']
\ar[rrr,bend right=20,"f"',"r \cong"] &&
B     B\ar[r,"\mlt"'] & B
\arrow[Rightarrow, from=doma, to=coda,"\phi_0    1_f\;\;"',shorten <=.5em, shorten >=.5em]
\end{tikzcd}
\end{displaymath}

A 2-cell $\beta\colon f\Rightarrow g$ between two oplax 1-cells of pseudomonoids $A$ and $B$ satisfies
\begin{equation}\label{monoidal2cell}
\begin{tikzcd}[row sep=.15in]
& A\ar[rd,bend left=20,"f"] & \\
A   A\ar[ur,bend left=20,"\mlt"]\ar[dr,bend left,"f   f"]\ar[dr,bend right,"g   g"']
\ar[rr,phantom,"\Downarrow{\scriptstyle\phi}"{description,near end}]
\ar[dr,phantom,"\Downarrow{\scriptstyle\alpha  \alpha}"description] && B \\
& B   B\ar[ur,bend right=20,"\mlt"'] &
\end{tikzcd}
\quad=\quad
\begin{tikzcd}[row sep=.1in]
& A\ar[rd,bend left,"f"]\ar[dr,bend right,"g"']\ar[dr,phantom,"\Downarrow{\scriptstyle\alpha}"description] & \\
A   A\ar[ur,bend left=20,"\mlt"]\ar[dr,bend right=20,"g   g"']\ar[rr,phantom,"{\scriptstyle\psi}\Downarrow"{description,near start}]
&& B \\
& B   B\ar[ur,bend right=20,"\mlt"'] &
\end{tikzcd}
\end{equation}
\begin{displaymath}
\begin{tikzcd}[row sep=.1in,column sep=.6in]
& A\ar[dr,bend left=20,"f"] & \\
I\ar[ur,bend left=20,"\uni"]\ar[rr,bend right,"\uni"']\ar[rr,phantom,"\Downarrow{\scriptstyle\phi_0}"description] && B
\end{tikzcd}
\quad=\quad
\begin{tikzcd}[row sep=.15in,column sep=.6in]
& A\ar[dr,bend left,"f"]\ar[dr,bend right,"g"']\ar[dr,phantom,"\Downarrow{\scriptstyle \alpha}"description] & \\
I\ar[ur,bend left=20,"\uni"]\ar[rr,bend right,"\uni"']\ar[rr,phantom,"{\scriptstyle\psi_0}\Downarrow"{description,near start}] && B.
\end{tikzcd}
\end{displaymath}

\subsection{Oplax bimonoid axioms}\label{oplaxbimonoidaxioms}

An oplax bimonoid $(M,\mlt,\uni,\lcomlt,\lcouni,\theta,\theta_0,\chi,\chi_0)$ in a braided monoidal bicategory $\Kk$ as in \cref{oplaxbimonoid}, namely an object in $\PsComon(\PsMon_\opl(\Kk))$, satisfies a number of axioms listed below.

Explicitly, $(M,\mlt,\uni)$ comes with invertible associativity and unit 2-cells $(\alpha,\ell,r)$ satisfying \cref{eq:psmoneqs} and $(M,\lcomlt,\lcouni)$ come with invertible coassociativity and unit 2-cells $(\beta,s,t)$ satisfying dual axioms. Moreover, \cref{ax1,ax2} express that $\lcomlt\colon M\to M\otimes M$ equipped with $\theta,\theta_0$ is an oplax morphism between pseudomonoids as in \cref{eq:pseudomonmap}; conditions~\cref{ax3,ax4} express that $\lcouni\colon M\to I$ equipped with $\chi,\chi_0$ is an oplax morphism between pseudomonoids; conditions \cref{ax5,ax6} express that the coassociativity isomorphism $\beta$ is a monoidal 2-cell namely satisfies~\cref{monoidal2cell}; and conditions~\cref{ax7,ax8,ax9,ax10} do the same for the left and right counit isomorphisms $s,t$.

All empty faces are filled appropriately (co)associativity and (co)unit isomorphisms, or by coherence isomorphisms coming from the braided monoidal structure on the bicategory (Gray monoid) $\Kk$.

%1
\begin{equation}\label{ax1}
\resizebox{0.9\hsize}{!}{$
\cd[@C-1em]{
 & & M^3 \ar[r]^{\lcomlt 11} \ar[dd]_{1\mlt} \ar[ddll]_{\mlt 1} & M^4 \ar[r]^{11\lcomlt \lcomlt} \ar[dd]_{11\mlt} \ar@{}[ddr]|{11 {\theta} \Uparrow} & M^6 \ar[d]^{111\braid 1} \\
 & & & & M^6 \ar[d]^{11\mlt\mlt} \\
 M^2 \ar[ddrr]_{\mlt} & %\stackrel{\alpha}{\cong} 
 & M^2 \ar[dd]_{\mlt} \ar[r]^{\lcomlt 1} & M^3 \ar[r]^{11\lcomlt} & M^4 \ar[d]^{1\braid 1} \\ 
 & & & {\theta} \Uparrow & M^4 \ar[d]^{\mlt\mlt} \\
 & & M \ar[rr]_{\lcomlt} & & M^2
}
\quad = \quad
\cd[@C-1em]{
 M^3 \ar[r]^{11\lcomlt} \ar[dd]_{\mlt 1} & M^4 \ar[r]^{\lcomlt \lcomlt 11} \ar[dd]_{\mlt 11} \ar@{}[ddr]|{{\theta} 11 \Uparrow} & M^6 \ar[d]|{1\braid 111} \ar[dr]^{111\braid 1} & & \\
 & & M^6 \ar[d]^{\mlt\mlt 11} & M^6 \ar[dr]^{11\mlt\mlt} & \\
 M^2 \ar[dd]_{\mlt} \ar[r]^{1\lcomlt} & M^3 \ar[r]^{\lcomlt 11} & M^4 \ar[d]^{1\braid 1} &  & M^4 \ar[dl]^{1\braid 1} \\ 
 & {\theta} \Uparrow & M^4 \ar[d]^{\mlt\mlt} & \ar[dl]^{\mlt\mlt} M^4 & \\
 M \ar[rr]_{\lcomlt} & & M^2 & & 
}$}
\end{equation}
%2
\begin{equation}\label{ax2}
\cd[@C-1em]{
M \ar[r]^{\lcomlt} \ar[d]|{\uni 1} \ar@/_3em/[ddd]_{1} & M^2 \ar[r]^{1} \ar[d]_{\uni 11} \ar@{}[dr]|{{\theta_0} 11 \Uparrow} & M^2 \ar[d]^{ \uni\uni 1} \ar@/^3em/[ddd]^{1} \\
M^2 \ar@{}[ddrr]|{{\theta} \Uparrow} \ar[dd]^{\mlt} \ar[r]^{1\lcomlt} & M^3 \ar[r]^{\lcomlt 11} & M^4 \ar[d]_{1\braid 1} \\
& & M^4 \ar[d]_{\mlt\mlt} \\
M \ar[rr]_{\lcomlt} & & M^2 
}
\; = \; 
\id_{\lcomlt}
\; = \; 
\cd[@C-1em]{
M \ar[r]^{\lcomlt} \ar[d]|{1\uni} \ar@/_3em/[ddd]_{1} & M^2 \ar[r]^{1} \ar[d]_{11\uni} \ar@{}[dr]|{11{\theta_0} \Uparrow} & M^2 \ar[d]^{11\uni \uni} \ar@/^3em/[ddd]^{1} \\
M^2 \ar@{}[ddrr]|{{\theta} \Uparrow} \ar[dd]^{\mlt} \ar[r]^{\lcomlt 1} & M^3 \ar[r]^{11\lcomlt} & M^4 \ar[d]_{1\braid 1} \\
& & M^4 \ar[d]_{\mlt\mlt} \\
M \ar[rr]_{\lcomlt} & & M^2 
} 
\end{equation}
%1'
\begin{equation}\label{ax3}
\cd[]{
& M^3 \ar[d]_{\mlt 1} \ar[r]^{\lcouni \lcouni 1} \ar@/_1em/[dl]_{1\mlt} \ar@{}[dr]|{{\chi} 1 \Uparrow} & M \ar[d]^{1}\\
M^2 \ar@/_1em/[dr]_{\mlt}  & M^2 \ar[d]_{\mlt} \ar[r]^{\lcouni 1} \ar@{}[dr]|{{\chi} \Uparrow} & M \ar[d]^{\lcouni}\\
& M \ar[r]_{\lcouni} & I
}
\; = \;
\cd[]{
M^3 \ar[d]_{1\mlt} \ar[r]^{1\lcouni \lcouni} \ar@{}[dr]|{1 {\chi} \Uparrow} & M \ar[d]^{1}\\
M^2 \ar[d]_{\mlt} \ar[r]^{1\lcouni} \ar@{}[dr]|{{\chi} \Uparrow} & M \ar[d]^{\lcouni}\\
M \ar[r]_{\lcouni} & I
}
\end{equation}
%2'
\begin{equation}\label{ax4}
\cd[]{
 & & M \ar[dr]^{\lcouni} & \\
 M \ar[r]^{\uni 1} \ar@/^1em/[urr]^{1} \ar@/_1em/[drr]_{1} 
 & M^2 \ar[ur]^{\lcouni 1} \ar[dr]_{\mlt} \ar@{}[u]|{{\chi_0} 1 \Uparrow} \ar@{}[rr]|{{\chi} \Uparrow} & & I \\
 & & M \ar[ur]_{\lcouni} & 
}
\; = \;
\id_{\lcouni}
\; = \;
\cd[]{
 & & M \ar[dr]^{\lcouni} & \\
 M \ar[r]^{1\uni} \ar@/^1em/[urr]^{1} \ar@/_1em/[drr]_{1} 
 & M^2 \ar[ur]^{1\lcouni} \ar[dr]_{\mlt} \ar@{}[u]|{1{\chi_0} \Uparrow} \ar@{}[rr]|{{\chi} \Uparrow} & & I \\
 & & M \ar[ur]_{\lcouni} & 
}
\end{equation}

\begin{equation}\label{ax5}
%1
\begin{tikzcd}[column sep=.5in, row sep=.3in]
M^2 \ar[ddd,"\mlt"'] \ar[r,"\lcomlt\lcomlt"{name=coda}] & M^4 \ar[d,"\mlt 11.1\braid 1"'] \ar[r,"\lcomlt 1\lcomlt 1"] & M^6 \ar[d,"1111\mlt .\braid.\braid\lcomlt"] \\
 & M^4 \ar[dd,"\mlt 1"'] \ar[r,"\lcomlt\lcomlt 1"{name=codb}] & M^5 \ar[d,"1\braid 11"] \\
 & & M^5 \ar[d,"\mlt\mlt 1"] \\
M \ar[dr,"\lcomlt"'] \ar[r,"\lcomlt"{name=doma}] & M^2 \ar[r,"\lcomlt 1"{name=domb}] & M^3 \\
 & M^2 \ar[ur,"1\lcomlt"'] & 
\arrow[Rightarrow,from=doma,to=coda,shorten >=4em, shorten <=4em, "\theta"]
\arrow[Rightarrow,from=domb,to=codb,shorten >=2em, shorten <=2em, "\theta 1"]
\end{tikzcd}
\quad = \quad
\begin{tikzcd}[column sep=.5in, row sep=.3in]
 & M^2 \ar[dr,"\lcomlt 1\lcomlt 1"] & \\
M^2 \ar[ddd,"\mlt"'] \ar[r,"\lcomlt\lcomlt"{name=coda}] \ar[ur,"\lcomlt\lcomlt"] & M^4 \ar[d,"\mlt 11.1\braid 1"'] \ar[r,"1\lcomlt 1\lcomlt"] & M^6 \ar[d,"1111\mlt .\braid.\braid\lcomlt"] \\
 & M^4 \ar[dd,"1\mlt"'] \ar[r,"1 \lcomlt\lcomlt"{name=codb}] & M^5 \ar[d,"11\braid 1"] \\
 & & M^5 \ar[d,"1\mlt\mlt"] \\
M \ar[r,"\lcomlt"'{name=doma}] & M^2 \ar[r,"1\lcomlt"'{name=domb}] & M^3 
\arrow[Rightarrow,from=doma,to=coda,shorten >=4em, shorten <=4em, "\theta"]
\arrow[Rightarrow,from=domb,to=codb,shorten >=2em, shorten <=2em, "1\theta"]
\end{tikzcd}
\end{equation}

\begin{equation}\label{ax6}
%1
\begin{tikzcd}[column sep=.5in, row sep=.3in]
I \ar[dd,"\uni"'] \ar[r,"\sim"{name=coda}] & 
II \ar[d,"1\uni"] \ar[r,"\sim"] & III \ar[d,"11\uni"] \\
 & IM \ar[d,"\uni 1"] \ar[r,"\sim"{name=codb}] & IIM \ar[d,"\uni\uni 1"] \\
M \ar[dr,"\lcomlt"'] \ar[r,"\lcomlt"'{name=doma}] & M^2 \ar[r,"\lcomlt 1"'{name=domb}] & M^3 \\
 & M^2 \ar[ur,"1\lcomlt"'] & 
\arrow[Rightarrow,from=doma,to=coda,shorten >=2em, shorten <=2em, "\theta_0"]
\arrow[Rightarrow,from=domb,to=codb,shorten >=1em, shorten <=1em, "1\theta_0"']
\end{tikzcd}
\quad = \quad
\begin{tikzcd}[column sep=.5in, row sep=.3in]
 & II \ar[dr,"\sim"] & \\
I \ar[dd,"\uni"'] \ar[r,"\sim"{name=coda}] \ar[ur,"\sim"] & II \ar[d,"\uni 1"] \ar[r,"\sim"] & III \ar[d,"\uni 11"] \\
 & MI \ar[d,"1\uni"] \ar[r,"\sim"{name=codb}] & MII \ar[d,"1\uni\uni"] \\
M \ar[r,"\lcomlt"'{name=doma}] & M^2 \ar[r,"1\lcomlt"'{name=domb}] & M^3 
\arrow[Rightarrow,from=doma,to=coda,shorten >=2em, shorten <=2em, "\theta_0"]
\arrow[Rightarrow,from=domb,to=codb,shorten >=1em, shorten <=1em, "\theta_0 1"']
\end{tikzcd}
\end{equation}

\begin{equation}\label{ax7}
\begin{tikzcd}[column sep=.5in, row sep=.3in]
M^2 \ar[rr,"\sim", bend left]  \ar[dd,"\mlt"'] \ar[r,"\lcomlt\lcomlt"{name=coda}] & M^4 \ar[d,"\mlt 11.1\braid 1"] \ar[r,"1\lcouni 1\lcouni "] & MIMI \ar[d,"m11.1\braid 1"] \\
& M^3 \ar[d,"1\mlt"] \ar[r,"1\lcouni \lcouni "{name=codb}] & MII \ar[d,"\sim"] \\
M \ar[rr,"\sim"', bend right] \ar[r,"\lcomlt"{name=doma}] & M^2 \ar[r,"1\lcouni"{name=domb}] & MI
\arrow[Rightarrow,from=doma,to=coda,shorten >=2em, shorten <=2em, "\theta"]
\arrow[Rightarrow,from=domb,to=codb,shorten >=.6em, shorten <=.6em, "1 \chi"']
\end{tikzcd}
\quad = \quad 
\begin{tikzcd}[column sep=.5in, row sep=.3in]
M^2 \ar[r,"\sim"]  \ar[dd,"\mlt"'] & MIMI \ar[d,"m11.1\braid 1"] \\
& MII \ar[d,"\sim"] \\
M \ar[r,"\sim"'] & MI
\end{tikzcd}
\end{equation}

\begin{equation}\label{ax8}
\begin{tikzcd}[column sep=.5in, row sep=.3in]
I \ar[rr,"\sim", bend left]  \ar[dd,"\uni"'] \ar[r,"\sim"{name=coda}] & II \ar[d,"\uni 1"] \ar[r,"1"] & II \ar[d,"\uni 1"] \\
& MI \ar[d,"1\uni"] \ar[r,"1"{name=codb}] & MI \ar[d,"1"] \\
M \ar[rr,"\sim"', bend right] \ar[r,"\lcomlt"{name=doma}] & M^2 \ar[r,"1\lcouni "{name=domb}] & MI
\arrow[Rightarrow,from=doma,to=coda,shorten >=2em, shorten <=2em, "\theta_0"]
\arrow[Rightarrow,from=domb,to=codb,shorten >=.6em, shorten <=.6em, "1 \chi_0"']
\end{tikzcd}
\quad = \quad 
\begin{tikzcd}[column sep=.5in, row sep=.3in]
I \ar[r,"\sim"]  \ar[dd,"\uni"'] & II \ar[d,"\uni 1"] \\
& MI \ar[d,"\id"] \\
M \ar[r,"\sim"'] & MI
\end{tikzcd}
\end{equation}

\begin{equation}\label{ax9}
\begin{tikzcd}[column sep=.5in, row sep=.3in]
M^2 \ar[rr,"\sim", bend left]  \ar[dd,"\mlt"'] \ar[r,"\lcomlt\lcomlt"{name=coda}] & M^4 \ar[d,"11\mlt.1\braid 1"] \ar[r,"\lcouni 1\lcouni 1"] & IMIM \ar[d,"11m.1\braid 1"] \\
& M^3 \ar[d,"\mlt 1"] \ar[r,"\lcouni \lcouni 1"{name=codb}] & IMI \ar[d,"\sim"] \\
M \ar[rr,"\sim"', bend right] \ar[r,"\lcomlt"{name=doma}] & M^2 \ar[r,"\lcouni 1"{name=domb}] & IM
\arrow[Rightarrow,from=doma,to=coda,shorten >=2em, shorten <=2em, "\theta"]
\arrow[Rightarrow,from=domb,to=codb,shorten >=.6em, shorten <=.6em, "\chi 1"']
\end{tikzcd}
\quad = \quad 
\begin{tikzcd}[column sep=.5in, row sep=.3in]
M^2 \ar[r,"\sim"]  \ar[dd,"\mlt"'] & IMIM \ar[d,"11m.1\braid 1"] \\
& MII \ar[d,"\sim"] \\
M \ar[r,"\sim"'] & MI
\end{tikzcd}
\end{equation}

\begin{equation}\label{ax10}
\begin{tikzcd}[column sep=.5in, row sep=.3in]
I \ar[rr,"\sim", bend left]  \ar[dd,"\uni"'] \ar[r,"\sim"{name=coda}] & II \ar[d,"1\uni"] \ar[r,"1"] & II \ar[d,"1\uni"] \\
& IM \ar[d,"\uni 1"] \ar[r,"1"{name=codb}] & IM \ar[d,"1"] \\
M \ar[rr,"\sim"', bend right] \ar[r,"\lcomlt"{name=doma}] & M^2 \ar[r,"\lcouni 1"{name=domb}] & IM
\Twocell{doma}{coda}{2em}{"\theta_0"}
\arrow[Rightarrow,from=domb,to=codb,shorten >=.6em, shorten <=.6em, "\chi_0 1"']
\end{tikzcd}
\quad = \quad 
\begin{tikzcd}[column sep=.5in, row sep=.3in]
I \ar[r,"\sim"]  \ar[dd,"\uni"'] & II \ar[d,"1\uni"] \\
& IM \ar[d,"1"] \\
M \ar[r,"\sim"'] & IM
\end{tikzcd}
\end{equation}
%In these diagrams unnamed isomorphisms arise from monoidal coherence of $A$ and of $A^2$, and empty cells are basic equalities that follow from coherence in any monoidal bicategory.

\subsection{Oplax bimonoid morphisms axioms}\label{oplaxbimonoidmapaxioms}

Below we give the axioms for \cref{def:bimonoidmap} where
an oplax bimonoid morphism between is a 1-cell $(f,\phi,\phi_0,\psi,\psi_0) \colon M \to N$ in $\PsComon_\opl(\PsMon_\opl(\ca{K}))$, namely satisfying the oplax pseudomonoid and pseudocomonoid morphism axioms \cref{eq:pseudomonmap} and moreover $\psi,\psi_0$ are monoidal 2-cells \cref{monoidal2cell}, i.e.

\begin{equation}\label{eq:bimonmapax1}
\begin{tikzcd}[column sep=.3in]
& M\ar[rr,"\lcomlt"]\ar[dr,phantom,"\Downarrow{\scriptstyle\theta}"description] && M  M\ar[dr,bend left=15,"f  f"] & \\
M    M\ar[r,"\lcomlt   \lcomlt"]\ar[ur,bend left=15,"\mlt"]\ar[dr,bend right=15,"f    f"'] &
M^{ 4}\ar[r,"1\braid1"]\ar[dr,"f^{ 4}"description]\ar[d,phantom,"\Downarrow{\scriptstyle\psi \psi}"description] &
M^{   4}\ar[ur,"\mlt \mlt"description]\ar[dr,"f^{   4}"description]%\ar[d,phantom,"\scriptstyle="]
\ar[rr,phantom,"\Downarrow{\scriptstyle\phi \phi}"description]
&& N  N \\
& N    N\ar[r,"\lcomlt   \lcomlt"'] & N^{   4}\ar[r,"1\braid1"'] & N^{ 4}\ar[ur,bend right=15,"\mlt   \mlt"'] &
\end{tikzcd}=
\begin{tikzcd}[column sep=.2in]
& M\ar[r,"\lcomlt"]\ar[dr,"f"description]\ar[dd,phantom,"\Downarrow{\scriptstyle\phi}"description] &
M  M\ar[drr,bend left,"f  f"]\ar[d,phantom,"\Downarrow{\scriptstyle\psi}"description] && \\
M    M\ar[dr,bend right=15,"f    f"']\ar[ur,bend left=15,"\mlt"] & & N\ar[rr,"\lcomlt"]
\ar[dr,phantom,"\Downarrow{\scriptstyle\xi}"description] && N \\
& N    N\ar[ur,"\mlt"description]\ar[r,"\lcomlt   \lcomlt"'] & N^{   4}\ar[r,"1\braid1"'] &
N^{ 4}\ar[ur,bend right=15,"\mlt \mlt"']
\end{tikzcd}
\end{equation}
\begin{equation}
\begin{tikzcd}
& M\ar[r,"\lcomlt"]\ar[d,phantom,"\Downarrow{\scriptstyle\theta_0}"description] &
M  M\ar[dr,bend left=20,"f  f"] & \\
I\ar[ur,bend left=15,"\uni"]\ar[urr,bend right=15,"\uni \uni"']\ar[rrr,bend right=20,"\uni \uni"'] &
\phantom{M} & \Downarrow{\scriptstyle\phi_0 \phi_0}& N  N
\end{tikzcd}=
\begin{tikzcd}
& M\ar[r,"\lcomlt"]\ar[dr,"f"']\ar[d,phantom,"\Downarrow{\scriptstyle\phi_0}"description] &
M  M\ar[dr,bend left=20,"f  f"]\ar[d,phantom,"\Downarrow{\scriptstyle\psi}"description] & \\
I\ar[ur,bend left=15,"\uni"]\ar[rr,"\uni"']\ar[rrr,bend right=20,"\uni \uni"',"\Downarrow{\scriptstyle\xi_0}"] &
\phantom{M} & N\ar[r,"\lcomlt"'] & N  N
\end{tikzcd}
\end{equation}
\begin{equation}
\begin{tikzcd}[row sep=.2in]
&& M\ar[dr,bend left=20,"\lcouni"]\ar[dd,phantom,"\Downarrow{\scriptstyle\chi}"description] & \\
M\ar[urr,bend left=25,"\mlt"]\ar[rrd,bend left=20,"\lcouni \lcouni"]\ar[dr,bend right=15,"f  f"']
\ar[drr,phantom,"\Downarrow{\scriptstyle\psi_0 \psi_0}"description] &&& I \\
& N  N\ar[r,"\lcouni \lcouni"'] & I    I\ar[ur,bend right=20,"\sim"'] &
\end{tikzcd}=
\begin{tikzcd}[row sep=.2in]
& M\ar[rrd,bend left=20,"\lcouni","\Downarrow{\scriptstyle\psi_0}"']\ar[dr,"f"']\ar[dd,phantom,"\Downarrow{\scriptstyle\phi}"description] && \\
M  M\ar[ur,bend left=20,"\mlt"]\ar[dr,bend right=20,"f  f"'] && N\ar[r,"\lcouni"]\ar[d,phantom,"\Downarrow{\scriptstyle\omega}"description]
& I \\
& N  N\ar[r,"\lcouni \lcouni"']\ar[ur,"\mlt"] & I  I\ar[ur,bend right=20,"\sim"'] &
\end{tikzcd}
\end{equation}
\begin{equation}
\begin{tikzcd}[row sep=.15in]
& M\ar[dr,bend left=20,"\lcouni"] & \\
I\ar[ur,bend left=20,"\uni"]\ar[rr,bend right=25,"\sim"']\ar[rr,phantom,"\Downarrow{\scriptstyle\chi_0}"] && I
\end{tikzcd}=
\begin{tikzcd}[row sep=.2in]
& M\ar[drr,bend left=20,"\lcouni","\Downarrow{\scriptstyle\psi_0}"']\ar[dr,"f"description] && \\
I\ar[ur,bend left=20,"\uni"]\ar[rr,"\uni"',"\Downarrow{\scriptstyle\phi_0}"]
\ar[rrr,bend right=20,"\sim"',"\Downarrow{\scriptstyle\omega_0}"] && N\ar[r,"\lcouni"'] & I
\end{tikzcd}
\end{equation}

More precisely, for example recall that the 2-cell $\psi\colon (f\ot f)\circ \lcomlt\Rightarrow\lcomlt\circ f$ from \cref{oplaxbimonoidmap} has as codomain the pseudomonoid morphism whose structure maps arise as composites from the ones for $\lcomlt$ and $f$, for example
\begin{displaymath}
 \begin{tikzcd}
  M    M\ar[rr,"\mlt"]\ar[d,"f  f"'] \ar[drr, phantom,"\scriptstyle\Downarrow\phi"] && M\ar[d,"f"] \\
  N  N\ar[rr,"\mlt"]\ar[d,"\lcomlt \lcomlt"']\ar[drr,phantom,"\scriptstyle\Downarrow\xi"] && N\ar[d,"\lcomlt"] \\
  N  N  N  N\ar[r,"1 \braid 1"'] & N  N  N  N\ar[r,"\mlt \mlt"'] & N  N
 \end{tikzcd}
\end{displaymath}
and similarly for its domain, where the monoidal product of pseudomonoid maps using the braiding is needed, giving
\begin{displaymath}
\begin{tikzcd}
 M  M\ar[drr,phantom,"\scriptstyle\Downarrow\theta"]\ar[rr,"\mlt"] \ar[d,"\lcomlt \lcomlt"'] && M\ar[d,"\lcomlt"] \\
 M  M  M  M\ar[r,"1 \braid 1"]\ar[d,"f  f  f  f"'] & M  M   M   M\ar[d,"f^{  4}"] \ar[r,"\mlt  \mlt"]\ar[dr,phantom,"\scriptstyle\Downarrow\phi \phi"] & M  M\ar[d,"f  f"] \\
 N  N  N  N\ar[r,"1 \braid 1"'] & N   N   N   N\ar[r,"\mlt \mlt"'] & N  N
\end{tikzcd}
 \end{displaymath}
which are both used in axiom \cref{eq:bimonmapax1}.

\subsection{Oplax module, morphism and transformation axioms}\label{oplaxmoduleaxioms}

We detail the conditions satisfied by the structure maps of a right oplax module $(X,\rho,\xi,\xi_0)$ for a pseudomonoid $(M,\mlt,\uni,\alpha,r ,\ell)$ as defined in \cref{def:oplaxmod}.

\begin{equation}\label{oplaxmod1}
\begin{tikzcd}
XMMM\ar[rr,"11\mlt"]\ar[drr,phantom,"\stackrel{1\alpha}{\cong}"]\ar[d,"\rho11"']\ar[dr,"1\mlt1"description] && XMM\ar[d,"1\mlt"] \\
XMM\ar[r,phantom,"\Downarrow{\scriptstyle\xi1}"description]\ar[d,"\rho1"'] & XMM\ar[r,"1\mlt"]\ar[dl,"\rho1"]\ar[dr,phantom,"\Downarrow{\scriptstyle\xi}"description]
& XM\ar[d,"\rho"] \\
XM\ar[rr,"\rho"'] && X
\end{tikzcd}\;=\;
\begin{tikzcd}
XMMM\ar[dr,phantom,"\stackrel{c_{\rho,\mlt}}{\cong}"]\ar[rr,"11\mlt"]\ar[d,"\rho11"'] && XMM\ar[dl,"\rho1"']\ar[d,"1\mlt"] \\
XMM\ar[r,"1\mlt"]\ar[drr,phantom,"\Downarrow{\scriptstyle\xi}"description]
\ar[d,"\rho1"'] & XM\ar[dr,"\rho"']\ar[r,phantom,"\Downarrow{\scriptstyle\xi}"description]
& XM\ar[d,"\rho"] \\
XM\ar[rr,"\rho"'] && X
\end{tikzcd}
\end{equation}

\begin{equation}\label{oplaxmod2}
\begin{tikzcd}
XM\ar[r,"1\uni1"]\ar[rr,bend left=35,"1"description]\ar[rr,bend left=20,phantom,"\stackrel{1r}{\cong}"]\ar[dr,phantom,"\Downarrow{\scriptstyle\xi_01}"description]\ar[dr,bend right,"1"'] & XMM\ar[d,"\rho1"'] \ar[r,"1\mlt"]\ar[dr,phantom,"\Downarrow{\scriptstyle\xi}"description] 
& XM\ar[d,"\rho"] \\
& XM\ar[r,"\rho"'] & X
\end{tikzcd}\;=\;\id_\rho
\end{equation}

%{\joost Remark the similarity with axioms \eqref{ax1}/\eqref{ax5} and \eqref{ax2}. Maybe \eqref{oplaxmod1} needs to be put in the same format as \eqref{ax1} or \eqref{ax5}, if possible of course.} {\chris Sorry, I saw this after I drew the diagrams! :( }
%{\joost In fact, something that is bothering me now is that we have both \eqref{ax1} and \eqref{ax5}... Could it be that there is a redundancy in these axioms ? Oh, it looks like one axiom gives a compatibitlity with horizontal composition, where the other gives a compatibility with vertical composition. So probably it corresponds in the reconstruction theorem with naturality and with being a module structure, which are also 2 different things.}

An oplax module morphism $f\colon X\to Y$ comes with a 2-cell $\phi\colon f\circ\rho\Rightarrow\rho\circ f\ot 1$ as in \cref{eq:oplaxmorphism} satisfying

\begin{equation}\label{oplaxmodmorph1}
\begin{tikzcd}
 XMM\ar[rr,"1\mlt"]\ar[d,"f11"']\ar[dr,"\rho1"]\ar[drr,phantom,"\Downarrow{\scriptstyle\xi}"] &  & XM\ar[d,"\rho"] \\
 YMM\ar[d,"\rho1"']\ar[r,phantom,"\Downarrow{\scriptstyle\phi1}"description] &XM\ar[r,"\rho"]\ar[dl,"f1"]
 \ar[dr,phantom,"\Downarrow{\scriptstyle\phi}"description] & X\ar[d,"f"] \\
 YM\ar[rr,"\rho"'] && Y
\end{tikzcd}\;=\;
\begin{tikzcd}
 XMM\ar[dr,phantom,"\stackrel{c_{f,\mlt}}{\cong}"]\ar[rr,"1\mlt"]\ar[d,"f11"'] && XM\ar[d,"\rho"]\ar[dl,"f1"'] \\
 YMM\ar[r,"1\mlt"]\ar[d,"\rho1"']\ar[drr,phantom,near start,"\Downarrow{\scriptstyle\xi}"description] & YM\ar[dr,"\rho"']\ar[r,phantom,"\Downarrow{\scriptstyle\phi}"description] & X\ar[d,"f"] \\
 YM\ar[rr,"\rho"'] && Y
\end{tikzcd}
\end{equation}

\begin{equation}\label{oplaxmodmorph2}
\begin{tikzcd}
X\ar[dr,phantom,bend left=10,"\stackrel{c_{f,\uni}}{\cong}"]\ar[rr,"1\uni"]\ar[d,"f"'] & & XM\ar[d,"\rho"]\ar[dl,"f1"'] \\
Y\ar[r,"1\uni"]\ar[drr,bend right,"1"description]\ar[drr,phantom,"\Downarrow{\scriptstyle\xi_0}"] & YM\ar[dr,"\rho"]\ar[r,phantom,"\Downarrow{\scriptstyle\phi}"description] & X\ar[d,"f"] \\
&& Y
\end{tikzcd}\;=\;
\begin{tikzcd}
 X\ar[rr,"1\uni"]\ar[drr,bend right=15,"1"']\ar[ddrr,bend right=20,"f"description]\ar[drr,phantom,bend left=5,"\Downarrow{\scriptstyle\xi_0}"] && XM\ar[d,"\rho"] \\
 && X\ar[d,"f"] \\
 && Y
\end{tikzcd}
\end{equation}

An oplax module transformation $\alpha\colon(f,\phi)\Rightarrow (g,\psi)$ satisfies 
\begin{equation}\label{oplaxmodtransax}
\begin{tikzcd}[sep=.6in]
XM\ar[r,"\rho"]\ar[d,bend right,"g1"']\ar[d,bend left,"f1"]\ar[d,phantom,"\stackrel{\alpha1}{\Leftarrow}"description] & |[alias=doma]|X\ar[d,"f"] \\
|[alias=coda]|YM\ar[r,"\rho"'] & Y
\arrow[Rightarrow,from=doma,to=coda,"\phi"',shorten >=.35in,shorten <=.35in]
\end{tikzcd}\;=\;
\begin{tikzcd}[sep=.6in]
XM\ar[d,"g1"']\ar[r,"\rho"] & |[alias=doma]|X\ar[d,bend left,"f"]\ar[d,bend right,"g"']\ar[d,phantom,"\stackrel{\alpha}{\Leftarrow}"description] \\
|[alias=coda]|YM\ar[r,"\rho"'] & Y
\arrow[Rightarrow,from=doma,to=coda,"\psi"',shorten >=.35in,shorten <=.35in]
\end{tikzcd}
\end{equation}

\subsection{Frobenius pseudomonoid axioms}\label{sec:Frobpseudo}

In \cite[Prop.~25]{LaudaFrobAlgs}, a definition for a pseudomonoid $A$ in a Gray monoid to be Frobenius is given, in terms of the existence of a counit and a map $I\to A\ot A$ along with two 2-isomorphisms satisfying axioms. We will here use an alternative definition that naturally generalizes the monoid-comonoid definition in the ordinary monoidal setting, using a subset of axioms of \cite[Lem.~32]{LaudaFrobAlgs}.

An object which is a pseudomonoid $(A,\mlt,\uni,\alpha,\ell,r )$ and a pseudocomonoid $(A,\comlt,\couni,\beta,s,t)$ is Frobenius when it comes equipped with 2-isomorphisms $\psi,\phi$ as in \cref{pseudoFrobcond}
subject to axioms that express that $\comlt\colon A\to A\ot A$ is a left $A$, right $A$ pseudomodule morphism and also $\mlt\colon A\ot A\to A$ is a left $A$, right $A$ pseudocomodule morphism. Explicitly, we have the following eight axioms where 
axioms \cref{eq:rightpsmod12} express that $\comlt$ is a right $A$-pseudomodule pseudomap between $(A,\mlt\colon A\ot A\to A)$ and $(A\ot A,1\ot\mlt\colon A\ot A\ot A\to A\ot A$) similarly to \cref{oplaxmodmorph1,oplaxmodmorph2},
axioms \cref{eq:leftpsmod12} express that $\comlt$ is a left $A$-pseudomodule pseudomap, axioms \cref{eq:leftpscomod12} express that $\mlt$ is a left $A$-pseudocomodule map between $(A,\comlt\colon A\to A\ot A)$ and $(A\ot A,\comlt\ot1\colon A\ot A\to A\ot A\ot A)$, and axioms \cref{eq:rightpscomod12} express that $\mlt$ is a right $A$-pseudocomodule map.
\begin{equation}\label{eq:rightpsmod12}
\begin{tikzcd}[column sep=.2in,row sep=.4in]
 AAA\ar[rr,"1\mlt"]\ar[d,"\comlt11"']\ar[dr,"\mlt1"description] && AA\ar[d,"\mlt"] \\
 AAAA\ar[d,"1\mlt1"'] & AA\ar[r,"\mlt"]\ar[l,phantom,"\scriptstyle\stackrel{\phi1}{\cong}"]\ar[dr,phantom,"\scriptstyle\stackrel{\phi}{\cong}"]\ar[ur,phantom,"\scriptstyle\stackrel{\alpha}{\cong}"]\ar[dl,"\comlt1"description] & A\ar[d,"\comlt"] \\
AAA\ar[rr,"1\mlt"'] && AA 
\end{tikzcd}{=}
\begin{tikzcd}[column sep=.2in,row sep=.4in]
 AAA\ar[rr,"1\mlt"]\ar[d,"\comlt11"']\ar[dr,phantom,"\scriptstyle\stackrel{c_{\comlt,m}}{\cong}"] && AA\ar[d,"\mlt"]\ar[dl,"\comlt1"description] \\
 AAAA\ar[d,"1\mlt1"']\ar[r,"11\mlt"] & AAA\ar[dr,"1\mlt"description]\ar[ld,phantom,"\scriptstyle\stackrel{1\alpha}{\cong}"]\ar[r,phantom,"\scriptstyle\stackrel{\phi}{\cong}"] & A\ar[d,"\comlt"] \\
AAA\ar[rr,"1\mlt"'] && AA 
\end{tikzcd}
\quad
\begin{tikzcd}[column sep=.2in,row sep=.4in]
A\ar[rr,"1\uni"]\ar[d,"\comlt"']\ar[dr,phantom,"\scriptstyle\stackrel{c_{\comlt,\uni}}{\cong}"] && AA\ar[d,"\mlt"]\ar[dl,"\comlt1"description]  \\
AA\ar[r,"11\uni"]\ar[drr,bend right,"\id"'] & AAA\ar[dr,"1\mlt"description]\ar[r,phantom,"\scriptstyle\stackrel{\phi}{\cong}"] & A\ar[d,"\comlt"] \\ 
\phantom{A}\ar[ur,phantom,near end,"\scriptstyle\stackrel{1r }{\cong}"] && AA
\end{tikzcd}{=}
\begin{tikzcd}[column sep=.2in,row sep=.4in]
A\ar[r,"1\uni"]\ar[dr,phantom,"\scriptstyle\stackrel{r }{\cong}"]\ar[d,"\comlt"']\ar[dr,bend right,"\id"description] & AA\ar[d,"\mlt"] \\
AA\ar[dr,"\id"'] & A\ar[d,"\comlt"] \\
& AA
\end{tikzcd}
\end{equation}

\begin{equation}\label{eq:leftpsmod12}
\begin{tikzcd}[column sep=.2in,row sep=.4in]
 AAA\ar[rr,"\mlt1"]\ar[d,"11\comlt"']\ar[dr,"1\mlt"description] && AA\ar[d,"\mlt"] \\
 AAAA\ar[d,"1\mlt1"'] & AA\ar[r,"\mlt"]\ar[l,phantom,"\scriptstyle\stackrel{1\psi}{\cong}"]\ar[dr,phantom,"\scriptstyle\stackrel{\psi}{\cong}"]\ar[ur,phantom,"\scriptstyle\stackrel{\alpha}{\cong}"]\ar[dl,"1\comlt"description] & A\ar[d,"\comlt"] \\
AAA\ar[rr,"\mlt1"'] && AA 
\end{tikzcd}{=}
\begin{tikzcd}[column sep=.2in,row sep=.4in]
 AAA\ar[rr,"\mlt1"]\ar[d,"11\comlt"']\ar[dr,phantom,"\scriptstyle\stackrel{c_{m,\comlt}}{\cong}"] && AA\ar[d,"\mlt"]\ar[dl,"1\comlt"description] \\
 AAAA\ar[d,"1\mlt1"']\ar[r,"\mlt11"] & AAA\ar[dr,"\mlt1"description]\ar[ld,phantom,"\scriptstyle\stackrel{\alpha1}{\cong}"]\ar[r,phantom,"\scriptstyle\stackrel{\psi}{\cong}"] & A\ar[d,"\comlt"] \\
AAA\ar[rr,"\mlt1"'] && AA 
\end{tikzcd}\quad
\begin{tikzcd}[column sep=.2in,row sep=.4in]
A\ar[rr,"\uni1"]\ar[d,"\comlt"']\ar[dr,phantom,"\scriptstyle\stackrel{c_{\uni,\comlt}}{\cong}"] && AA\ar[d,"\mlt"]\ar[dl,"1\comlt"description]  \\
AA\ar[r,"\uni11"]\ar[drr,bend right,"\id"'] & AAA\ar[dr,"\mlt1"description]\ar[r,phantom,"\scriptstyle\stackrel{\psi}{\cong}"] & A\ar[d,"\comlt"] \\ 
\phantom{A}\ar[ur,phantom,near end,"\scriptstyle\stackrel{\ell1}{\cong}"] && AA
\end{tikzcd}{=}
\begin{tikzcd}[column sep=.2in,row sep=.4in]
A\ar[r,"\uni1"]\ar[dr,phantom,"\scriptstyle\stackrel{\ell}{\cong}"]\ar[d,"\comlt"']\ar[dr,bend right,"\id"description] & AA\ar[d,"\mlt"] \\
AA\ar[dr,"\id"'] & A\ar[d,"\comlt"] \\
& AA
\end{tikzcd}
\end{equation}

\begin{equation}\label{eq:leftpscomod12}
\begin{tikzcd}[column sep=.2in,row sep=.4in]
 AA\ar[rr,"\mlt"]\ar[d,"\comlt1"']\ar[dr,phantom,"\scriptstyle\stackrel{\phi}{\cong}"] && A\ar[d,"\comlt"]\ar[dl,"\comlt"description] \\
 AAA\ar[d,"1\comlt1"']\ar[r,"1\mlt"] & AA\ar[dr,"1\comlt"description]\ar[ld,phantom,"\scriptstyle\stackrel{1\phi}{\cong}"]\ar[r,phantom,"\scriptstyle\stackrel{\beta}{\cong}"] & AA\ar[d,"\comlt1"] \\
AAAA\ar[rr,"11\mlt"'] && AAA 
\end{tikzcd}{=}
\begin{tikzcd}[column sep=.2in,row sep=.4in]
 AA\ar[rr,"\mlt"]\ar[d,"\comlt1"']\ar[dr,"\comlt1"description] && AA\ar[d,"\comlt"] \\
 AAA\ar[d,"1\comlt1"'] & AAA\ar[r,"1\mlt"]\ar[l,phantom,"\scriptstyle\stackrel{\beta1}{\cong}"]\ar[dr,phantom,"\scriptstyle\stackrel{c_{\comlt,\mlt}}{\cong}"]\ar[ur,phantom,"\scriptstyle\stackrel{\phi}{\cong}"]\ar[dl,"\comlt11"description] & AA\ar[d,"\comlt1"] \\
AAAA\ar[rr,"11\mlt"'] && AAA 
\end{tikzcd}
\quad
\begin{tikzcd}[column sep=.2in,row sep=.4in]
AA\ar[rr,"\comlt1"]\ar[d,"\mlt"']\ar[dr,phantom,"\scriptstyle\stackrel{\phi}{\cong}"] && AAA\ar[d,"\couni11"]\ar[dl,"1\mlt"description]  \\
A\ar[r,"\comlt"]\ar[drr,bend right,"\id"'] & AA\ar[dr,"\couni1"description]\ar[r,phantom,"\scriptstyle\stackrel{c_{\couni,\mlt}}{\cong}"] & AA\ar[d,"\mlt"] \\ 
\phantom{A}\ar[ur,phantom,near end,"\scriptstyle\stackrel{t}{\cong}"] && A
\end{tikzcd}{=}
\begin{tikzcd}[column sep=.2in,row sep=.4in]
AA\ar[r,"\comlt1"]\ar[dr,phantom,"\scriptstyle\stackrel{t1}{\cong}"]\ar[d,"\mlt"']\ar[dr,bend right,"\id"description] & AAA\ar[d,"\couni11"] \\
A\ar[dr,"\id"'] & AA\ar[d,"\mlt"] \\
& A
\end{tikzcd}
\end{equation}

\begin{equation}\label{eq:rightpscomod12}
\begin{tikzcd}[column sep=.2in,row sep=.4in]
 AA\ar[rr,"\mlt"]\ar[d,"1\comlt"']\ar[dr,phantom,"\scriptstyle\stackrel{\psi}{\cong}"] && A\ar[d,"\comlt"]\ar[dl,"\comlt"description] \\
 AAA\ar[d,"1\comlt1"']\ar[r,"\mlt1"] & AA\ar[dr,"\comlt1"description]\ar[ld,phantom,"\scriptstyle\stackrel{\psi1}{\cong}"]\ar[r,phantom,"\scriptstyle\stackrel{\beta}{\cong}"] & AA\ar[d,"1\comlt"] \\
AAAA\ar[rr,"\mlt11"'] && AAA 
\end{tikzcd}{=}
\begin{tikzcd}[column sep=.2in,row sep=.4in]
 AA\ar[rr,"\mlt"]\ar[d,"1\comlt"']\ar[dr,"1\comlt"description] && AA\ar[d,"\comlt"] \\
 AAA\ar[d,"1\comlt1"'] & AAA\ar[r,"\mlt1"]\ar[l,phantom,"\scriptstyle\stackrel{1\beta}{\cong}"]\ar[dr,phantom,"\scriptstyle\stackrel{c_{\mlt,\comlt}}{\cong}"]\ar[ur,phantom,"\scriptstyle\stackrel{\psi}{\cong}"]\ar[dl,"11\comlt"description] & AA\ar[d,"1\comlt"] \\
AAAA\ar[rr,"\mlt11"'] && AAA 
\end{tikzcd}
\quad
\begin{tikzcd}[column sep=.2in,row sep=.4in]
AA\ar[rr,"1\comlt"]\ar[d,"\mlt"']\ar[dr,phantom,"\scriptstyle\stackrel{\psi}{\cong}"] && AAA\ar[d,"11\couni"]\ar[dl,"\mlt1"description]  \\
A\ar[r,"\comlt"]\ar[drr,bend right,"\id"'] & AA\ar[dr,"1\couni"description]\ar[r,phantom,"\scriptstyle\stackrel{c_{\mlt,\couni}}{\cong}"] & AA\ar[d,"\mlt"] \\ 
\phantom{A}\ar[ur,phantom,near end,"\scriptstyle\stackrel{s}{\cong}"] && A
\end{tikzcd}{=}
\begin{tikzcd}[column sep=.2in,row sep=.4in]
AA\ar[r,"1\comlt"]\ar[dr,phantom,"\scriptstyle\stackrel{1s}{\cong}"]\ar[d,"\mlt"']\ar[dr,bend right,"\id"description] & AAA\ar[d,"11\couni"] \\
A\ar[dr,"\id"'] & AA\ar[d,"\mlt"] \\
& A
\end{tikzcd}
\end{equation}

\subsection*{Acknowledgements}
{JV} would like to thank the {\em F\'ed\'eration Wallonie-Bruxelles} for supporting the ARC grant ``Hopf algebras and the symetries of non-commutative spaces'' and the {\em FNRS} for the MIS grant ``Antipode''.
{CV} would like to thank the {\em Universit\'e Libre de Bruxelles} for a postdoctoral position under the above ARC grant, as well as the General Secretariat for Research and Technology (GSRT) and the Hellenic Foundation for Research and Innovation (HFRI).
{MB} would like to thank the {\em Universit\'e Libre de Bruxelles} for support through a ULB Individual Fellowship.
All authors want to thank Gabriella B\"ohm, Nick Gurski and Steve Lack for inspiring discussions, as well as the anonymous reviewer who provided invaluable insight and comments, resulting in a solid framework without inconsistencies.
\printbibliography

\end{document}